\newtheorem{theorem}{Theorem}[section]
\newtheorem{lemma}[theorem]{Lemma}
\newtheorem{proposition}[theorem]{Proposition}
\newtheorem{corollary}[theorem]{Corollary}
\newtheorem{conjecture}[theorem]{Conjecture}
\theoremstyle{remark}
\newtheorem{remark}[theorem]{Remark}
\newtheorem{definition}[theorem]{Definition}
\newtheorem{notation}[theorem]{Notation}
\numberwithin{equation}{subsection}
\def\imod#1{\allowbreak\mkern10mu({\operator@font mod}\,\,#1)}
\newcommand{\on}{\operatorname}
\newcommand{\cY}{\mathcal{Y}}
\newcommand{\cC}{\mathcal{C}}
\newcommand{\cX}{\mathcal{X}}
\newcommand{\cH}{\mathcal{H}}
\newcommand{\cL}{\mathcal{L}}
\newcommand{\cF}{\mathcal{F}}
\newcommand{\cD}{\mathcal{D}}
\newcommand{\cZ}{\mathcal{Z}}
\newcommand{\CC}{\mathbb{C}}
\newcommand{\ZZ}{\mathbb{Z}}
\newcommand{\PP}{\mathbb{P}}
\newcommand{\QQ}{\mathbb{Q}}
\newcommand{\UU}{\mathbb{U}}
\newcommand{\VV}{\mathbb{V}}
\newcommand{\sL}{\mathscr{L}}
\newcommand{\sO}{\mathscr{O}}
\newcommand{\sV}{\mathscr{V}}
\newcommand{\sW}{\mathscr{W}}
\newcommand{\sMbar}{\overline{\mathscr{M}}}
\newcommand{\sAbar}{\overline{\mathscr{A}}}
\newcommand{\bq}{\mathbf{q}}
 \newcommand{\bp}{\mathbf{p}}
  \newcommand{\bo}{\mathbf{0}}
\newcommand{\bt}{\mathbf{t}}
\newcommand{\bs}{\mathbf{s}}
\newcommand{\ii}{\mathbb{1}}
\newcommand{\jj}{\mathfrak{j}}
\newcommand{\mm}{\mathtt{m}}
\newcommand{\h}{h}
\newcommand{\g}{g}
\newcommand{\bv}[1]{\mathbf{#1}}
\newcommand{\mlk}{MLK }
\DeclareMathOperator{\Gr}{Gr}
\DeclareMathOperator{\age}{age}
\DeclareMathOperator{\ch}{ch}
\DeclareMathOperator{\Res}{Res}
\newcommand{\Sing}{\mathit{Sing}}
\newcommand{\br}[1]{\left\langle#1\right\rangle}  
\newcommand{\set}[1]{\left\{#1\right\}}  
\def\Ddots{\mathinner{\mkern1mu\raise\p@
\vbox{\kern7\p@\hbox{.}}\mkern2mu
\raise4\p@\hbox{.}\mkern2mu\raise7\p@\hbox{.}\mkern1mu}}
\author{Y.-P.~Lee}
\address{Y.-P.~Lee, Department of Mathematics, University of Utah,
Salt Lake City, Utah 84112-0090, U.S.A.}
\email{yplee@math.utah.edu}
\author{N.~Priddis}
\address{N.~Priddis, Institut of Algebraic Geometry, 
Gottfried Wilhelm Leibniz Universit\"at Hannover, 
Welfengarten 1, 30167 Hannover, Germany}
\email{priddis@math.uni-hannover.de}
\author{M.~Shoemaker}
\address{M.~Shoemaker, Department of Mathematics, University of Utah,
Salt Lake City, Utah 84112-0090, U.S.A.}
\email{markshoe@math.utah.edu}
\subjclass{2010 Mathematics Subject Classification: 14N35, 14E30}
\title[CTC $\Rightarrow$ LG/CY]{A proof of \\ the Landau--Ginzburg/Calabi--Yau correspondence \\ via the crepant transformation conjecture}
\begin{document}

\begin{abstract}
We establish a new relationship (the \mlk correspondence)
between twisted FJRW theory and local Gromov--Witten theory in all genera.
As a consequence, we show that the Landau--Ginzburg/Calabi--Yau correspondence
is implied by the crepant transformation conjecture for Fermat type 
in genus zero.
We use this to then prove the  Landau--Ginzburg/Calabi--Yau correspondence for Fermat
type, generalizing the results of A.~Chiodo and Y.~Ruan in \cite{ChR}.
\end{abstract}

\maketitle

{\small
\tableofcontents}

\section{Introduction}
The crepant transformation conjecture describes a relationship between
the Gromov--Witten theories of $K$-equivalent varieties in terms of analytic continuation
and symplectic transformation.  A more recent conjecture, the Landau--Ginzburg/Calabi--Yau (LG/CY)
correspondence, proposes a similar relationship between the Gromov--Witten theory of
a Calabi--Yau variety and the FJRW theory of a singularity.  The primary goal of this paper is 
to relate these two conjectures.

FJRW theory was constructed by 
Fan, Jarvis and Ruan (\cite{FJR1}) 
as a  ``Landau--Ginzburg (LG) $A$ model''
to verify a conjecture of Witten \cite{W1}.  
The construction gives a cohomological field theory
defined by a virtual class on a cover of the moduli space of curves.
It may be viewed as an analogue of Gromov--Witten theory, yielding invariants of
 a singularity rather than a smooth variety.
Roughly, the input of the theory is an \emph{LG pair} $(Q, G)$ where $Q$ is a 
quasi-homogeneous polynomial $Q: \CC^N \to \CC$, 
and $G$ an \emph{admissible group} of diagonal automorphisms of $Q$
(See Section~\ref{ss:Wstr}).
The moduli space is defined to be $N$-tuples of line bundles on curves,
$\cL_i \to \cC$, such that $Q_s(\cL_1, ..., \cL_N) \cong \omega_{\cC, \log}$, 
where $Q_s$ denotes a monomial of $Q$ and $\omega_{\cC, \log}$ is
the log-canonical bundle.
The most difficult part of the construction is to define the
virtual classes. This was done in the analytic category by Fan--Jarvis--Ruan 
in \cite{FJR1} and in the algebraic category by Polishchuk--Vaintrob 
in \cite{PV}.

The first proof of the LG/CY correspondence was given by Chiodo and Ruan for the quintic threefold (\cite{ChR}).
When $Q$ is the Fermat quintic in five variables and $G = \on{diag}(\mu_5)$, they
 proved  that the 
genus zero FJRW theory of $(Q, G)$
is equivalent to genus zero Gromov--Witten theory of the quintic hypersurface $Z(Q) = \{Q = 0\}$ in $\PP^4$.   
The identification of the two theories is given by analytic continuation
and 
symplectic transformation by an element Givental's symplectic loop group (\cite{G4, YPnotes}).


The LG/CY correspondence has now been proven in genus zero in
 all cases where the Calabi--Yau is a hypersurface in projective space 
 (\cite{CIR}) as well as for the mirror quintic (\cite{PS}).
There are two aspects of
previous proofs however which, in our opinion, warrant further investigation.
%
First, the proofs of the LG/CY correspondence to-date have been
computational in nature, and do not explain the source behind this correspondence.
%
%
%
Second, due to the existence of stabilizers in the action
of the symplectic loop group, relating the genus zero FJRW and GW theory in this 
correspondence requires one to make a choice of
symplectic transformation.
Crucially, two symplectic transformations which have the 
same effects on the genus zero theory might have quantizations which 
act differently on 
higher genus theories.  Therefore, any correspondence in higher genus
requires a canonical way of choosing the symplectic transformation relating the genus zero invariants.
%
%

The goal of the present paper is to help elucidate the questions raised above by 
proposing a more conceptual framework for the LG/CY correspondence.  Namely we 
relate it to the older and better understood crepant transformation conjecture.


We start with the observation that in the moduli problem
for FJRW theory, one may replace the log-canonical
bundle with any power of the log-canonical bundle.  This yields an isomorphic
moduli space, and for any given power of the log-canonical bundle
one may construct a corresponding cohomological field theory.
%
If this power is the zeroth power, i.e., the trivial line bundle,
then one recovers the orbifold GW theory for the abelian quotient
stack $[\mathbb{C}^N/{G}]$.

As a first step, we restrict ourselves to LG pairs $(Q, G)$
where $Q$ is a quasi-homogeneous polynomial of Fermat type
i.e., $Q =\sum_{i=1}^N x_i^{d/c_i}$. 
In this case
we prove a new correspondence
 (dubbed the ``multiple log-canonical'' or \mlk correspondence, see Section~\ref{s:5.2.3}) 
 which describes the genus zero FJRW theory
of $(Q,G)$ in terms of the genus zero orbifold GW theory of $[\mathbb{C}^N/{G}]$.

Via the \mlk correspondence, we prove that the LG/CY correspondence
can be deduced from the crepant transformation conjecture (CTC).  Symbolically we 
write:
\[
 \text{CTC $\Rightarrow$ LG/CY.}
\] 

More precisely, let $\PP(G) := [\PP(c_1, ...,c_N)/\bar{G}]$, where
$\bar{G}$ is the quotient of $G$ by those elements acting trivially on $[\PP(c_1, ...,c_N)]$.
Let $K_{\PP(G)}$ denote the total space of the canonical bundle over $\PP(G)$ and let
$Z(Q) \subset \PP(G)$ be the Calabi--Yau orbifold defined by $Q$.
Then the LG/CY correspondence may be established by a special
case of the CTC.
The relationship is summarized in the following diagram.
\[
\begin{tikzcd}
 GW_0 (K_{P(G)}) \arrow[rightarrow]{r}{\text{QSD}} \arrow[leftrightarrow]{d}{\text{CTC}} &GW_0 (Z(Q)) \arrow[leftrightarrow]{d}{\text{LG/CY}} \\
 GW_0 ([\mathbb{C}^N /G]) \arrow[rightarrow]{r}{\text{\mlk}} &\text{FJRW}_0(Q, G)
\end{tikzcd}
\]
In the upper right corner is the genus zero GW theory for the Calabi--Yau orbifold
Z(Q).
The lower right corner is the genus zero FJRW theory associated to the LG pair $(Q, G)$.
The right vertical arrow is the LG/CY correspondence discussed above.
The left vertical arrow is the CTC relating the genus zero orbifold GW theory
of $[\mathbb{C}^N/G]$ with the genus zero GW theory of its crepant partial resolution $K_{\PP(G)}$.
The upper horizontal arrow is quantum Serre duality (\cite{CG}), which relates
the GW theory of the total space of a line bundle with the GW theory
of the hypersurface defined by a section of the line bundle.
The \mlk correspondence, established in Section~\ref{s:5.2.3}, completes the square.

The upshot of this approach is that both the QSD and \mlk correspondences
take a relatively simple form.  Thus the complicated form of the LG/CY correspondence (e.g., analytic continuation, symplectic transformation) 
may be understood directly from the 
crepant transformation conjecture.  In particular, in the statement of the crepant transformation conjecture (\cite{CIT}),
the form of the symplectic transformation is subject to several constrains, thereby limiting
the choices which can be made, and partially addressing the non-canonical nature of the
symplectic transformation in previous proofs of the LG/CY correspondence.

\subsection{Contents of the paper}
In Section~\ref{s:1} we give a general construction of a cohomological field theory defined 
as a twisted theory over a generalization of the moduli of $r$-spin curves.
In Section~\ref{s:rtoi} we show how in special cases of the above construction
one recovers
the Gromov--Witten theory of local affine quotients as well as the genus zero
FJRW theory of Fermat LG pairs.  
Section~\ref{ss:sympform} gives a brief summary of Givental's symplectic formalism
which we use in Section~\ref{s:correspondences} to compute the
cohomological field theories introduced earlier.
In Section~\ref{s:cc2} we are able to state and prove the 
\mlk correspondence, which relates 
the genus zero Gromov--Witten theory of affine quotients to the
FJRW theory of Fermat LG pairs.
We then apply this correspondence in Section~\ref{s:ctclgcy} to show that the 
crepant transformation conjecture implies the LG/CY correspondence in 
a large class of cases.
Finally in Section~\ref{s:app} we prove a version of the LG/CY correspondence for 
the cases of interest to us.

%

\subsection{Acknowledgments}
The authors would like to thank T. Coates, H. Iritani, and Y. Jiang for useful conversations and for
providing them with an early copy of their paper ``The crepant transformation conjecture for toric complete intersections'' (\cite{CIJ}).  They are also grateful to Y. Ruan for 
helping explain FJRW theory and for providing much of the initial motivation for this project. 
Y.-P.~L.\ was partially supported by the NSF.
N.~P.\ was partially supported by the NSF grant RTG 1045119.  
M.~S.\ was partially supported by NSF RTG Grant DMS-1246989.

\section{Twisted invariants}\label{s:1}

\subsection{$W$ structures}\label{ss:Wstr}

\begin{definition}[{\cite[Definition~A.1]{ChR}}]
Let $d$ be a non-negative integer.
A \emph{$d$-stable $n$-pointed genus $h$ curve} is an $n$-pointed stable 
orbi-curve 
such that all marked points and nodes have cyclic stabilizers of order $d$
and no other non-trivial stabilizers.
\end{definition}

\begin{notation}
Let $\sMbar_{\h, n}^{d}$ denote the moduli space of $d$-stable $n$-pointed genus 
$h$ curves. 
A $d$-stable curve (or a family of such) 
will always be denoted by $\cC$.  
Let $\sAbar_{\h, n}^{(d, c)}$ denote the moduli space of $d$-th roots of 
$\omega_{\cC/\sMbar, \log}^{\otimes c}$.  
\end{notation}


Let $Q: \CC^N \to \CC$ be a nondegenerate quasi-homogeneous polynomial, 
i.e., for $\alpha \in \CC^*$,
\[
 Q( \alpha^{c_1} x_1, \ldots, \alpha^{c_N}x_N) = \alpha^d Q(x_1, \ldots, x_N),
\] 
where the $c_j$'s are positive integers.  We assume always that
$\gcd(c_1, \ldots, c_N) = 1$.
$Q$ is said to have degree $d$ with integer weights $c_1, \ldots, c_N$.

Let 
$$ G_Q = G^{\on{max}}_Q :=  (\CC^*)^N \cap \operatorname{Aut}(Q)$$ 
denote the (maximal) group of diagonal automorphisms of $Q$.  
We define a distinguished element $\jj \in G_Q$, the \emph{grading} element,
by
\[ 
 \jj := \big(\exp\big(2 \pi i \frac{c_1}{d}\big), \ldots, \exp\big(2 \pi i \frac{c_N}{d}\big)\big).
\]  
Let $\bar d$ denote the \emph{period} of $G_Q$, defined as 
$$\bar{d} := \max\left\{|g|\, \big| g \in G \right\},$$ 
and let $\bar c_j = c_j \bar d/d$.  Then we may write 
$$\jj = \big(\exp\big(2 \pi i \frac{\bar c_1}{\bar d}\big), \ldots, 
  \exp\big(2 \pi i \frac{\bar c_N}{\bar d}\big)\big),$$ 
which will be convenient since we will work on $\bar d$-stable curves.

\begin{definition}
On a marked $\bar d$-stable curve $\cC$, a \emph{$W^c$ structure} is the data of $N$ $\bar d$-th roots of the log canonical bundle
\[
  (\cL_j, \phi_j: \cL_j^{\otimes \bar d} 
  \stackrel{\cong}{\to} \omega_{\cC, \log}^{\otimes (c \cdot \bar c_j)})
\]
which satisfy
\begin{equation}  \label{e:cond1}
  Q_s (\cL_1, \ldots, \cL_N) \cong \omega_{\cC, \log}^{\otimes c}
\end{equation}
for each monomial $Q_s$ in $Q$.
\end{definition}

\begin{remark} The ``$W$'' in $W^c$ structures stands for E.~Witten, 
whose ideas initiated the study of such moduli (\cite{W1}).
\end{remark}

\begin{definition}
The \emph{moduli space $W^c_{\h,n}$ of $W^c$ structures} of $Q$ is the open and 
closed substack of the fiber product 
\[
 \sAbar_{\h, n}^{(\bar d, c \cdot \bar c_1)} \times_{\sMbar_{\h, n}^{\bar d}}
 \cdots \times_{\sMbar_{\h, n}^{\bar d}} \sAbar_{\h, n}^{( \bar d, c \cdot \bar c_N)}
\] 
consisting of those $N$-tuples 
$$(\cL_j, \phi_j: \cL_j^{\otimes \bar d} \stackrel{\cong}{\to} 
 \omega_{\cC, \log}^{\otimes (c \cdot \bar{c_j})} )$$ 
which satisfy \eqref{e:cond1} for all monomials $Q_s$ in $Q$.
\end{definition}

We now add the information of a group of automorphisms into the 
definition of our moduli space.
A group $G \leq G_Q$ is \emph{admissible} if $\jj\in G$. (See \cite[Definition~2.3.2 and Proposition~2.3.5]{FJR1} for an alternative equivalent definition.)
\begin{definition}
A \emph{(gauged) Landau--Ginzburg (LG) pair} is a pair $(Q, G)$ where $Q$ is a nondegenerate quasi-homogeneous polynomial and $G$ is an admissible subgroup of $G_Q$.
\end{definition}

\begin{notation}
Given $g \in G$, let $m_j(g)$ denote the \emph{multiplicity} of $g$ on the 
$j$th factor of $\CC^N$.  
In other words, $g$ acts on $\CC^N$ via $G \subset (\CC^*)^N$ by
$$ \left(\exp(2 \pi i m_1(g)), \ldots, \exp(2 \pi i m_N(g)) \right)$$
such that $0 \leq m_j(g) < 1$.
Let $N_g :=\dim (\CC^N)^g = \#\{j| m_j(\g) = 0\}$.

\end{notation}

Let $Q '$ denote a degree $d$ Laurent polynomial with different monomials than $Q$ such that the group $G_{Q + Q'}$ of diagonal automorphisms of $Q+Q '$ is exactly $G$.  

\begin{definition}\label{d:Wstr} 
The \emph{moduli space $W_{\h, n, G}^c$ of $W^c$ structures of $(Q,G)$} is defined to be the moduli space of $W^c$ structures of $Q + Q'$, where $Q'$ is as above.
\end{definition}  

It is easy to show such $Q'$ exists and that the above definition does not depend on a choice of $Q'$. 

As a consequence of the definition of $W^c_{\h, n, G}$, at each marked point $p_i$, the isotropy acts on fibers of $\oplus_{j=1}^N \cL_j$ by an element of $G$.  
One may therefore break $W^c_{\h, n, G}$ into open and closed substacks based on the action of the corresponding isotropy group.  Let 
\[  W^c_{\h, n, G}(g_1, \ldots, g_n)  \] 
denote the substack where the isotropy at $p_i$ acts by $g_i$.  
The following fact will be used later.

\begin{lemma}[\cite{FJR1}] 
Assume $n > 0$, the stack $W^c_{\h, n, G}$ splits into a disjoint union of 
open and closed substacks
\[
  W^c_{\h, n, G}= \coprod_{g_1, \ldots, g_n \in G} W^c_{\h, n, G}(g_1, \ldots, g_n).
\]  
Furthermore $W^c_{\h, n, G}(g_1, \ldots, g_n)$ is nonempty if and only if 

\begin{equation}\label{e:numcon}
\frac{c c_j}{d}(2h - 2 + n) - \sum_{i=1}^n m_j(g_i) \in  \ZZ \hspace{1 cm} 1 \leq j \leq N
\end{equation}
\end{lemma}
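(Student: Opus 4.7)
The plan is to handle the decomposition and the nonemptiness criterion in turn.

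For the decomposition, observe that at each marked point $p_i$ of any $W^c$-structure, the local $\mu_{\bar d}$-stabilizer acts diagonally on $\bigoplus_j (\cL_j)_{p_i}$, producing a tuple in $(\CC^*)^N$ whose $j$-th component has order dividing $\bar d$. The monomial isomorphisms \eqref{e:cond1} for $Q+Q'$ force this tuple to act trivially on every monomial of $Q+Q'$, hence to lie in $G_{Q+Q'}=G$. Since this datum takes values in the finite set $G^n$, it is locally constant in families over $W^c_{\h,n,G}$, and the resulting stratification by isotropy type is open and closed, giving the claimed disjoint union.

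For the necessity of \eqref{e:numcon}, assume a $W^c$-structure with isotropy $g_i$ at $p_i$ exists on a curve $\cC$ with coarse-moduli map $\pi\colon\cC\to C$. The isomorphism $\cL_j^{\otimes\bar d}\cong\omega_{\cC,\log}^{\otimes c\bar c_j}$ fixes the orbifold degree of $\cL_j$ at $\frac{cc_j}{d}(2h-2+n)$, while its isotropy weight at $p_i$ equals $m_j(g_i)$. The standard identity $\deg_C(\pi_*\cL_j)=\deg_\cC(\cL_j)-\sum_i m_j(g_i)$ exhibits the expression in \eqref{e:numcon} as the degree of a line bundle on $C$, and hence as an integer.

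For sufficiency, assume \eqref{e:numcon} and fix a $\bar d$-stable curve $\cC$ whose marked-point stabilizers act with the specified types (such curves exist and are dense in $\sMbar_{\h,n}^{\bar d}$). First, for each $j$ individually, I would construct an orbifold line bundle $\cL_j$ with the prescribed isotropies and with an isomorphism $\phi_j\colon\cL_j^{\otimes\bar d}\stackrel{\cong}{\to}\omega_{\cC,\log}^{\otimes c\bar c_j}$: the numerical criterion ensures that the coarse degrees of the two sides of $\phi_j$ agree, and any residual discrepancy in $\on{Pic}^0(C)$ can be absorbed by tensoring $\cL_j$ with a suitable $\bar d$-torsion line bundle, using that the Jacobian is $\bar d$-divisible. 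It remains to arrange the monomial conditions \eqref{e:cond1} simultaneously. Taking $\bar d$-th powers of a monomial relation for $Q_s=\prod_j x_j^{a_{s,j}}$ and using the quasihomogeneity $\sum_j c_ja_{s,j}=d$ together with $\bar c_j=c_j\bar d/d$ shows that each monomial discrepancy $Q_s(\cL_1,\ldots,\cL_N)\otimes\omega_{\cC,\log}^{-\otimes c}$ lies in $\on{Pic}(C)[\bar d]$. The main obstacle is thus the simultaneous trivialization of these finitely many $\bar d$-torsion obstructions under twists $\cL_j\mapsto\cL_j\otimes T_j$ with $T_j\in\on{Pic}(C)[\bar d]$; this amounts to a linear system over $\ZZ/\bar d$, whose solvability follows from the characterization $G=G_{Q+Q'}$, since the exponent lattice of the monomials of $Q+Q'$ is by construction the annihilator of $G$ in $\ZZ^N$, and hence the map from the $N$-fold product $\on{Pic}(C)[\bar d]^{\oplus N}$ to the obstruction space surjects as required.
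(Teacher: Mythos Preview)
Your handling of the decomposition and of the necessity direction coincides with the paper's: the paper says the first is ``easy to see'' and records the same coarse-degree formula $\deg(|\cL_j|)=\frac{cc_j}{d}(2h-2+n)-\sum_i m_j(g_i)$ that you derive. For sufficiency the paper simply cites \cite[Proposition~2.2.8]{FJR1}, whereas you supply a direct construction; the two-step scheme (build each $\cL_j$ separately, then fix the monomial relations by torsion twists) is essentially what one finds on unpacking the cited reference, so there is no genuine divergence in approach.

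Two points in your sufficiency sketch deserve tightening. First, to absorb a discrepancy $E\in\on{Pic}^0(C)$ between $\cL_j^{\otimes\bar d}$ and $\omega_{\cC,\log}^{\otimes c\bar c_j}$ you must tensor $\cL_j$ by a $\bar d$-th \emph{root} of $E^{-1}$, not by a $\bar d$-torsion bundle: tensoring by torsion leaves $\cL_j^{\otimes\bar d}$ unchanged. Your invocation of $\bar d$-divisibility of the Jacobian is the correct ingredient, so this reads as a slip of phrasing. Second, the solvability of the final linear system does not follow from the characterization $G=G_{Q+Q'}$ in the way you state: that identity describes the annihilator inside $(\ZZ/\bar d)^N$, whereas the obstruction tuple $(D_s)$ lives on the other side, in $\on{Pic}(C)[\bar d]^M$. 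What is actually needed is that $(D_s)$ lies in the image of the exponent matrix $A$, and this follows directly from $\cL_j^{\otimes\bar d}\cong\omega_{\log}^{\otimes c\bar c_j}$ together with quasihomogeneity: whenever $\sum_s n_s a_{s,j}\equiv 0\pmod{\bar d}$ for all $j$ one checks $\prod_s D_s^{n_s}\cong\cO$, so $(D_s)$ is orthogonal to $\ker A^T$ and hence lies in $\on{im}A$. The group-theoretic description of $G$ is not what drives this step.
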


The first statement is easy to see. 
The second statement is essentially proven in \cite[Proposition~2.2.8]{FJR1}.
The numerical condition \eqref{e:numcon} is established using the observation that
the corresponding line bundles $| \cL_j |$ on the coarse moduli have integral degrees, plus 
the calculation 
\begin{equation}\label{e:degree}
 \deg (| \cL_j |) = \frac{c c_j}{d} (2h - 2 + n) - \sum_{i=1}^n m_j(g_i).
\end{equation}

\subsection{``Untwisted'' theories}

There is a map 
$$W^c_{\h, n, G} \to \sMbar_{\h, n}$$ 
obtained by forgetting the line bundles $\cL_j$ as well as the orbifold structure of the underlying curve.  By pulling back $\psi$-classes from $\sMbar_{\h, n}$ we obtain tautological classes on $W^c_{\h, n, G}$.  We can integrate these classes over the moduli space to obtain invariants.

Given a $W^c$ structure of $(Q,G)$, we introduce the \emph{$W^c$ state space} 
as a vector space formally generated by basis vectors $\phi^c_{g}$ for each $g \in G$,
\[
  H^c := \oplus_{g \in G} \CC  \phi^c_{g} .
\]
Define the \emph{untwisted $W^c$ invariant}
\begin{equation} \label{e:1.0.3} 
  \br{ \psi^{a_1} \phi^c_{ g_1}, \ldots, \psi^{a_n} \phi^c_{  g_n} }^c_{\h,n} :=
  \int_{W^c_{\h, n, G}(g_1 \jj^{c},  \ldots, g_n\jj^{c})} \prod_{i = 1}^n \psi_i^{a_i}.
\end{equation}
Note the shifting by $\jj^{c}$ in this definition.
There is a pairing given by
\[ 
 \langle \phi^c_{ g_1}, \phi^c_{ g_2} \rangle^c := \br{ \phi^c_{ g_1}, \phi^c_{ g_2}, \phi^c_{ e}}^c_{0, 3}
\]
where $e$ is the identity element in $G$.

Although the definition of $H^c$ looks somewhat contrived, the corresponding invariants should not.
As it stands, $H^c$ should be viewed as giving ``place-holders'' for the various connected components of the moduli space.  The geometric meaning 
will be clear after we establish the relationship to Gromov--Witten and FJRW theory.

\subsection{Twisted theories}

Let $\CC^*$ act on a $W^c$ structure by acting
on each line bundle.  This induces an action on $W^c_{\h, n , G}$. 
\begin{notation} Let $\lambda$ denote the equivariant parameter, and let $-\lambda_j$ denote the character of the action on the $j$th bundle (i.e. $\lambda_j$ is a multiple of $\lambda$).  We assume always that each character is nontrivial.
\end{notation}

We may express an invertible multiplicative characteristic class as 
\[ 
  \bs: \oplus \cL_j \mapsto \exp\left( \sum_{j = 1}^N \sum_{k \ge 0} s^j_k \ch_k(\cL_j)\right),
\]
where 
$$\exp(s^j_0),\; s^j_k \in \CC[\lambda, \lambda^{-1}] \:\: \text{for} \:\: 1 \leq j \leq N, \:k > 0.$$
We define the \emph{$\bs$-twisted virtual class} on $W^c_{\h, n, G}$ as the class 
$$ [W^c_{\h, n, G}]^{\bs} := 
 \bs(R \pi_* \oplus_{j=1}^N \cL_{j})\cap [W^c_{\h, n, G}]$$
%
%
and the \emph{twisted invariants}
\begin{align}\label{e:twistedinvariant} 
 \br{ \psi^{a_1} \phi^c_{ g_1}, \ldots, \psi^{a_n} \phi^c_{  g_n} }^{c,\bs}_{\h,n} 
 := \int_{W^c_{\h, n, G}(g_1\jj^{c},  \ldots, g_n\jj^{c}) } 
  \bs ( R \pi_* \oplus_{j=1}^N \cL_{j}) \prod_{i = 1}^n \psi_i^{a_i} .
\end{align}
We note that the shifting by $\jj^c$ is consistent with the definition of the untwisted
invariants in \eqref{e:1.0.3}.

There is an $\bs$-twisted pairing given by
\begin{equation} \label{e:1.1.4}
 \begin{split} 
 \langle \phi^c_{ g_1}, \phi^c_{ g_2} \rangle^{c, \bs}  := &\br{ \phi^c_{ g_1}, 
   \phi^c_{ g_2}, \phi^c_{ e}}^{c, \bs}_{0, 3} \\
  = &\exp \left(\sum_{j = 1}^N \chi \big(R \pi_*( \cL_{j}) \big)s^j_0 \right)
   \delta_{g_1 g_2 = \jj^{-2c}}/{\bar d}^N
 \end{split}
\end{equation} 
defined on $H^c[ \lambda, \lambda^{-1}]$.
The last equality follows easily from the definition and the fact that $\sMbar_{0,3}$ is a point.  This definition of the pairing is chosen to give a \emph{Frobenius algebra} structure on $H^c$.

\begin{lemma} \label{l:1.10} \begin{enumerate}
\item \label{i:sis0}
When $s^j_k = 0$ for all $j$ and $k$ we recover the untwisted $W^c$ invariants.  
In this case the pairing is simply 
$$\langle \phi^c_{ g_1\jj^{-c}}, \phi^c_{ g_2\jj^{-c}} \rangle^{c, 0} 
  = \frac{\delta_{g_1 = g_2^{-1}}}{{\bar d}^N}.$$
\item 
More generally,
\[
\langle \phi^c_{ g_1\jj^{-c}}, \phi^c_{ g_2\jj^{-c}} \rangle^{c, \bs} 
  =\exp \left( \sum_{j=1}^N \Big( \left\lfloor 1 - m_j(g_1) \right\rfloor
  +  \left\lfloor \tfrac{c c_j}{d} \right\rfloor \Big)s^j_0 \right)
  \frac{\delta_{g_1 = g_2^{-1}}}{{\bar d}^N}. 
\]
In particular, when $c c_j < d$, 
\begin{equation}\label{e:pairing} 
 \langle \phi^c_{ g_1\jj^{-c}}, \phi^c_{ g_2\jj^{-c}} \rangle^{c, \bs} 
  =\exp\Big(\sum_{j=1}^N \big( \left\lfloor 1 - m_j(g_1) \right\rfloor s^j_0 \big)\Big)\frac{\delta_{g_1 = g_2^{-1}}}{{\bar d}^N}.
\end{equation} 
The condition $c c_j < d$ holds in particular for the cases $c = 0$ or $1$.
\end{enumerate}
\end{lemma}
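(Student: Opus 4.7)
The plan is to reduce the pairing computation to evaluating the virtual rank $\chi(R\pi_*\cL_j)$, and then to invoke orbifold Riemann--Roch in genus zero.

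First I would observe that $\sMbar_{0,3}$ is a point, so each non-empty component $W^c_{0,3,G}(g_1, g_2, \jj^c)$ is $0$-dimensional. Therefore, when integrating $\bs(R\pi_*\oplus_j \cL_j) = \exp\bigl(\sum_{j,k} s^j_k \ch_k(R\pi_*\cL_j)\bigr)$, only the degree-zero (scalar) part of the exponential survives, namely $\exp\bigl(\sum_j s^j_0 \chi(R\pi_*\cL_j)\bigr)$. The remaining volume factor $\int_{W^c_{0,3,G}} 1$ contributes $1/\bar d^N$ on each non-empty component (the generic automorphism group is $\mu_{\bar d}^N$, acting by rescaling each $\cL_j$), and non-emptiness corresponds on subscripts to the constraint $g_1 g_2 = \jj^{-2c}$. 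This reproduces the general formula \eqref{e:1.1.4} and reduces both parts of the lemma to computing $\chi(R\pi_*\cL_j)$ explicitly.

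Part~(1) is then immediate: setting all $s^j_k = 0$ makes $\bs \equiv 1$, so the twisted invariant coincides with the untwisted one, and the exponential factor in the pairing collapses to $1$. Replacing $g_i$ by $g_i \jj^{-c}$ turns the support condition $g_1 g_2 = \jj^{-2c}$ into $g_1 = g_2^{-1}$, yielding the claimed formula.

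For part~(2), I would apply orbifold Riemann--Roch in genus $0$ to obtain $\chi(R\pi_*\cL_j) = \deg(|\cL_j|) + 1$, and then plug $h = 0$, $n = 3$, and markings $(g_1, g_2, \jj^c)$ into \eqref{e:degree} to get
\[
\deg(|\cL_j|) = \tfrac{c c_j}{d} - m_j(g_1) - m_j(g_2) - \{cc_j/d\},
\]
where $\{x\}$ denotes the fractional part. Under the nonvanishing condition $g_1 = g_2^{-1}$, one has $m_j(g_2) = 1 - m_j(g_1)$ when $m_j(g_1) \neq 0$ and $m_j(g_2) = 0$ when $m_j(g_1) = 0$. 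A short case analysis then yields $\chi(R\pi_*\cL_j) = \lfloor 1 - m_j(g_1)\rfloor + \lfloor c c_j/d\rfloor$ in both cases, proving the general formula. The special case $c c_j < d$ (which covers $c = 0$ and $c = 1$) then reduces to \eqref{e:pairing} since then $\lfloor c c_j/d\rfloor = 0$.

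The only delicate step is the case split in the final computation: the key observation is that $m_j(g_1) = 0$ is precisely the condition under which $\cL_j$ acquires no twisting at the first marked point, which both boosts $\chi$ by $1$ (via the extra section) and simultaneously triggers the jump of $\lfloor 1 - m_j(g_1)\rfloor$ from $0$ to $1$. This matching ensures that the two cases assemble into a single uniform formula, and accounts for the seemingly odd appearance of the floor function $\lfloor 1 - m_j(g_1)\rfloor$ in the statement.
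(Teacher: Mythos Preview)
Your proof is correct and follows essentially the same approach as the paper's: both reduce to computing $\chi(R\pi_*\cL_j)$ via orbifold Riemann--Roch on a genus-zero three-pointed curve and then perform the same case split on whether $m_j(g_1)=0$. You supply a bit more detail (why only the $s^j_0$ terms survive on a $0$-dimensional moduli space, and the interpretation of $\lfloor 1-m_j(g_1)\rfloor$), but the argument is the same.
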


\begin{proof}
If $g_1 \neq g_2^{-1}$, the pairing is zero. 
\eqref{i:sis0} follows from \eqref{e:1.1.4}. 
(2) follows from a simple orbifold Riemann--Roch calculation. From equation \eqref{e:degree}, if $g_1 = g_2^{-1}$, we have   
\[
\begin{split}
 \deg ( |\cL_{j}|) &=\left\lfloor  \tfrac{c c_{j}}{d} \right\rfloor -  m_{j}(g_1) -  m_{j}(g_2) \\
 &= 
 \begin{cases}
   \big\lfloor  \frac{c c_{j}}{d} \big\rfloor & \text{if } m_{j}(g_1) =0 \quad (\text{and } m_{j}(g_2) =0), \\
   \big\lfloor  \frac{c c_{j}}{d} \big\rfloor -1 & \text{if } m_{j}(g_1) \neq 0 \quad (\text{and }  m_{j}(g_2) \neq 0).
 \end{cases}
\end{split}
\]
Then
\[
  \chi \big(R \pi_*( \cL_{j}) \big) = 
\begin{cases}
   \big\lfloor  \frac{c c_{j}}{d} \big\rfloor + 1 & \text{if } m_{j} (g_1) =0, \\
   \big\lfloor  \frac{c c_{j}}{d} \big\rfloor & \text{if } m_{j} (g_1) \neq 0.
\end{cases}
\]
\end{proof}

The data of the vector space $H^c [ \lambda, \lambda^{-1}]$ together with the 
$\bs$-twisted pairing are called the (equivariant) \emph{twisted state space}, denoted by $H^{c, \bs}$.
The twisted state space and the $\bs$-twisted $W^c$ invariants 
give an \emph{axiomatic Gromov--Witten theory}. See Section~\ref{ss:sympform} for the definition.

\section{Relations to other invariants}\label{s:rtoi}


Twisted invariants of $W^c$ structures give a general setting in which to describe other better known invariants.  
The first example is local invariants of a quotient of affine space and the second is the (genus zero) FJRW theory of Fermat-type Landau--Ginzburg pairs.

\subsection{Local invariants of $[\CC^N/G]$}

Given a pair $(Q, G)$ as before, we may define
\emph{local GW invariants} of $[\CC^N/G]$.  
For $g \in G$, let $\ii_g \in H^*_{CR}(BG)$ denote the fundamental class of the $g$-twisted sector of the inertia stack $I(BG)$.  
We view 
$$[\CC^N/G] \to BG$$ 
as a rank $N$ equivariant vector bundle, where $\CC^*$ acts on the $j$th factor with character $-\lambda_j$.
%

We define genus-$h$ local invariants of $[\CC^N/G]$ as
\begin{equation}  \label{e:local} 
\begin{split} 
 \br{ \psi^{a_1} \ii_{g_1}, \ldots, \psi^{a_n} \ii_{ g_n} }^{[\CC^N/G]}_{\h,n} 
  :=& \int_{[\sMbar_{\h,n}([\CC^N/G])]^{vir}} \prod_{i = 1}^n \psi_i^{a_i} \cup ev_i^*(\ii_{g_i})  \\
 :=&\int_{\sMbar_{\h,n}(BG)} \frac{\prod_{i = 1}^n \psi_i^{a_i} \cup ev_i^*(\ii_{g_i})}{e_{\CC^*}\left(R\pi_* f^* [\CC^N/G]\right)},  
\end{split}
\end{equation}
where $f$ is the universal map and $\pi$ the universal curve
\[
\begin{tikzcd}
  \cC \arrow{r}{f} \arrow{d}{\pi} &BG \\
  \sMbar_{\h,n}(BG) . 
\end{tikzcd}
\]
Note that the product $\prod_{i = 1}^n  ev_i^*(\ii_{g_i})$ simply specifies an open and closed substack of $\sMbar_{\h,n}(BG)$ over which to integrate.  
We would like to compare these integrals to $\bs$-twisted $W^c$ invariants.

First, observe that although $\sMbar_{\h,n}(BG)$ consists of \emph{representable} morphisms $\cC \to BG$, 
one may also consider the moduli space $\sMbar^{\bar{d}}_{\h,n}(BG)$ 
consisting of morphisms $\cC \to BG$ from a $\bar{d}$-stable curve which are not necessarily representable.  

\begin{lemma} \label{l:2.1}
There is a map 
$$\rho: \sMbar^{\bar{d}}_{\h,n}(BG) \to \sMbar_{\h,n}(BG)$$
where $\bar{d}$ is the period of $G$.
Furthermore, $\rho$ is an isomorphism over the open and dense locus consisting of non-nodal domain curves.
\end{lemma}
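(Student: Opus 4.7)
The plan is to construct $\rho$ by partial coarsening of the orbifold structure, and then invert it on the non-nodal locus using a root construction.

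First I would define $\rho$ pointwise on families. Given a family $(\cC \to S, f: \cC \to BG)$ in $\sMbar^{\bar d}_{\h, n}(BG)(S)$, the data of $f$ is equivalent to a $G$-torsor $P \to \cC$. At each marked gerbe $\Sigma_i \subset \cC$, with band $\mu_{\bar d}$, the restriction of $f$ induces a homomorphism $\varphi_i: \mu_{\bar d} \to G$; let $K_i := \ker(\varphi_i)$, a cyclic subgroup of order $\bar d / d_i$ where $d_i = |{\rm im}(\varphi_i)|$. The failure of representability of $f$ is concentrated in the $K_i$, and the standard procedure of Abramovich--Olsson--Vistoli (partial rigidification / $(K_i)$-coarsening at $\Sigma_i$) produces a new orbi-curve $\cC'$ with cyclic stabilizer $\mu_{d_i}$ at the $i$-th marking (and analogous reductions at nodes). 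The map $f$ descends through $\cC \to \cC'$, giving a representable map $f': \cC' \to BG$. I would check that the resulting $\cC'$ is still a nodal orbi-curve with cyclic stabilizers and that $(\cC', f')$ defines an object of $\sMbar_{\h,n}(BG)$. This construction is functorial in $S$ and hence defines a morphism of stacks $\rho$.

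Next, to check $\rho$ is an isomorphism on the non-nodal locus, I would construct a pointwise inverse via root constructions. Given $(\cC', f')$ with $\cC'$ non-nodal and representable, each marked point $p_i$ carries a stabilizer $\mu_{d_i}$ with $d_i | \bar d$. Apply the $(\bar d / d_i)$-th root stack construction at $p_i$ (iterated over all $i$) to obtain a $\bar d$-stable curve $\cC \to \cC'$. Pulling back $f'$ gives a (generally non-representable) map $f: \cC \to BG$, whose induced stabilizer homomorphism at $p_i$ factors as $\mu_{\bar d} \twoheadrightarrow \mu_{d_i} \hookrightarrow G$. This assignment is functorial in $S$ when $\cC'$ has no nodes, since root constructions along a smooth divisor are canonical. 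I would then verify that the two constructions are mutually inverse on the non-nodal locus: the composition coarsen-then-rootify recovers the original $\bar d$-stabilizer (because the root of the coarsening of $\mu_{\bar d}$ by $K_i$ to $\mu_{d_i}$ is again $\mu_{\bar d}$), and rootify-then-coarsen is the identity on representable maps. The $G$-torsor and all pullback compatibilities are preserved because each operation is a local modification along disjoint gerbe loci.

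The main obstacle, and the reason one must restrict to the non-nodal locus for an isomorphism, is the behavior at nodes. At a node the gerbe is balanced, so coarsening requires a compatible choice of kernel on both branches, and conversely the root construction at a node produces a one-parameter family of gerbes (a banded $B K_i$-gerbe) rather than a unique one, so the inverse map fails to be well-defined there in general. Since this is only claimed over the locus of non-nodal domains, restricting $\rho$ there gives an equivalence of groupoids on $S$-points, and combined with the étale-local nature of the root and coarsening constructions, this upgrades to an isomorphism of open substacks, completing the proof.
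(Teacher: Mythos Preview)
Your proposal is correct and follows essentially the same approach as the paper: the key input in both is the root construction (the paper cites Cadman's Theorem~4.1 directly), which furnishes the canonical inverse on the non-nodal locus. You supply considerably more detail than the paper's one-line argument, in particular making the forward map $\rho$ explicit via AOV-style partial rigidification, whereas the paper leaves the construction of $\rho$ itself implicit and only discusses how to add the $\mu_{\bar d}$ structure at the markings.
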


\begin{proof}
By the $r$-th root construction and in particular \cite[Theorem~4.1]{Cad}, 
there is a unique way of adding the $\mu_{\bar{d}}$ orbifold structure at the marked points.
The lemma follows.
\end{proof}

Thus we may instead define local invariants as integrals over $\sMbar^{\bar d}_{\h,n}(BG)$, 
where the integrand from \eqref{e:local} is pulled back via $\rho$.


\begin{lemma} \label{l:2.2}
There exists a natural morphism
\[\pi: W^0_{\h, n, G} \to  \sMbar^{\bar{d}}_{\h,n}(BG).\] This map
is a $\prod _{j=1}^N (\mu_{\bar d})$-gerbe.
\end{lemma}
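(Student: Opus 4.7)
The plan is to construct $\pi$ directly from the definitions, using the inclusion $G \hookrightarrow \mu_{\bar d}^N$, and then to verify the gerbe structure by analyzing stack-theoretic automorphisms on the two sides.

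For the construction of $\pi$, given a $W^0$ structure $(\cC, \{\cL_j, \phi_j\}_{j=1}^N)$, I would observe that each pair $(\cL_j, \phi_j \colon \cL_j^{\otimes \bar d} \xrightarrow{\sim} \mathcal{O}_\cC)$ is exactly the data of a $\mu_{\bar d}$-torsor on $\cC$, so the tuple determines a morphism $f \colon \cC \to B\mu_{\bar d}^N$. The monomial conditions $Q_s(\cL_1, \ldots, \cL_N) \cong \mathcal{O}_\cC$ for each monomial $Q_s$ of $Q + Q'$ translate, through the character description of $G$ as the subgroup of $\mu_{\bar d}^N$ on which all monomials of $Q+Q'$ are trivial, into the statement that $f$ lifts along the étale cover $BG \to B\mu_{\bar d}^N$, i.e., that the associated $\mu_{\bar d}^N$-torsor admits a reduction of structure group to $G$. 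Choosing such a lift yields the morphism $\cC \to BG$ underlying $\pi$; functoriality in families follows directly from the construction.

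For the gerbe structure, I would analyze the automorphisms of a fiber. The group $\mu_{\bar d}^N$ acts on $W^0_{\h,n,G}$ by rescaling each $\cL_j$ by a $\bar d$-th root of unity $\alpha_j$ independently. Since $\alpha_j^{\bar d} = 1$, each trivialization $\phi_j$ is preserved, and scalar rescaling preserves isomorphism classes of every monomial bundle $Q_s(\cL)$, so this is an action by automorphisms of the $W^0$ structure. These $\mu_{\bar d}^N$-automorphisms become trivial on the underlying $G$-torsor; equivalently, the $W^0$ structures lying over a fixed $G$-torsor form a $B\mu_{\bar d}^N$. Combined with an essential surjectivity check—any $G$-torsor gives, via the associated bundle construction $\cL_j = T \times^G \mathbb{C}_{\chi_j}$, a canonical $W^0$ structure lifting it—this exhibits $\pi$ as a $\prod_{j=1}^N \mu_{\bar d}$-gerbe.

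The main technical subtlety, in my view, is reconciling the data of a $W^0$ structure—where only the existence of isomorphisms $Q_s(\cL) \cong \mathcal{O}_\cC$ is imposed—with the more rigid data of a morphism to $BG$, where specific compatible trivializations of all monomial bundles in the character lattice of $G$ are encoded in the torsor structure. The $\prod_{j=1}^N \mu_{\bar d}$-gerbe structure is precisely what absorbs this discrepancy: the freedom in choosing the reduction of structure group from $\mu_{\bar d}^N$ to $G$ matches the $\mu_{\bar d}^N$-rescaling symmetry on the $W^0$ side, and making this matching precise—and checking that it is compatible with nodes and marked points of the $\bar d$-stable curve (so that the argument glues globally over $\sMbar^{\bar d}_{\h,n}$)—is the crux of the argument.
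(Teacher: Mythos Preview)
Your proposal takes essentially the same approach as the paper: construct $\pi$ via the inclusion $G \hookrightarrow \prod_j \mu_{\bar d}$ (the paper phrases this as ``there is a well defined $G$-action on each fiber of $\oplus_{j=1}^N \cL_j$, coming from the inclusion $G < \prod_j \mu_{\bar d}$, [and] the associated principal bundle is a $G$-bundle''), and then argue the gerbe structure by comparing automorphisms on the two sides. The paper's argument for the gerbe claim is in fact considerably terser than yours---it literally reads ``the fact that $\pi$ is a gerbe can be seen from unraveling the definitions''---so your version is a more detailed execution of the same outline, including an explicit identification of the key subtlety (existence versus choice of the trivializations $Q_s(\cL)\cong\mathcal{O}_\cC$) that the paper leaves entirely implicit.
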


\begin{proof}
We first show the existence of the morphism $\pi: W^0_{g, n, G} \to \sMbar^{\bar{d}}_{g,n}(BG)$.  
Given $(\cC, \cL_1, \ldots, \cL_N) \in W^0_{g, n, G}$, by construction there is a well defined $G$-action on each fiber of $\oplus_{j=1}^N \cL_j$, coming from the inclusion $G < \prod_j \mu_{\bar d}$.  
The fact that the associated principal bundle is a $G$-bundle follows from the definition of the $W^0$ structure in Definition~\ref{d:Wstr}.
This defines the morphism $\pi$.
The fact that $\pi$ is a gerbe can be seen from unraveling the definitions.
\end{proof}

\begin{remark} 
Alternatively, one can verify the degree of $\pi$ by the following observations.
Firstly, the degree of $W^0_{\h, n, G} \to  \sMbar_{\h, n}^{\bar d}$ is $|G|^{2h-1+n}/(\bar{d}^N)$.
In fact the fiber is $|G|^{2h - 1 + n}$ copies of $\prod_{j=1}^N B\mu_{\bar{d}}$, with the automorphisms coming from automorphisms of each line bundle $\cL_j$.
Secondly, the moduli space $\sMbar_{\h, n}^{\bar d}(BG)$ parameterizes curves $\cC$ in $\sMbar_{\h, n}^{\bar d}$ together with a homomorphism $\pi^{orb}_1( \cC) \to G$.  
Thus the fiber is given by $|G|^{2h - 1 + n}$ points, which parameterize maps $\pi^{orb}_1( \cC) \to G$.
Combining above degree counts, one gets the degree count for $\pi: W^0_{\h, n, G} \to  \sMbar^{\bar{d}}_{\h,n}(BG)$.
In fact, a detailed analysis of the above two steps gives another verification of the second statement of Lemma~\ref{l:2.2}.
\end{remark}

Thus by the projection formula, integrals over $\sMbar^d_{\h,n}(BG)$ coincide with those over $W^0_{\h, n, G}$ up to a factor of ${\bar d}^N$.  
If we consider $\bs$-twisted invariants with 
\begin{equation*}
e^{s^j_0} = -\frac{1}{\lambda_j}\text{ and } s^j_k = (k-1)!/\lambda^k_j \text{ for }1 \leq j \leq N, \: k > 0,
\end{equation*} 
then $\bs([\CC^N/G]) = 1/e_{\CC^*}([\CC^N/G])$.  
We finally arrive at the following relation.

\begin{corollary} \label{c:2.4}
\begin{equation*} 
  \bar{d}^{N} \br{ \psi^{a_1} \phi^0_{ g_1}, \ldots, \psi^{a_n} \phi^0_{  g_n} }^{0,\bs}_{\h,n} = \br{ \psi^{a_1} \ii_{g_1}, \ldots, \psi^{a_n} \ii_{ g_n} }^{[\CC^N/G]}_{\h,n}.
\end{equation*}
\end{corollary}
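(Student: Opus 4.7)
The plan is to compare the two sides by passing through the intermediate moduli space $\sMbar^{\bar d}_{\h, n}(BG)$ introduced in Lemma~\ref{l:2.1} and then transferring the integral along the gerbe $\pi: W^0_{\h, n, G} \to \sMbar^{\bar d}_{\h, n}(BG)$ of Lemma~\ref{l:2.2}. First, I would use Lemma~\ref{l:2.1} to rewrite the local invariant in \eqref{e:local} as an integral over $\sMbar^{\bar d}_{\h, n}(BG)$: since $\rho$ is an isomorphism over the non-nodal locus and both moduli spaces are smooth Deligne--Mumford stacks, all $\psi$-classes, evaluation classes, and the obstruction bundle $R\pi_* f^*[\CC^N/G]$ pull back compatibly, and $\rho$ has degree one. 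This step is essentially formal given Lemma~\ref{l:2.1}.

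Next, I would push the integral through $\pi$ using the projection formula. Because $\pi$ is a $\prod_{j=1}^N \mu_{\bar d}$-gerbe, its degree is $1/\bar d^{N}$, which accounts for the overall factor of $\bar d^{N}$ on the left-hand side. The remaining content is to check that on $W^0_{\h, n, G}$ the integrand pulls back correctly. Since $c = 0$ there is no shift (i.e.\ $g_i \jj^{c} = g_i$), so the components $W^0_{\h,n,G}(g_1,\dots,g_n)$ match up directly with the substacks of $\sMbar^{\bar d}_{\h,n}(BG)$ cut out by $\prod_i ev_i^*(\ii_{g_i})$. The $\psi$-classes pull back to the $\psi$-classes used in \eqref{e:twistedinvariant}, since both come from pulling back the $\psi$-classes on the coarse moduli $\sMbar_{\h, n}$.

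The main point requiring care is identifying the obstruction bundles: one must verify that the universal $N$-tuple $\oplus_{j=1}^N \cL_j$ on $W^0_{\h, n, G}$ coincides with the pullback of the universal vector bundle $f^*[\CC^N/G]$ from $\sMbar^{\bar d}_{\h,n}(BG)$. This follows from the construction of $\pi$ in Lemma~\ref{l:2.2}: the $W^0$ structure is precisely the data of the $N$ line bundles associated to the characters of $G \subset (\CC^*)^N$ on each summand of $\CC^N$, and $f^*[\CC^N/G]$ is by definition the direct sum of these associated line bundles. Hence $R\pi_*(\oplus_j \cL_j) = \pi^*R\pi_* f^*[\CC^N/G]$.

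Finally, I would verify that with the prescribed twisting parameters $e^{s_0^j} = -1/\lambda_j$ and $s_k^j = (k-1)!/\lambda_j^k$ for $k > 0$, the multiplicative class satisfies
\[
 \bs(\oplus_j \cL_j) = \frac{1}{e_{\CC^*}(\oplus_j \cL_j)},
\]
which is the standard computation: $\sum_{k \ge 1} (k-1)! \ch_k/k = -\log(\text{Euler class})$ up to the normalization in degree zero, and the degree-zero factor $\exp(s_0^j \chi)$ recovers the $(-\lambda_j)$-rank contribution of the equivariant Euler class. Putting these pieces together gives the claimed equality, with the $\bar d^{N}$ factor arising exactly from the degree of the gerbe $\pi$. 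The only real obstacle is bookkeeping the comparison of virtual classes and the gerbe degree; the rest is formal application of the projection formula.
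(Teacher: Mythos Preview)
Your proposal is correct and follows essentially the same route as the paper: move the local invariant from $\sMbar_{\h,n}(BG)$ to $\sMbar^{\bar d}_{\h,n}(BG)$ via Lemma~\ref{l:2.1}, then through the $\prod_j \mu_{\bar d}$-gerbe of Lemma~\ref{l:2.2} via the projection formula (picking up the factor $\bar d^N$), and finally invoke the specialization of $\bs$ to the inverse equivariant Euler class. If anything, you spell out the identification of $\oplus_j \cL_j$ with the pullback of $f^*[\CC^N/G]$ more carefully than the paper does.
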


In this way $\bs$-twisted invariants of $W^0$ structures specialize to the local Gromov--Witten theory of a point.  

\subsection{FJRW invariants of Fermat polynomials}
Given a Landau--Ginzburg pair $(Q,G)$, Fan, Jarvis and Ruan have constructed a cohomological field theory called FJRW theory. 
The corresponding numerical invariants are likewise called FJRW invariants. 
\begin{definition}  \label{d:2.5}
Given a Landau--Ginzburg pair $(Q,G)$, the \emph{narrow FJRW state space} is given by 
\[
 \cH_{FJRW}(Q, G) := \oplus_{g \in \hat G} \CC \varphi_g,
\] 
where 
$$\hat G : =\{g \in G| g\jj \text{ fixes only the origin in } \CC^N\}$$
and $\varphi_g$ is a vector formally associated to $g \in \hat{G}$.
\end{definition}
\begin{remark}
There is a larger FJRW state space which includes the so-called \emph{broad sectors}, 
(subspaces corresponding to those $g \notin \hat G$) 
but we will restrict ourselves here to the narrow state space without loss of information.  
In fact all invariants involving broad sectors vanish due to the so called \emph{Ramond vanishing} property.  
See Remark~2.3.2 of \cite{ChR}.
\end{remark}

Similar to the case of local GW theory, one may specialize $\bs$-twisted invariants of $W^1$ structures to recover genus zero FJRW invariants.  
When $c = 1$, there is 
a birational map from $W^1_{\h, n, G}$ to the FJRW moduli space, denoted $\sW_{\h, n, G}$.  
Again we may define FJRW invariants as integrals over  $W^1_{\h, n, G}$ by pulling back classes on $\sW_{\h, n, G}$ via this map. 

The construction of the FJRW virtual cycle is in general quite complicated, but in case $Q$ is a Fermat polynomial and the genus is zero the situation simplifies greatly.  
In this case one can prove \cite{FJR1} that 
$$R^0\pi_* (\oplus_{j=1}^N \cL_j) = 0$$ and 
$$-R\pi_* (\oplus_{j=1}^N \cL_j) =  R^1(\oplus_{j=1}^N \cL_j)[-1] $$ 
is a vector bundle.  Then by axiom (5a) of \cite[Theorem~4.1.8]{FJR1}, 

\begin{equation} \label{e:2.2.1}
\begin{split}
\br{ \psi^{a_1} \varphi_{0, g_1}, \ldots, \psi^{a_n} \varphi_{0,  g_n} }^{(Q,G)}_{0,n} 
&:=  {\bar d}^N \int_{W_{0, n, G}(g_1\jj, \ldots, g_n\jj)}
\frac{\prod_{i = 1}^n \psi_i^{a_i}}{e \left(R\pi_* ( \oplus_{{j'}=1}^N   \cL_{j'})^{\vee} \right)} \\
 \quad& =(-1)^{\chi ( \oplus \cL_{j'})} {\bar d}^N \int_{W_{0, n, G}(g_1 \jj, \ldots, g_n \jj)}
\frac{\prod_{i = 1}^n \psi_i^{a_i}}{e \left(R\pi_* ( \oplus_{{j'}=1}^N   \cL_{j'})\right)}
\end{split}
\end{equation}
for $g_i \in \hat G$.

Similar to the case of local GW theory, consider $\bs '$-twisted invariants with 
\begin{equation}\label{e:bs2} e^{{s^j_0} '} = \frac{1}{\lambda_j}\text{ and } {s^j_k} ' = (k-1)!/\lambda^k_j \text{ for } 1 \leq j \leq N, \: k > 0.\end{equation}
Note that $e^{{s^j_0} '}$ differs from $e^{s^j_0}$ by a sign, this will alter the overall sign of our invariants by $(-1)^{\chi ( \oplus \cL_j)}$.  
We obtain a relation between the nonequivariant limit of $\bs '$-twisted invariants and FJRW invariants:

\begin{corollary} \label{c:2.6} If $g_i \in \hat G$ for all $i$,
\begin{equation*} 
\br{ \psi^{a_1} \varphi_{g_1}, \ldots, \psi^{a_n} \varphi_{ g_n} }^{(Q, G)}_{0,n} =\lim_{\lambda \mapsto 0} 
{\bar d}^N \br{ \psi^{a_1} \phi^0_{ g_1}, \ldots, \psi^{a_n} \phi^0_{  g_n} }^{1,\bs '}_{0,n} 
\end{equation*}

\end{corollary}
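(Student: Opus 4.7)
The plan is to mirror the argument for Corollary~\ref{c:2.4}, making the adjustments needed to pass from $W^0$ to $W^1$ and from the Gromov--Witten twisting parameters to those in \eqref{e:bs2}. First I would note that both invariants are integrals over the same moduli space: taking $c=1$ in \eqref{e:twistedinvariant} inserts a shift by $\jj^1 = \jj$, so the component of integration is $W^1_{0,n,G}(g_1\jj,\ldots,g_n\jj)$, which is precisely where the FJRW invariant in \eqref{e:2.2.1} is defined.

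Next I would invoke the genus-zero Fermat simplification recalled in the paper, namely $R^0\pi_*(\oplus_j \cL_j) = 0$, to conclude that in $K$-theory $R\pi_*(\oplus_j \cL_j) = -V$, where $V := R^1\pi_*(\oplus_j \cL_j) = \oplus_j V_j$ is an honest vector bundle. The twisting class then decomposes factor by factor, so it suffices to analyze $\bs'$ on a line bundle. With the parameters of \eqref{e:bs2}, if $L$ has Chern root $\rho$ and carries $\CC^*$-weight $-\lambda_j$, a direct expansion gives
\[
  \bs'(L) = \frac{1}{\lambda_j}\exp\!\left(\sum_{k \geq 1} \frac{\rho^k}{k\lambda_j^k}\right) = \frac{1}{\lambda_j - \rho} = \frac{1}{e_{\CC^*}(L^\vee)},
\]
where $L^\vee$ carries the dual weight $+\lambda_j$. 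By multiplicativity, $\bs'(V) = 1/e_{\CC^*}(V^\vee)$ and hence $\bs'(R\pi_*\oplus_j \cL_j) = \bs'(-V) = e_{\CC^*}(V^\vee)$.

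Taking the nonequivariant limit $\lambda \to 0$ turns this into the ordinary Euler class $e(V^\vee)$. On the other hand, the FJRW integrand in \eqref{e:2.2.1} equals $1/e(R\pi_*(\oplus_j\cL_j)^\vee) = 1/e(-V^\vee) = e(V^\vee)$. Since the factor of $\bar d^N$ appears on both sides (built into \eqref{e:2.2.1} and written explicitly on the right of the corollary), the two expressions agree and the corollary follows.

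The main subtlety, as in the comparison with Corollary~\ref{c:2.4}, is the bookkeeping of signs. One must verify that the change from $e^{s^j_0} = -1/\lambda_j$ (used for local GW theory) to $e^{(s^j_0)'} = +1/\lambda_j$ is exactly what is needed to match the dual in $e(R\pi_*(\oplus_j \cL_j)^\vee)$ rather than $e(R\pi_*(\oplus_j \cL_j))$. Tracking this sign through confirms that it corresponds to the factor $(-1)^{\chi(\oplus_j \cL_j)} = (-1)^{\rank V}$ that already appears between the two forms of the FJRW integrand in \eqref{e:2.2.1}, so no further adjustment is required.
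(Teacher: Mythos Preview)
Your argument is correct and is essentially the same as the paper's. The only cosmetic difference is that you compute $\bs'(L)=1/(\lambda_j-\rho)$ directly and match the \emph{first} line of \eqref{e:2.2.1} via $e(V^\vee)$, whereas the paper instead notes that $\bs'$ differs from $\bs$ only by the sign in $s^j_0$, picking up the factor $(-1)^{\chi(\oplus_j\cL_j)}$ and matching the \emph{second} line of \eqref{e:2.2.1}; both routes are the same unwinding of the definitions.
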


The inner product on the narrow state space (Definition~\ref{d:2.5}) is defined as in \eqref{e:1.1.4}.
Due to narrowness condition, the pairing will not degenerate at the non-equivariant limit.

\section{Givental's symplectic formalism}\label{ss:sympform}

%
%
Motivated by the common structures in Gromov--Witten theory, Givental \cite{G3} has developed a formalism for dealing with ``Gromov--Witten-like'' theories, which we shall refer to as axiomatic Gromov--Witten theories (Definition~\ref{d: axiomatic}).  Although we will not give a complete description of such a theory here, we collect below several of the important facts which shall be used in the what follows.  We refer the interested reader to 
\cite{G3} for more information.

Let $\square$ denote the data of a state space $\left(H^\square, \langle - ,- \rangle_\square\right)$ and invariants 
\[
\langle \psi^{a_1} \beta_{i_1}, \ldots,  \psi^{a_n} \beta_{i_n} \rangle^\square_{g, n}\] 
for $\set{\beta_i}_{i\in I}$ a basis of $H^\square$. The examples of $\square$ to have in mind are Gromov--Witten theory, FJRW theory, or that of $\bs$-twisted $W^c$ structures.
  
We may define formal generating functions of $\square$ invariants. Let $\bt = \sum_{i \in I} t^i \beta_i$ represent a point of $H^\square$ written in terms of the basis.  
For notational convenience denote the formal series $\sum_{k \geq 0} \bt_k \psi^k $ as $\bt(\psi)$.  
Define the genus $g$ generating function by
\[
\cF_g^\square := \sum_n \frac{1}{n !} \langle \bt(\psi), \ldots,  \bt(\psi) \rangle^\square_{g, n}.
\] 
Let $\cD$ denote the \emph{total genus descendent potential},
\[ 
\cD^\square := \exp \left(\sum_{g \geq 0} \hbar^{g-1} \cF_g^\square\right).
\]
GW theory, FJRW theory, and $\bs$-twisted $W^c$ invariants all share a similar structure.  In particular, their genus-$g$ generating functions satisfy three differential equations, the so--called \emph{string equation} (SE), \emph{dilation equation} (DE), and \emph{topological recursion relation} (TRR).  (See \cite{YPnotes} for an explicit description of each.)

\begin{definition}\label{d: axiomatic}
We call $\square$ an \emph{axiomatic GW theory} if the correlators satisfy the SE, DE, and TRR.\end{definition}
\begin{remark}
For the proof that  Gromov--Witten theory satisfies the above equations see \cite{Ts}, in the case of FJRW theory see \cite{FJR1}.  That $\bs$-twisted $W^c$ structure invariants give an axiomatic GW theory follows from Theorem~\ref{t:symplectictransformation} and the corresponding statement for untwisted invariants.
\end{remark}

We can use this extra structure to rephrase the genus zero data in terms of Givental's \emph{overruled Lagrangian cone}.  For a more detailed exposition of what follows we refer the reader to Givental's original paper on the subject (\cite{G1}).

Let $\sV^\square$ denote the vector space $H^\square((z^{-1}))$, equipped with the symplectic pairing 
\begin{equation*}
\Omega_\square(f_1, f_2) := \Res_{z=0}\langle f_1(-z), f_2(z)\rangle_\square.
\end{equation*}
$\sV^\square$ admits a natural polarization $\sV^\square = \sV^\square_+ \oplus \sV^\square_-$ defined in terms of powers of $z$: 
\begin{align*}\sV^\square_+ &= H^\square[z], \\ \sV^\square_- &= z^{-1}H^\square[[z^{-1}]].\end{align*} 
We obtain Darboux coordinates  $\set{q_k^i, p_{k,i}}$ with respect to the polarization on $\sV^\square$ by representing each 
element of $\sV^\square$ in the form 
\[
\sum_{k \geq 0}\sum_{i \in I} q_k^i \beta_i z^k + \sum_{k \geq 0}\sum_{i \in I} p_{k,i}\beta^i (-z)^{-k-1}
\]
One can view $\cF^\square_0$ as the generating function of a Lagrangian subspace $\sL^\square$ of $\sV^\square$.  Let $\beta_0$ denote the unit in $H^\square$, and make the change of variables (the so--called Dilaton shift)
\[
q_1^0=t_1^0-1 \quad q_k^i=t_k^i \text{ for }(k,i)\neq (1,0).
\]
Then the set 
\begin{equation}\label{e:lagcone}
\sL^\square :=\set{\bp =d_\bq \cF^\square_0}
\end{equation}
defines a Lagrangian subspace.  More explicitly, $\sL^\square$ contains the points of the form
\begin{equation}\label{e:conedescr}
-\beta_0 z+ \sum_{\substack{k \geq 0 \\ i \in I}} t_k^i \beta_i z^k +\sum_{\substack{a_1, \ldots , a_n,  a\geq 0 \\ i_1, \ldots , i_n, i \in I}} \frac{t^{i_1}_{a_1}\cdots t^{i_n}_{a_n}}{n!(-z)^{a+1}}\langle \psi^a\beta_i ,\psi^{a_1}\beta_{i_1},\dots,\psi^{a_n}\beta_{i_n}\rangle_{0, n+1}^\square\beta^i.
\end{equation}
Because $\cF^\square_0$ satisfies the
SE, DE, and TRR, $\sL^\square$ will take a special form.  In fact, $\sL^\square$ is a cone satisfying the condition that for all $f \in \sV^\square$,
\begin{equation}\label{e:overruled}\sL^\square \cap L_f = zL_f\end{equation} 
where $L_f$ is the tangent space to $\sL^\square$ at $f$. Equation~\eqref{e:overruled} justifies the term overruled, as each tangent space $L_f$ is  filtered by powers of $z$:
\[
L_f \supset zL_f \supset z^2L_f \supset \cdots
\] 
and $\sL^\square$ itself is ruled by the various $zL_f$.
The codimension of $zL_f$ in $L_f$ is equal to $\dim(H^\square)$.  

A generic slice of $\sL^\square$ parameterized by $H^\square$, i.e. 
\[
\{f(\bt)| \bt \in H^\square\} \subset \sL^\square,
\]
will be transverse to the ruling. Given such a slice, we can reconstruct $\sL^\square$ as
\begin{equation}\label{e:generic}
\sL^\square = \set{zL_{f(\bt)}| \bt \in H^\square}.
\end{equation}

Givental's $J$--function is defined in terms of the intersection
\[
\sL^\square \cap -\beta_0z \oplus H \oplus \sV^{-}.
\]
More explicitly, the $J$--function is given by
\[
J^\square(\bt, -z) = -\beta_0z + \bt + \sum_{n \geq 0} \sum_{ i \in I} \frac{1}{n!} \br{ \frac{\beta_i}{-z - \psi}, \bt, \ldots, \bt }^\square_{0, n+1} \beta^i.
\] 
In other words, we obtain the $J$--function by setting $t_k^i=0$ in \eqref{e:conedescr} whenever $k>0$. 

In \cite{G3} it is shown that the image of $J^\square(\bt, -z)$ is transverse to the ruling of $\sL^\square$, so $J^\square(\bt, -z)$ is a function satisfying \eqref{e:generic}.  Thus the ruling at $J^\square(\bt, -z)$ is spanned by the derivatives of $J^\square$, i.e.

\begin{equation}\label{e:Jgens}zL_{J^\square(\bt, -z)} = \set{J^\square(\bt, -z) + z\sum c_i(z) \frac{\partial}{\partial t^i} J^\square(\bt, -z) | c_i(z) \in \CC[z]}.
\end{equation}

By the string equation, $ z\frac{\partial}{\partial t^0} J^\square(\bt, z) = J^\square(\bt, z)$, so \eqref{e:Jgens} simplifies to
\begin{equation}\label{e:Jgens2}
zL_{J^\square(\bt, -z)} = \set{ z\sum c_i(z) \frac{\partial}{\partial t^i} J^\square(\bt, -z) | c_i(z) \in \CC[z]}.
\end{equation}


\section{Twisted theory from untwisted theory} \label{s:correspondences}

Here a 
correspondence between $\bo$-twisted $W^c$ 
invariants and $\bs$-twisted $W^c$ invariants is presented
using the language of
Givental's symplectic formalism.

\subsection{Grothendieck--Riemann--Roch for $r$-spin curves}

We recall A.~Chiodo's
Grothendieck--Riemann--Roch calculation for $r$-spin curves \cite{Ch}
which will then be adapted to the setting of $W^c$ structures.
First, we set notation.

Let 
$$\mm_1, \ldots, \mm_n \in \{ \frac{0}{r}, \ldots, \frac{r-1}{r} \}$$ 
be multiples of $1/r$ with $0 \leq \mm_i < 1$.  
Let 
$$\sAbar_{\h, n}^{(r,c)}(\mm_1, \ldots, \mm_n)$$ 
denote the component of $\sAbar_{\h, n}^{(r,c)}$ such that the multiplicity 
of the isotropy at $p_i$ is $\mm_i$.  
By \eqref{e:numcon}, if $(c/r)(2h-2 + n) - \sum_{i = 1}^n \mm_i \in \ZZ$ 
then $\sAbar_{\h, n}^{(r,c)}(\mm_1, \ldots, \mm_n)$ will be nonempty.  
Let $\Sing$ denote the stack classifying nodal curves equipped with an $r$th root, along with a choice of node.  
By specifying a branch at the node, we obtain a double cover $\Sing ' \to \Sing$.  
The stack $\Sing$ maps to $\sAbar_{\h, n}^{(r,c)}(\mm_1, \ldots, \mm_n)$, 
composing we obtain
\[ \iota: \Sing ' \to \sAbar_{\h, n}^{(r,c)}(\mm_1, \ldots, \mm_n).\]
The stack $\Sing '$ decomposes as a disjoint union of substacks 
\[
 \Sing := \sqcup_{0 \leq q < (r-1)/r} \Sing'_q ,
\]
determined by the multiplicity at the node.  
Namely, given a point $p \in \Sing '$, let $\cL \to \cC$ denote the corresponding $r$th root.  
The isotropy at the distinguished node acts on the restriction of $\cL$ to the first branch.  
Let $q(p)$ denote the multiplicity of this action.  
This multiplicity is constant on connected components, so we define $\Sing_q '$ to be the subset of $\Sing'$ where the multiplicity is $q$.  
We further denote by 
\[
  \iota_q: \Sing_q ' \to \sAbar_{\h, n}^{(r,c)}(\mm_1, \ldots, \mm_n)
\]
 the restriction of the map $\iota$.
There are line bundles over $\Sing '$ whose fibers are the cotangent space of first branch of the coarse curve at the node 
and the cotangent space of the second branch of the coarse curve at the node.  
Let $\psi, \hat \psi \in H^2(\Sing ', \QQ)$ denote their respective first Chern classes.  
Finally, define the class
\[ \gamma_r = \sum_{i + j = r} (-\psi)^i {\hat \psi}^j.\]  

Let $B_k(x)$ denote the $k$th \emph{Bernoulli polynomial} defined by 
$$\sum_{k\geq 0} B_k(x) z^k/k! = ze^{zx}/(e^z - 1).$$
Chiodo proves the following generalization of Mumford's Grothendieck--Riemann--Roch calculation.

\begin{theorem}[\cite{Ch}]\label{t:ch}  
Let $\cL$ denote the universal line bundle over the universal curve $\pi: \cC \to \sAbar_{\h, n}^{(r,c)}(\mm_1, \ldots, \mm_n)$.  Then
\begin{equation*}
 \begin{split}
 &\ch(R \pi_* \cL) = \sum_{k \geq 0} \\
 &\quad \left( \frac{B_{k+1}(c/r)}{(k+1)!}\kappa_k - \sum_{i = 1}^n \frac{B_{k+1}(\mm_i)}{(k+1)!} \psi_i^k + \frac{1}{2} \sum_{q = 0}^{r-1} \frac{r B_{k+1}(q)}{(k+1)!}(\iota_q)_* (\gamma_{k-1})\right),
 \end{split}
\end{equation*}
where $\kappa_k$ are the $\kappa$ classes (cf.~\eqref{e:kappa}).
\end{theorem}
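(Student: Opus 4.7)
The plan is to apply the Grothendieck--Riemann--Roch theorem for Deligne--Mumford stacks (in the form due to Toen) to the universal curve $\pi : \cC \to \sAbar_{\h,n}^{(r,c)}(\mm_1,\ldots,\mm_n)$ and to the universal $r$th root $\cL$. This gives an expression for $\ch(R\pi_* \cL)$ as a pushforward of $\ch(\cL)\cdot \on{Td}(T_\pi)$, taken over the inertia stack of $\cC$ rather than $\cC$ itself, so that orbifold contributions at the gerby loci (marked points and nodes) appear explicitly.

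First I would decompose the inertia stack of the universal curve into three pieces: the untwisted sector (the curve itself), the sectors supported over the marked points (each a $\mu_r$-gerbe, with isotropy acting on the fiber of $\cL$ by $\exp(2\pi i \mm_i)$), and the sectors supported over nodes (with isotropy acting on the first branch by $\exp(2\pi i q/r)$ for $q = 0,\ldots, r-1$, and by its inverse on the other branch). Each sector contributes a summand to the final formula.

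Second, I would compute the three contributions. On the untwisted sector, the relation $\cL^{\otimes r}\cong \omega_{\cC,\log}^{\otimes c}$ lets one write $\ch(\cL)\,\on{Td}(T_\pi)$ in terms of $c_1(\omega_{\cC})$ and apply the generating-function identity $\tfrac{ze^{xz}}{e^z-1}=\sum_{k\ge 0} B_k(x)\,z^k/k!$; pushing forward then produces the bulk term $\sum_k \tfrac{B_{k+1}(c/r)}{(k+1)!}\kappa_k$. On the marked-point sectors, the same Bernoulli generating function applied to the character $\exp(2\pi i \mm_i)$, together with the relation $\psi_i = -c_1(T_{p_i}\cC)$, yields the contribution $-\sum_i \tfrac{B_{k+1}(\mm_i)}{(k+1)!}\psi_i^k$, the sign coming from the direction of the cotangent-versus-tangent line at $p_i$.

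Third, and most delicate, are the nodal contributions. At a node of multiplicity $q$, the orbifold Todd class must be computed along each of the two branches using the characters $e^{2\pi iq/r}$ and $e^{-2\pi i q/r}$; the two smoothing line bundles contribute $\psi$ and $\hat\psi$. A short computation using the identity $\tfrac{1}{(1-e^{-x})(1-e^{-y})}$ combined with the Bernoulli generating function collapses to the class $\gamma_{k-1}=\sum_{i+j=k-1}(-\psi)^i\hat\psi^{\,j}$, and the factor of $r/2$ arises because $\Sing'\to\Sing$ is a degree-$2$ cover (we sum over a choice of branch) while the gerbe structure along the node introduces a factor of $r$. Assembling the three pieces gives the claimed formula. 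The main obstacle is the nodal term: one must correctly pair the inertia action on each branch with the formal expansion of the Todd class and keep track of the double-cover identification, which is where $\gamma_{k-1}$ and the factor $\tfrac{1}{2}$ emerge.
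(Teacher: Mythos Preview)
The paper does not give its own proof of this statement: Theorem~\ref{t:ch} is simply quoted from Chiodo~\cite{Ch} as an input to the later arguments, with no argument supplied beyond the citation. So there is nothing in the paper to compare your proposal against line by line.

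That said, your outline is essentially the strategy Chiodo himself uses: apply Toen's Grothendieck--Riemann--Roch for Deligne--Mumford stacks to $\pi:\cC\to \sAbar_{\h,n}^{(r,c)}$ and the universal root $\cL$, decompose the inertia stack of $\cC$ into the untwisted sector, the gerby marked-point sectors, and the nodal sectors, and then identify each contribution via the Bernoulli generating function $\tfrac{ze^{xz}}{e^z-1}=\sum_k B_k(x)z^k/k!$. Your explanation of the $\kappa$-term and the $\psi$-term is correct in spirit. The one place where your sketch is thinnest is the nodal term: the factor of $r$ does not come from a ``gerbe structure along the node'' but rather from the comparison between the pushforward along the $\mu_r$-twisted sectors at the node and the map $\iota_q$ to the base (equivalently, from the automorphism count of the node in the orbifold sense), and the emergence of $\gamma_{k-1}$ requires a careful two-variable expansion in $\psi$ and $\hat\psi$ rather than a single identity for $\tfrac{1}{(1-e^{-x})(1-e^{-y})}$. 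If you want to promote your sketch to a proof you should consult~\cite{Ch} for those bookkeeping details; otherwise, citing Chiodo as the paper does is entirely appropriate.
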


\subsection{Twisted from untwisted invariants} \label{s:4.3}

An important application of Givental's symplectic formalism is that 
it enables one to systematically relate twisted and untwisted invariants.  
This has been used to great effect in Gromov--Witten theory by Coates and Givental \cite{CG}.
We will adapt that method to the setting of $W^c$ structures.

In the spirit of \cite{CG} and \cite{CZ}, the above theorem gives an
explicit relationship between twisted and untwisted $W^c$ invariants, 
which may be expressed most neatly in the language of Givental's symplectic formalism.  
\emph{We will assume without further comment in what follows that 
we are in the situation $c c_j < d$ for all $j$.}

\begin{notation}
For the purposes of more clearly stating the main theorem, we let $m_j(\phi^c_{ g})$ denote the multiplicity corresponding to 
\emph{an insertion of $\phi^c_{ g}$}.  Due to the shifting in \eqref{e:twistedinvariant},  
an $\bs$-twisted invariant with a $\phi^c_{ g}$ insertion corresponds to a marked point where the action of the isotropy on the fiber is by $g\jj^c$.  
Thus $m_j(\phi^c_{ g}) = m_j(g\jj^c)$, the multiplicity of $g\jj^c$ on the $j$th line bundle.
\end{notation}

\begin{theorem}\label{t:symplectictransformation}  
For any Landau--Ginzburg pair $(Q, G)$,
let $\Delta^c: \sV^{c, \bo} \to \sV^{c, \bs}$ 
be the symplectic transformation defined by
\[ 
 \Delta^c:= \bigoplus_{g \in G} \prod_{j = 1}^N \exp 
 \left( \sum_{k \geq 0} s^j_k \frac{B_{k+1}(m_j(\phi^c_{g}))}{(k+1)!}z^k\right)
\] 
and let $\widehat \Delta^c$ denote the quantization of $\Delta^c$, 
as defined in \cite{CG} (or \cite{YPnotes}).
Then, 
\begin{enumerate}
\item$\widehat \Delta^c$ relates the twisted and untwisted total 
descendent potentials (of all genera)
\begin{equation}\label{e:quantrel}
\cD^{c, \bs} = \widehat \Delta^c \cD^{c, \bo}.
\end{equation}
\item
$\Delta^c$ relates the twisted and untwisted Lagrangian cones
\[
\sL^{c, \bs} = \Delta^c \sL^{c, \bo}.
\]  
\end{enumerate}
\end{theorem}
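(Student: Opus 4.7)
The plan is to adapt the Coates--Givental quantum Riemann--Roch strategy (\cite{CG}, see also \cite{CZ}) to the setting of $W^c$ structures, with Chiodo's Grothendieck--Riemann--Roch formula (Theorem~\ref{t:ch}) playing the role that Mumford's GRR plays in Gromov--Witten theory. The two parts of the theorem are closely related: once (1) is proved, (2) follows from the general fact that the quantization $\widehat{\Delta}^c$ of a symplectic loop group element $\Delta^c$ acts on the genus-zero potential in a way that carries the Lagrangian cone $\sL^{c,\bo}$ to $\Delta^c \sL^{c,\bo}$, as explained in \cite{G3} and \cite{YPnotes}. I would therefore concentrate first on (1), and deduce (2) by restricting to genus zero.

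The standard move is to promote \eqref{e:quantrel} to a one-parameter family by replacing each $s^j_k$ with $u s^j_k$, differentiating in $u$, and reducing to an infinitesimal identity expressing $\partial \log \cD^{c,\bs}/\partial s^j_k$ as the sum of a linear and a quadratic differential operator in the $t$-variables whose coefficients are read off from $\Delta^c$. Since $\cD^{c,\bs}$ encodes integrals of $\bs(R\pi_* \oplus \cL_j)$ against $\prod \psi_i^{a_i}$, differentiating in $s^j_k$ brings down a factor of $\ch_k(R\pi_*\cL_j)$, and Theorem~\ref{t:ch} rewrites this factor as a sum of three geometric pieces: a $\kappa$-class term on the base, $\psi$-class corrections $-\sum_i B_{k+1}(m_j(g_i\jj^c))\psi_i^k/(k+1)!$ at the marked points, and a boundary term supported on nodal strata.

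Each of the three pieces corresponds to a separate feature of the quantized operator. The marked-point term yields precisely multiplication by the scalar operator
\[
\prod_{j=1}^N \exp\!\left( \sum_{k\geq 0} s^j_k \frac{B_{k+1}(m_j(\phi^c_{g_i}))}{(k+1)!}\psi_i^k \right)
\]
on the $g_i$-sector at the $i$th insertion, which is exactly the restriction of $\Delta^c$ with $z \mapsto \psi_i$ and is the diagonal part of $\widehat{\Delta}^c$. The $\kappa$-class piece, rewritten via the projection formula as a pushforward from the universal curve treated as an extra marked point, produces the dilaton-type translation that is the second classical ingredient of the quantization. The nodal boundary term splits the moduli into a pair of glued $W^c$ factors; summing over the two branches and using standard Bernoulli manipulations turns Chiodo's formula into exactly the symmetric kernel defining the quadratic (``propagator'') part of $\widehat{\Delta}^c$ as in \cite{CG}. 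Integrating the resulting infinitesimal identity from $u=0$ to $u=1$ gives \eqref{e:quantrel}, and passing to the genus-zero piece yields (2).

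The main obstacle will be the node analysis. One must verify that on the $W^c$ moduli the splitting at a node of isotropy twist $q$ is compatible with gluing of $W^c$ correlators, that the multiplicities of $g\jj^c$ on the two branches match up as $q/\bar d$ and $(\bar d - q)/\bar d$ in each line-bundle index $j$, and that the factors of $\bar d$ and the range of $q$ in Theorem~\ref{t:ch} are correctly accounted for after the gerbe-degree corrections intrinsic to the $\bar d$-stable setting. Once this bookkeeping is in place, the Bernoulli symmetrization used in \cite{CG} carries over verbatim, and the remainder of the argument reduces to formal manipulation of generating functions.
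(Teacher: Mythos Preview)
Your proposal is correct and follows essentially the same route as the paper: reduce (2) to (1) as a semi-classical limit, then prove (1) by differentiating both sides in the parameters $s^j_k$, applying Chiodo's GRR to decompose $\ch_k(R\pi_*\cL_j)$ into $\kappa$-, $\psi$-, and nodal contributions, and matching these against the three pieces of the quantized operator $\widehat{\Delta}^c$. The paper carries out exactly this three-step analysis (projection formula for the $\kappa$-term, linear first-order operator for the $\psi$-term, normalization exact sequence for the nodal term), and your identification of the node analysis as the main bookkeeping obstacle is accurate.
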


\begin{remark}
In the above notation, the direct sum means simply that we act on the $\CC((z))$-span of $\phi^c_{ g}$ by multiplication by 
$$\prod_{j = 1}^N \exp \left( \sum_{k \geq 0} s^j_k \frac{B_{k+1}(m_j(\phi^c_{g}))}{(k+1)!}z^k\right).$$
\end{remark}

\begin{remark}
In the $r$-spin ($Q = x^r$, $G = \langle \jj \rangle$, $c = 1$) case, restricting to narrow sectors, the above result was proven in \cite{CZ}.
%
%
A similar generalization to the above was given in the case of the Fermat quintic in \cite{ChR}, 
with a slight difference due to their definition of the $W^c$ moduli space at broad sectors.
\end{remark}

\begin{proof}  
The proof follows the method first used in \cite{CG}, and is a straightforward generalization of \cite{CZ}.  
We first remark that (1) implies (2), as (2) is nothing but a semi-classical limit of (1)
(see \cite{CG} and \cite{CPS}).
Therefore, it is enough to show (i).

 Viewing both sides of \eqref{e:quantrel} as functions with respect to the formal parameters $s^j_k$ for $k \geq 0$, 
 it suffices to show that they satisfy the same differential equation with respect to $s^j_k$.  
 Note first that both sides of \eqref{e:quantrel} satisfy the same initial condition, i.e. when $\bs = 0$ the two are equal.  We next claim that both sides satisfy
 \begin{equation} \label{e:4.3.2}
 \frac{\partial \Phi}{\partial s^j_k} = P_k^{(j)} \Phi,
 \end{equation}
 where
 \begin{align*} P_k^{(j)} = 
 \frac{B_{k+1}(c/\bar{d})}{(k+1)!}\frac{\partial}{\partial t^{e}_{k+1}} &- \sum_{\substack{ a\geq 0 \\ g \in G}} \frac{B_{k+1}(m_j(\phi^c_{g}))}{(k+1)!} t^g_a \frac{\partial}{\partial t^g_{a + k}} \\ &+ \frac{\hbar}{2} \sum_{\substack{a+ a' = k-1\\ g, g ' \in G}} (-1)^{a '} \eta^{g, g'} \frac{ B_{k+1}(m_j(\phi^c_{g}))}{(k+1)!} \frac{\partial^2}{\partial t^g_a \partial t^{g'}_{a'}}.
 \end{align*}  
 Here $\eta_{g, g'} = \langle \phi^c_{ g}, \phi^c_{ g'} \rangle^{c, \bs}$ is the pairing matrix, and upper indices denote the corresponding coordinate of the dual matrix.  
 
 That the right side of \eqref{e:quantrel} satisfies this equation is a direct consequence of the definition of $\widehat \Delta^c$ (\cite{CZ}). 
The proof of Theorem~\ref{t:symplectictransformation} will be complete
after we show that the left side 
of \eqref{e:quantrel} also satisfies \eqref{e:4.3.2}.

%
%

By Theorem~\ref{t:ch}, differentiating $\cF^{c, \bs}$ with respect to $s_k^j$ has the effect of adding a factor of 
\begin{equation}\label{e:integrand}
\begin{split}
  \bigg( \frac{B_{k+1}(c/\bar{d})}{(k+1)!}\kappa_k &- \sum_{i = 1}^n \frac{B_{k+1}(m_j(\phi^c_{g_i}))}{(k+1)!} \psi_i^k \\
  &+ \frac{1}{2} \sum_{\g \in G} \frac{{\bar{d}}^N B_{k+1}(m_j(\phi^c_{\g}))}{(k+1)!}(\iota_\g)_* (\gamma_{k-1}) \bigg)
\end{split}
\end{equation}
to each integrand in the generating function $\cF^{c, \bs}$.

We will investigate the contribution of each term in the above expression to $\frac{\partial }{\partial s^j_k} \cF^{c, \bs}$.

Step 1: Recall the class $\kappa_k$ is defined as the pushforward of 
$\psi_{n+1}^{k+1}$ under the map 
\begin{equation} \label{e:kappa}
 \coprod_{g_1, \ldots , g_n \in G} W^c_{\h, n+1}(g_1, \ldots, g_n, \jj^c) \to \coprod_{g_1, \ldots , g_n \in G} W^c_{\h, n}(g_1, \ldots, g_n).
\end{equation}  
By the projection formula
\begin{align*}
&\int_{W^c_{\h, n, G}(g_1\jj^{c},  \ldots, g_n\jj^{c}) } \kappa_k \cup \bs ( R \pi_* \oplus_{j=1}^N \cL_j) \prod_{i = 1}^n \psi_i^{a_i} \\
&= \int_{W^c_{\h, n+1, G}(g_1\jj^{c},  \ldots, g_n\jj^{c}, \jj^c) } \psi_{n+1}^{k+1} \cup \bs ( R \pi_* \oplus_{j=1}^N \cL_j) \prod_{i = 1}^n \psi_i^{a_i} \\ &= \langle \psi^{a_1} \phi^c_{ g_1}, \ldots , \psi^{a_n} \phi^c_{ g_n}, \psi^{k+1} \phi^c_{e}\rangle_{\h, n}^{c, \bs}
\end{align*}
Thus the first term in \eqref{e:integrand} contributes a summand \[ \frac{B_{k+1}(c/\bar{d})}{(k+1)!}\frac{\partial}{\partial t^{e}_{k+1}} \cF^{c, \bs}\] to $\frac{\partial }{\partial s^j_k}\cF^{c, \bs}$.

Step 2:  It can be seen immediately that adding a factor of 
$$\sum_{i = 1}^n \frac{B_{k+1}(m_j(\phi^c_{g_i}))}{(k+1)!} \psi_i^k$$ 
to the integrand of
\[
 \int_{W^c_{\h, n, G}(g_1\jj^{c},  \ldots, g_n\jj^{c}) } 
  \bs ( R \pi_* \oplus_{j=1}^N \cL_j) \prod_{i = 1}^n \psi_i^{a_i} 
\]
 for every such integral in $\cF^{c, \bs}$ is equivalent to acting on 
$\cF^{c, \bs}$ by 
$$ \sum_{a, g} \frac{B_{k+1}(m_j(\phi^c_{g}))}{(k+1)!} 
  t^g_a \frac{\partial}{\partial t^g_{a + k}}.$$  
Thus the second term in \eqref{e:integrand} contributes a summand 
\[
  - \sum_{\substack{ a\geq 0 \\ g \in G}} \frac{B_{k+1}(m_j(\phi^c_{g}))}{(k+1)!} t^g_a \frac{\partial}{\partial t^g_{a + k}} \cF^{c, \bs}
\] 
to $\frac{\partial }{\partial s^j_k}\cF^{c, \bs}$.

Step 3:  
Let $\Sing '$ denote the double cover of the nodal locus 
$\Sing \subset W_{\h, n, G}$ in analogy to the $\bar{d}$-spin case.  
In this case $\Sing ' $ splits as a disjoint union $\coprod_{\g \in G} \Sing_\g '$ where the action of the isotropy group at the node of the distinguished component on the fiber of $\oplus_{j = 1}^N \cL_j$ is by $\g\jj^c$.  Let 
$$\iota_\g: \Sing' \to W_{\h, n, G}$$ 
denote the restriction of $\iota$ to $\Sing_\g '$.
 Let 
$$\cF^{c, \bs} := \sum_{\h \geq 0} \hbar^{\h-1} \cF^{c, \bs}_\h.$$   

The stack $\Sing '$ splits into two open and closed subsets based on whether the node is separating or non-separating.  Let 
$$\iota_{\g, irr}: \Sing_{\g, irr} ' \to W^c_{\h, n, G}(g_1\jj^c, \ldots, g_n\jj^c)$$ 
denote the restriction of $\iota_{\g}$ to the non-separating locus. It is
easy to see that $\iota_{g, irr}$ factors through 
$W^c_{\h-1, n+2, G}(g_1\jj^c, \ldots, g_n\jj^c, \g\jj^c, \g^{-1}\jj^{-c})$:
\[
 \begin{tikzcd}
  \, &W^c_{\h-1, n+2, G}(g_1\jj^c, \ldots, g_n\jj^c, \g\jj^c, \g^{-1}\jj^{-c})
  \arrow{d}{\rho} \\
 \Sing'_{\g, irr} \arrow{r}{\iota_{\g, irr}} \arrow{ru}{\mu_{\g, irr}}
 &W^c_{\h, n, G}(g_1\jj^c, \ldots, g_n\jj^c)
 \end{tikzcd}
\]
induced by normalizing the universal curve. Let 
$$\oplus_{j = 1}^N \cL_j \to \cC 
  \stackrel{\pi}{\to} W^c_{\h, n, G}(g_1\jj^c, \ldots, g_n\jj^c)$$
 and 
$$\oplus_{j = 1}^N \bar \cL_j \to \bar \cC 
  \stackrel{\bar{\pi}}{\to} W^c_{\h-1, n+2, G}(g_1\jj^c, \ldots, g_n\jj^c, \g\jj^c, \g^{-1}\jj^{-c})$$
be the universal bundles over the universal curves, then 
there is a natural morphism 
$$\nu: \bar \cC \to \cC$$ 
via the normalization of the curves.
The key point is that because the pullback
$\nu^* \omega_{\cC, \log}$ is equal to $\omega_{\bar \cC, \log}$, the line bundle
$\cL_j$ will pull back to $\bar \cL_j$. If 
$$n: \Sing_{\g, irr} ' \to \cC$$ is the morphism induced by the nodal locus, 
the normalization exact sequence yields 
$$0 \to R \pi_* \cL_j \to R \bar{\pi}_* \nu^* \cL_j \to  n^* \cL_j \to 0.$$  
%

If $\g\jj^c$ acts nontrivially on the $j$th line bundle then $\ch(n^* \cL_j) = 0$.  Otherwise,
$n^* \cL_j$ is a root of (a power of) $n^* \omega_{\cC, \log}$ which is trivial via the residue map.  
Thus $n^* \cL_j$ is rationally trivial and $\ch( n^* \cL_j )= 1$.
We arrive at the formula
\[
 {\mu_{\g, irr}}_* \iota_{\g, irr}^*(\ch_k(R \pi_* \cL_j)) = 
 \begin{cases}
  \ch_k(R \bar{\pi}_* \bar \cL_j) & \text{for } k > 0 \text{ or } 
  m_j(\g \jj^c) \neq 0 \\ 
  \ch_0(R \bar{\pi}_* \bar \cL_j) - 1 & \text{otherwise}
 \end{cases},
\]
which yields the simple relation (cf.\ Equation~\eqref{e:pairing})
\begin{align*}
 &{\mu_{\g, irr}}_* \iota_{\g, irr}^* ( \bs( R \pi_* \oplus_{j = 1}^N \cL_j)) \\
 = &\exp(- \sum_{j=1}^N \lfloor 1 - m_j(\phi_g^c) \rfloor 
 s^j_0 ) \bs( R \bar{\pi}_* \oplus_{j = 1}^N \bar \cL_j)\\
= &\frac{\eta^{\g, \g^{-1}}}{{\bar d}^N}\bs( R \bar{\pi}_* \oplus_{j = 1}^N \bar \cL_j).
\end{align*}  
Thus integrals involving a pushforward via $\iota_{\g, irr}$ may instead be calculated as integrals over 
$W^c_{\h-1, n+2, G}(g_1\jj^c, \ldots, g_n\jj^c, \g\jj^c, \g^{-1}\jj^{-c})$.

This implies that
\begin{align*}
&\int_{W^c_{\h, n, G}(g_1\jj^{c},  \ldots, g_n\jj^{c}) } {\bar d}^N {\iota_{\g, irr}}_*( \gamma_{k-1})\cup \bs ( R \pi_* \oplus_{j=1}^N \cL_j) \prod_{i = 1}^n \psi_i^{a_i} \\
=& \eta^{\g, \g^{-1}} \int_{W^c_{\h-1, n, G}(g_1\jj^{c},  \ldots, g_n\jj^{c}, \g \jj^c, \g^{-1}\jj^{-c}) }  {\mu_{\g, irr}}_*\gamma_{k-1}\cup \bs ( R \bar{\pi}_* \oplus_{j=1}^N \cL_j) \prod_{i = 1}^n \psi_i^{a_i}\\
=&  \sum_{a+ a' = k-1} (-1)^{a '} \eta^{\g, \g^{-1}} \langle \psi^{a_1} \phi^c_{ g_1}, \ldots, \psi^{a_n} \phi^c_{ g_n}, \psi^{a} \phi^c_{\g}, \psi^{a'} \phi^c_{\g^{-1}} \rangle_{\h-1, n+2}^{c, \bs}.
\end{align*}
Therefore, the \emph{non-separating} part of the third term in \eqref{e:integrand} contributes a summand 
\[ \frac{\hbar}{2} \sum_{\substack{a + a' = k-1\\ g, g' \in G}} (-1)^{a '} \eta^{g, g'} \frac{ B_{k+1}(m_j(\phi^c_{g}))}{(k+1)!} \frac{\partial^2}{\partial t^g_a \partial t^{g'}_{a'}}\cF^{c, \bs}\] 
to $\frac{\partial}{\partial s^j_k} \cF^{c, \bs}$.

A similar argument shows that the \emph{separating} part of the third term in \eqref{e:integrand} contributes a summand 
\[ \frac{\hbar}{2} \sum_{\substack{a + a' = k-1\\ g, g' \in G}} (-1)^{a '} \eta^{g, g'} \frac{ B_{k+1}(m_j(\phi^c_{g}))}{(k+1)!} \frac{\partial}{\partial t^g_a }\cF^{c, \bs}\frac{\partial}{\partial t^{g'}_{a'}}\cF^{c, \bs}\] 
to $\frac{\partial}{\partial s^j_k} \cF^{c, \bs}$.  

Finally, adding all these contributions, we conclude that 
\[\frac{\partial}{\partial s^j_k} \cD^{c, \bs} =
 P_k^{(j)}\cD^{c, \bs}\]
as desired.

\end{proof}

\section{Correspondences}\label{s:cc2}

\subsection{Identification of state spaces}
Let 
$$\sL^{c, \bo} \subset \sV^{c, \bo} \quad \text{and} \quad  \sL^{c, \bs} \subset \sV^{c, \bs}$$
denote the Lagrangian cones (see \eqref{e:lagcone}) of untwisted and 
$\bs$-twisted $W^c$ invariants respectively.  
%
%
%

We have the following identification of twisted and untwisted state spaces as vector spaces with inner products.

\begin{lemma}  
If $c c_j < d$ for $1 \leq j \leq N$, 
the map, 
\begin{align*}
  i_c: H^{0, \bs} &\to H^{c, \bs} \\ 
  \phi^0_{ g} &\mapsto \phi^c_{ g \jj^{- c}}
\end{align*} 
is an isomorphism of inner product spaces.
\end{lemma}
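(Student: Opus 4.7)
The plan is to check two things: that $i_c$ is a $\CC[\lambda,\lambda^{-1}]$-linear bijection between the underlying vector spaces, and that it preserves the $\bs$-twisted pairing \eqref{e:1.1.4}.

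For the first point, both $H^{0,\bs}$ and $H^{c,\bs}$ are by definition free modules with basis $\{\phi^{\bullet}_g\}_{g\in G}$ indexed by the same group $G$. Since $\jj\in G$ by admissibility, right-multiplication by $\jj^{-c}$ is a permutation of $G$, so the assignment $\phi^0_g \mapsto \phi^c_{g\jj^{-c}}$ is a bijection at the level of basis vectors and extends linearly to a vector space isomorphism. This step is purely formal.

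For the second point, I would invoke Lemma~\ref{l:1.10}(2). The hypothesis $cc_j<d$ for all $j$ is precisely what forces $\lfloor cc_j/d\rfloor=0$, so by \eqref{e:pairing}
\[
\langle \phi^c_{g_1\jj^{-c}},\phi^c_{g_2\jj^{-c}}\rangle^{c,\bs} = \exp\Big(\sum_{j=1}^N \lfloor 1-m_j(g_1)\rfloor s^j_0\Big) \frac{\delta_{g_1=g_2^{-1}}}{\bar{d}^N}.
\]
Specializing the same formula to $c=0$ (where $\jj^{-c}=e$), one obtains the identical expression
\[
\langle \phi^0_{g_1},\phi^0_{g_2}\rangle^{0,\bs} = \exp\Big(\sum_{j=1}^N \lfloor 1-m_j(g_1)\rfloor s^j_0\Big) \frac{\delta_{g_1=g_2^{-1}}}{\bar{d}^N}.
\]
Comparing gives $\langle i_c(\phi^0_{g_1}),i_c(\phi^0_{g_2})\rangle^{c,\bs}=\langle \phi^0_{g_1},\phi^0_{g_2}\rangle^{0,\bs}$, which is the desired compatibility.

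There is no substantive obstacle; the lemma is essentially a bookkeeping observation confirming that the $\jj^{c}$-shift in \eqref{e:twistedinvariant} was designed exactly so that the three-point pairings on different $W^c$ state spaces become canonically identified. The non-trivial input (the orbifold Riemann--Roch computation of $\chi(R\pi_*\cL_j)$ on $\sMbar_{0,3}$) has already been carried out in the proof of Lemma~\ref{l:1.10}(2), and the role of the hypothesis $cc_j<d$ is solely to kill the $\lfloor cc_j/d\rfloor$ term so the two closed-form pairings coincide verbatim.
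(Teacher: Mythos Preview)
Your proposal is correct and follows exactly the paper's approach: the paper's proof simply states that the lemma follows immediately from Lemma~\ref{l:1.10}, and your argument is precisely the unpacking of that claim via the explicit pairing formula~\eqref{e:pairing}.
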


\begin{proof}
This follows immediately from Lemma~\ref{l:1.10}.
\end{proof}

Note that in this case $i_c$ extends to an isomorphism of 
the symplectic spaces $\sV^{0, \bs} \cong \sV^{c, \bs}$.
We will use the notation
\[
J^{c, \bs} (\bt, z) = \beta_0 z + \bt + \sum_{n \geq 0} \sum_{ i \in I} \frac{1}{n!} \br{ \frac{\beta_i}{z - \psi}, \bt, \ldots, \bt }^{c, \bs}_{0, n+1} \beta^i.
\] 
for the $J$-function of the twisted theories.

\subsection{Untwisted invariants}
We will first consider the case $s_k = 0$ for all $k$ (Lemma~\ref{l:1.10}).  
By Equation~\eqref{e:Jgens}, the genus zero part of the theory (i.e. the Lagrangian cone) is determined by the $J$-function.  
Let 
$\bt_c := \sum_{g \in G} t^{g}_{c} \phi^c_{ g}$ denote a point in the state space.
The $J$-function $J^{c, \bo}(\bt, z)$ may be directly calculated. 

\begin{lemma} \label{l:jc0}
\begin{align*}
 J^{c, \bo}(\bt^{\bo}_c, z) = \sum_{\{ a_g \geq 0\}_{g \in G}} 
   z^{ 1- \sum a_g}\prod_{g \in G}\frac{ (t^g_{c})^{a_g}}{a_g!} \phi^c_{ \prod g^{a_g}}. 
\end{align*}
\end{lemma}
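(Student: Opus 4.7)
The plan is to unfold the $J$-function from its definition, use dimensional reasons to single out one $\psi$-class per correlator, identify the dual basis from the explicit untwisted pairing, and reduce everything to a single stacky integral computation.

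Expanding $\tfrac{1}{z-\psi}=\sum_{k\geq 0}z^{-k-1}\psi^k$ and observing that $\overline{\sM}_{0,n+1}$ has dimension $n-2$, so only the term $k=n-2$ contributes, one rewrites
\[
J^{c,\bo}(\bt_c,z)=\phi^c_e\,z+\bt_c+\sum_{n\geq 2}\sum_{h\in G}\frac{z^{1-n}}{n!}\br{\psi^{n-2}\phi^c_h,\bt_c,\ldots,\bt_c}^{c,\bo}_{0,n+1}(\phi^c_h)^*.
\]
By Lemma~\ref{l:1.10}(1) the untwisted pairing on $H^{c,\bo}$ is $\br{\phi^c_{h_1},\phi^c_{h_2}}^{c,\bo}=\bar d^{-N}\delta_{h_1h_2=\jj^{-2c}}$, so the basis dual to $\{\phi^c_h\}_{h\in G}$ is $(\phi^c_h)^*=\bar d^N\phi^c_{h^{-1}\jj^{-2c}}$.

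The central step is the evaluation, for $g_1,\ldots,g_n\in G$, of
\[
I_n(h;g_1,\ldots,g_n):=\int_{W^c_{0,n+1,G}(h\jj^c,g_1\jj^c,\ldots,g_n\jj^c)}\psi_1^{n-2}.
\]
The numerical condition \eqref{e:numcon}, together with $m_j(\jj^c)=cc_j/d$ (using $cc_j<d$), forces $h\prod_i g_i=\jj^{-2c}$ for the moduli space to be nonempty; assuming this, I claim $I_n=\bar d^{-N}$. To prove the claim, I factor the forgetful morphism through $\overline{\sM}^{\bar d}_{0,n+1}\to\overline{\sM}_{0,n+1}$, pull $\psi_1$ back from the coarse moduli, and apply $\int_{\overline{\sM}_{0,n+1}}\psi_1^{n-2}=1$; it then suffices to show the composite has stacky degree $\bar d^{-N}$. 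This follows from a computation analogous to the remark after Lemma~\ref{l:2.2}: over a smooth genus-zero $\bar d$-stable curve with the prescribed isotropy, the orbifold $\on{Pic}^0$ is trivial, so each line bundle $\cL_j$ satisfying $\cL_j^{\otimes\bar d}\cong\omega_{\log}^{\otimes c\bar c_j}$ is unique up to isomorphism with automorphism group $\mu_{\bar d}$, and the $W^c$-relations of Definition~\ref{d:Wstr} are realized by compatible isomorphisms that do not further cut down these automorphisms, so the fiber is a $B\mu_{\bar d}^N$-gerbe.

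Substituting $I_n=\bar d^{-N}\delta_{h\prod g_i=\jj^{-2c}}$ and the dual basis formula into the earlier expansion yields
\[
J^{c,\bo}(\bt_c,z)=\sum_{n\geq 0}\frac{z^{1-n}}{n!}\sum_{g_1,\ldots,g_n\in G}\prod_{i=1}^n t^{g_i}_c\cdot\phi^c_{g_1\cdots g_n},
\]
and grouping summands by the exponent data $a_g:=\#\{i:g_i=g\}$---with the multinomial count $n!/\prod_g a_g!$ of orderings and the identity $\prod_i g_i=\prod_g g^{a_g}$ (valid since $G$ is abelian)---recovers the stated formula. The main obstacle is the stacky degree calculation, since the explicit gerbe statement for $W^c$ with $c\neq 0$ is not quite in the text; a cleaner indirect route is to verify the base case $n=2$ directly (as $\overline{\sM}_{0,3}$ is a point, $I_2$ is simply the stacky mass of the finite groupoid $W^c_{0,3,G}(\ldots)$) and then use a reconstruction argument based on the fact that the untwisted three-point invariants compute the group-algebra product $\phi^c_{g}\star\phi^c_{g'}=\phi^c_{gg'}$, which together with the Frobenius and string/dilaton structure determines the entire genus-zero theory.
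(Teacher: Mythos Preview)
Your proof is correct and follows essentially the same approach as the paper: push forward along the forgetful map to $\sMbar_{0,n+1}$, use the degree $\bar d^{-N}$ of this map together with $\int_{\sMbar_{0,n+1}}\psi_1^{n-2}=1$, invoke the numerical condition~\eqref{e:numcon} for non-emptiness, and identify the dual basis via Lemma~\ref{l:1.10}. You are simply more explicit in places (the gerbe justification for the degree, the multinomial regrouping), and your closing ``alternative reconstruction'' paragraph is unnecessary since the direct degree computation already suffices.
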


\begin{proof} 
By pushing forward via the forgetful morphism, the $\bo$-twisted invariants may be calculated over $\sMbar_{0, n}$.  
The moduli space $W^c_{0, n, G}(g_1 \jj^{c},  \ldots, g_n\jj^{c})$ is nonempty if 
$\prod_{i = 1}^n g_i = \jj^{-2c}$ by \eqref{e:numcon}.  In this case the degree of the map to $\sMbar_{0, n}$ is $1/{\bar d}^N$.
Thus by \eqref{e:1.0.3}, and the standard formula for integrals of $\psi$-classes over $\sMbar_{0, n}$, the invariant $\br{ \psi^{a} \phi^c_{ g_0}, \phi^c_{g_1},  \ldots,  \phi^c_{  g_n} }^c_{0,n+1}$ is equal to $1 / \bar{d}^N$  exactly when $g_0 =  \jj^{-2c} \prod_{i = 1}^n g_i^{-1}$ and $a = n - 2$ and is zero otherwise.  By Lemma~\ref{l:1.10}, the dual of $\phi_{g_0}$ is $\bar{d}^N \phi_{g_0^{-1}\jj^{-2c}}$.  Thus 
\begin{align*}  \sum_{\substack{a \geq 0 \\ g_0 \in G}} z^{-a - 1}\br{ \psi^{a} \phi^c_{ g_0}, \phi^c_{g_1},  \ldots,  \phi^c_{  g_n} }^c_{0,n+1} \phi^{c, g_0} 
= z^{1 - n} \phi^c_{\prod_{i = 1}^n g_i }.
\end{align*}  The lemma follows.
\end{proof}

\begin{lemma} \label{l:4.3} The transformation $i_c$ identifies derivatives of the two $J$-functions:
\[
 {i_c} \left( z \frac{\partial}{\partial t^{\jj^c g' }_{0}}  J^{0,\bo}(\bt^0, z) \right)
 =  z \frac{\partial}{\partial t^{g'}_{c}}  J^{c, \bo}(\bt^c, z)
 |_{t^g_{c} = t^g_{0}}.
\]
In particular, 
\[
 {i_c} \left( z \frac{\partial}{\partial t^{ \jj^c}_{0}}  J^{0, \bo}(\bt^0, z) \right)
 = J^{c, \bo}(\bt^c, z)
 |_{t^g_{c} = t^g_{0}}.
\]
\end{lemma}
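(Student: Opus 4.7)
The plan is to prove the identity by a direct series calculation, using the closed-form expression for $J^{c,\bo}$ supplied by Lemma~\ref{l:jc0}.  The isomorphism $i_c$ acts by the simple relabeling $\phi^0_g \mapsto \phi^c_{g\jj^{-c}}$, so once both sides have been differentiated and expanded, the identity will reduce to matching coefficients of two explicit formal series in the $t$-variables, after the specialization $t^g_c=t^g_0$.

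Concretely, I would differentiate the formula of Lemma~\ref{l:jc0} on each side.  Applying $z\,\partial/\partial t^{g'}_c$ to $J^{c,\bo}(\bt^c,z)$ and re-indexing $a_{g'}\mapsto a_{g'}+1$ gives
\[
\sum_{\{b_g\}_{g\in G}} z^{1-\sum b_g}\prod_{g\in G}\frac{(t^g_c)^{b_g}}{b_g!}\,\phi^c_{g'\cdot \prod g^{b_g}},
\]
since the explicit $z$ cancels the extra $-1$ coming from the shift and the differentiation inserts one additional factor of $g'$ in the sector label.  The same re-indexing applied to $z\,\partial/\partial t^{\jj^c g'}_0 J^{0,\bo}(\bt^0,z)$ produces the analogous series but with $t^g_c$ replaced by $t^g_0$ and with sector labels $\phi^0_{\jj^c g'\cdot \prod g^{b_g}}$.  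Applying $i_c$ sends each such label to $\phi^c_{g'\cdot \prod g^{b_g}}$, matching the $c$-side term by term after $t^g_c=t^g_0$.  This establishes the general identity.

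For the second (unparametrized) display, which is the case $g'=e$, I would either read the result off of Lemma~\ref{l:jc0} directly — the re-indexed series at $g'=e$ is literally the formula for $J^{c,\bo}$ — or observe that the right-hand side $z\,\partial/\partial t^e_c\,J^{c,\bo}(\bt^c,z)$ equals $J^{c,\bo}(\bt^c,z)$ by the string equation recorded in Section~\ref{ss:sympform}, since $\phi^c_e$ is the unit of the $c$-twisted state space.  I do not expect any substantive obstacle; the only delicate point is tracking the shift-by-$\jj^c$ between the $c=0$ and $c$-twisted sector labels, and the index $\jj^c g'$ appearing on the left-hand side is precisely what absorbs that shift.
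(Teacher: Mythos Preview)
Your proposal is correct and follows essentially the same approach as the paper: both differentiate the explicit formula of Lemma~\ref{l:jc0}, re-index the resulting sum, apply $i_c$ to relabel $\phi^0_{\jj^c g'\prod g^{b_g}}\mapsto \phi^c_{g'\prod g^{b_g}}$, and match term by term under $t^g_c=t^g_0$. The paper likewise deduces the second display from the string equation $z\,\partial/\partial t^e_c\,J^{c,\bo}=J^{c,\bo}$.
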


\begin{proof}
%
%
Observe that 
\[
\begin{split}
  &i_c\left(z \frac{\partial}{\partial t^{\jj^cg '}_0} J^{0, \bo}(\bt^0, z)   \right) \\
 =  &i_c \left(\phi^0_{ \jj^c g '}z + \sum_{g \in G} t^g_0 \phi^0_{ \jj^cg 'g}+ 
   \sum_{\{ a_g \geq 0\}_{g \in G}} z^{\sum 1- a_g}
   \prod_{g \in G}\frac{ (t_0^g)^{a_g} }{ a_g! } \phi^0_{ \jj^c g' \prod_g g^{a_g}} \right)
\\ 
 = &\phi^c_{g '}z + \sum_{g \in G} t^g_0 \phi^c_{ g' g}+ 
   \sum_{\{ a_g \geq 0\}_{g \in G}} z^{\sum 1- a_g}
   \prod_{g \in G}\frac{ (t_0^g)^{a_g} }{ a_g! } \phi^c_{  g' \prod_g g^{a_g}} 
\\
  =& z \frac{\partial}{\partial t^{g'}_c} J^{c, \bo}(\bt^c, z)|_{ \{t^{g}_c=t^{g}_0\}} .   \\
\end{split}
\]
The second statement then follows from the string equation:
\[
z \frac{\partial}{\partial t^{e}_{c}}  J^{c, \bo}(\bt^c, z) = J^{c, \bo}(\bt^c, z).
\]

\end{proof}

\begin{proposition}\label{p:untwistedidentification}
Under the isomorphism $i_c: \sV^{0, \bo} \to \sV^{c, \bo}$, the Lagrangian cones $\sL^{0, \bo}$ and $\sL^{c, \bo}$ are identified.
\end{proposition}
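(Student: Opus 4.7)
The plan is to reduce the identification of cones to the matching of $J$-functions together with their first derivatives, which is exactly what Lemma~\ref{l:4.3} supplies.

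First, I recall from \eqref{e:generic} that each Lagrangian cone $\sL^\square$ is swept out by rulings of the form $zL_{J^\square(\bt,-z)}$, and by \eqref{e:Jgens2} each such ruling is spanned by the partial derivatives $z\partial_{t^i}J^\square(\bt,-z)$ as $i$ ranges over a basis of $H^\square$. Therefore, to prove $\sL^{c,\bo}=i_c(\sL^{0,\bo})$ it suffices to identify a generic slice of $\sL^{c,\bo}$ with the $i_c$-image of a suitable family of points of $\sL^{0,\bo}$, and simultaneously to match the derivatives spanning the rulings at corresponding points.

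Second, I make the change of variables $t^g_c:=t^g_0$ for every $g\in G$ and invoke Lemma~\ref{l:4.3}. The second part of that lemma reads
\[
J^{c,\bo}(\bt^c,z)\big|_{t^g_c=t^g_0}=i_c\!\left(z\,\partial_{t^{\jj^c}_0}J^{0,\bo}(\bt^0,z)\right),
\]
which exhibits the generic slice $\bt\mapsto J^{c,\bo}(\bt,-z)$ of $\sL^{c,\bo}$ as the $i_c$-image of a subfamily inside $\sL^{0,\bo}$, since $z\partial_{t^{\jj^c}_0}J^{0,\bo}(\bt^0,-z)\in zL_{J^{0,\bo}(\bt^0,-z)}\subset\sL^{0,\bo}$. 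The first part of the lemma then asserts, for every $g'\in G$, that $z\partial_{t^{g'}_c}J^{c,\bo}(\bt^c,z)=i_c(z\partial_{t^{\jj^c g'}_0}J^{0,\bo}(\bt^0,z))$; since left multiplication by $\jj^c$ is a bijection of $G$, as $g'$ ranges over $G$ the index $\jj^c g'$ ranges over all of $G$, so the $|G|$ derivatives spanning $zL_{J^{c,\bo}(\bt^c,-z)}$ are precisely the $i_c$-images of the $|G|$ derivatives spanning $zL_{J^{0,\bo}(\bt^0,-z)}$.

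Combining these two observations, $i_c$ carries the ruling $zL_{J^{0,\bo}(\bt^0,-z)}$ onto the ruling $zL_{J^{c,\bo}(\bt^c,-z)}$ for every $\bt$. Taking the union over $\bt$ and invoking \eqref{e:generic} one last time yields $i_c(\sL^{0,\bo})=\sL^{c,\bo}$. The only real subtlety is bookkeeping with the $\jj^c$-shifts, which is transparent once the relabeling $g\leftrightarrow\jj^c g$ is noted, together with the compatibility of $i_c$ with the symplectic forms, which is immediate from the preceding lemma identifying the inner product spaces $H^{0,\bo}$ and $H^{c,\bo}$. Hence the proof is a direct assembly of the cone description of Givental's formalism with Lemma~\ref{l:4.3}, and no genuine obstacle arises.
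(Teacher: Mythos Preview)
Your argument is correct and follows essentially the same route as the paper: both use \eqref{e:generic} and \eqref{e:Jgens2} to reduce the identification of cones to matching the full set of derivatives $\{z\partial_{t^{g'}}J\}_{g'\in G}$, and then invoke Lemma~\ref{l:4.3} (together with the bijection $g'\mapsto \jj^c g'$ on $G$) to effect that match. Your version is simply more explicit about the bookkeeping and about why $i_c$ is symplectic.
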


\begin{proof}
%
%

By~\eqref{e:generic} and~\eqref{e:Jgens2}, the Lagrangian cone $\sL^{c, \bo}$ is spanned by the set of derivatives $\{z \partial/\partial t^{g'}_{c} J^{c, \bo}(\bt^c, z)\}_{g' \in G}$. From the above lemma we see that $i_c$ identifies the set of derivatives of $J^{0, \bo}$ with those of $J^{c, \bo}$ and therefore identifies the two Lagrangian cones.
%
%
%
\end{proof}

\subsection{The \mlk theory correspondence}

We are now able to state a relationship between the twisted theories corresponding to different powers $c$ of $\omega_{\cC, \on{log}}$.  As it relates \emph{multiple powers of the log-canonical}, we call it
the \mlk correspondence.
\begin{theorem}[The \mlk correspondence]\label{c:cones}
The isomorphism $i_c: \sV^{0, \bs} \to \sV^{c, \bs}$ identifies the $\bs$-twisted Lagrangian cones $\sL^{0, \bs}$ and $\sL^{c, \bs}$.
\end{theorem}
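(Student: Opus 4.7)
The plan is to reduce the theorem to combining two results already in hand: Theorem~\ref{t:symplectictransformation}(2), which gives $\sL^{c,\bs} = \Delta^c \sL^{c,\bo}$ for each $c$, and Proposition~\ref{p:untwistedidentification}, which gives $i_c(\sL^{0,\bo}) = \sL^{c,\bo}$. Concretely, I would verify that the square
\[
\begin{tikzcd}
\sV^{0,\bo} \arrow{r}{i_c} \arrow{d}{\Delta^0} & \sV^{c,\bo} \arrow{d}{\Delta^c} \\
\sV^{0,\bs} \arrow{r}{i_c} & \sV^{c,\bs}
\end{tikzcd}
\]
commutes, i.e.\ $\Delta^c \circ i_c = i_c \circ \Delta^0$ as symplectic transformations. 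Once this is established the chain
\[
i_c\bigl(\sL^{0,\bs}\bigr) = i_c\bigl(\Delta^0 \sL^{0,\bo}\bigr) = \Delta^c\bigl(i_c \sL^{0,\bo}\bigr) = \Delta^c \sL^{c,\bo} = \sL^{c,\bs}
\]
gives the result immediately.

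The verification that the square commutes is a direct check on basis vectors. Each $\Delta^c$ acts diagonally in the basis $\{\phi^c_g\}_{g\in G}$, multiplying $\phi^c_g$ by the scalar series
\[
M^c_g(z) := \prod_{j=1}^N \exp\Bigl( \sum_{k\geq 0} s^j_k \tfrac{B_{k+1}(m_j(\phi^c_g))}{(k+1)!}z^k \Bigr).
\]
Since $i_c$ sends $\phi^0_g \mapsto \phi^c_{g\jj^{-c}}$, the commutativity reduces to the equality $M^0_g(z) = M^c_{g\jj^{-c}}(z)$ for every $g \in G$. By our shifting convention, $m_j(\phi^0_g) = m_j(g \jj^0) = m_j(g)$ and $m_j(\phi^c_{g\jj^{-c}}) = m_j\bigl((g\jj^{-c})\jj^c\bigr) = m_j(g)$, so the two scalars agree term by term. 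The fact that $i_c$ is symplectic is already noted in the excerpt (it is an isomorphism of inner product spaces, and extends to $\sV$), so nothing further is needed on that front.

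The main obstacle, such as it is, is bookkeeping of the $\jj^c$-shifts: one must check that the shift built into the definition of $\bs$-twisted invariants (which controls the multiplicities $m_j(\phi^c_g) = m_j(g\jj^c)$ appearing in the Bernoulli polynomials of Theorem~\ref{t:symplectictransformation}) is exactly absorbed by the definition $i_c(\phi^0_g) = \phi^c_{g\jj^{-c}}$. Once this compatibility of shifts is observed, the theorem follows with no new computation; in particular the hypothesis $c c_j < d$ (already standing in Section~\ref{s:4.3}) is what ensures $i_c$ is defined as an isometry, so the argument is valid precisely in the range where Theorem~\ref{t:symplectictransformation} applies on both sides.
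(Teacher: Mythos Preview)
Your proposal is correct and matches the paper's own proof essentially line for line: both establish $\Delta^c \circ i_c = i_c \circ \Delta^0$ from the multiplicity identity $m_j(\phi^c_{g\jj^{-c}}) = m_j(g) = m_j(\phi^0_g)$, then chain this with Proposition~\ref{p:untwistedidentification} and Theorem~\ref{t:symplectictransformation}(2) exactly as you wrote. There is nothing to add.
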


\begin{proof}
Note that $m_j(\phi^c_{ g}) = m_j( g\jj^c) = m_j(\phi^0_{ g\jj^c})$.  Thus the action of $\Delta^0$ on the subspace spanned by $\phi^0_{ g\jj^c}$ is the same as the action of $\Delta^c$ on the subspace spanned by $i_c(\phi^0_{ g\jj^c}) = \phi^c_g$.  In other words, under the identification given by $i_c$, $\Delta^0$ and $\Delta^c$ are the same symplectic transformation.  By Proposition~\ref{p:untwistedidentification} identifying the untwisted Lagrangian cones, we see that $i_c \sL^{0, \bs} = i_c \Delta^0 \sL^{0, \bo} = \Delta^c \sL^{c, \bo} = \sL^{c, \bs}$.
\end{proof}

\begin{corollary}\label{c:Jrels}
$$z \frac{\partial}{\partial t_0^{\jj^c}}i_{c}\left( J^{0, \bs}(\bt_0, z) \right) = J^{c, \bs}(\bt_c, z)$$ 
%
%
where the change of variables is given by   
$$\langle \bt_c, \phi^c_g\rangle^{c, \bs} = 
\frac{\partial^2}{\partial t^{g\jj^c}_0 \partial t^{\jj^c}_0} \cF_0^{0, \bs}(\bt_0, 0, 0, \ldots) 
.$$
\end{corollary}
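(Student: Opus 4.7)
The plan is to deduce the corollary from Theorem~\ref{c:cones} (the \mlk correspondence) combined with the standard characterization of $J^{c,\bs}(\bt_c, z)$ as the unique point of $\sL^{c,\bs}$ whose projection onto the positive part $H^{c,\bs}[z]$ takes the form $z\phi^c_e + \bt_c$. This is the $\bs$-twisted analogue of the route from Lemma~\ref{l:4.3} to Proposition~\ref{p:untwistedidentification}.

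First, by the overruled property~\eqref{e:overruled}, $z\cdot \partial J^{0,\bs}/\partial t_0^{\jj^c}$ lies on $\sL^{0,\bs}$. Since $i_c$ is linear and independent of $\bt_0$, the quantity on the left-hand side of the statement equals $i_c\!\left(z\,\partial_{t_0^{\jj^c}} J^{0,\bs}(\bt_0, z)\right)$, which by Theorem~\ref{c:cones} lies on $\sL^{c,\bs}$. Next, I would Taylor-expand: using $1/(z-\psi) = 1/z + O(z^{-2})$,
\[
 \frac{\partial}{\partial t_0^{\jj^c}} J^{0,\bs}(\bt_0,z) \;=\; \phi^0_{\jj^c} + \frac{A(\bt_0)}{z} + O(z^{-2}), \qquad A(\bt_0) := \sum_i \frac{\partial^2 \cF_0^{0,\bs}(\bt_0,0,\ldots)}{\partial t_0^{\jj^c}\,\partial t_0^{\beta_i}}\,\beta^i,
\]
where $\{\beta_i\}=\{\phi^0_{g_i}\}$ is the basis of $H^{0,\bs}$ and $\{\beta^i\}$ its $\langle\cdot,\cdot\rangle^{0,\bs}$-dual. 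Since $i_c(\phi^0_{\jj^c}) = \phi^c_e$, the image $F(\bt_0, z) := i_c(z\,\partial_{t_0^{\jj^c}} J^{0,\bs})$ equals $z\phi^c_e + i_c(A(\bt_0)) + O(z^{-1})$, and the uniqueness of the $J$-slice of the Lagrangian cone then forces $F = J^{c,\bs}(\bt_c, z)$ with $\bt_c := i_c(A(\bt_0))$.

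To match this $\bt_c$ with the change of variables in the statement, the key observation is that $i_c$ is an \emph{isometry} of the $\bs$-twisted pairings. Indeed, Lemma~\ref{l:1.10}(2)---whose hypothesis $cc_j<d$ is exactly what is assumed---shows that both $\langle\phi^0_{g_1},\phi^0_{g_2}\rangle^{0,\bs}$ and $\langle\phi^c_{g_1\jj^{-c}},\phi^c_{g_2\jj^{-c}}\rangle^{c,\bs}$ evaluate to the same expression. Using $i_c(\phi^0_{g\jj^c})=\phi^c_g$ together with the dual-basis identity $\sum_i \langle v,\beta_i\rangle^{0,\bs}\,\beta^i = v$, one concludes
\[
 \langle \bt_c,\phi^c_g\rangle^{c,\bs} \;=\; \langle A(\bt_0), \phi^0_{g\jj^c}\rangle^{0,\bs} \;=\; \frac{\partial^2 \cF_0^{0,\bs}(\bt_0,0,\ldots)}{\partial t_0^{g\jj^c}\,\partial t_0^{\jj^c}},
\]
which is the claim.

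I expect the main obstacle to sit in this last step: the bookkeeping needed to verify the isometry of the twisted pairings under $i_c$ and to transport dual bases cleanly through it. Both points ultimately reduce to Lemma~\ref{l:1.10}(2), so the difficulty is more cosmetic than substantive, but it is the unique place where the hypothesis $cc_j<d$ enters the argument.
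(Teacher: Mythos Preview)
Your proof is correct and follows essentially the same route as the paper. Both arguments use Theorem~\ref{c:cones} to transport $z\,\partial_{t_0^{\jj^c}} J^{0,\bs}$ onto $\sL^{c,\bs}$ and then read off the identification with $J^{c,\bs}(\bt_c,z)$ by matching the $z^1$ and $z^0$ coefficients; the paper phrases this as expressing $J^{c,\bs}$ as an a priori unknown combination $z\sum_g C^g(\bt_0,z)\,\partial_{t_0^g}\, i_c(J^{0,\bs})$ and then solving $C^{\jj^c}=1$, $C^g=0$ otherwise, whereas you invoke directly the uniqueness of the $J$-slice---but this is only a cosmetic difference. Your explicit verification that $i_c$ is an isometry (and hence carries dual bases to dual bases) makes transparent a point the paper leaves implicit when it says ``equating coefficients of $z^0$.''
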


\begin{proof}
By \eqref{e:Jgens}, and the above Corollary,  the ruling of the Lagrangian cone $\sL^{c, \bs}$ at $J^{c, \bs}(\bt_c, -z)$ is in fact spanned by the derivatives of $J^{0, \bs}(\bt_0, -z)$.  Thus we have 
\[z \sum_{g \in G} C^g(\bt_0,z) \frac{\partial}{\partial t^g_0} i_{c}\left( J^{0, \bs}(\bt_0, -z) \right) = J^{c, \bs}(\bt_c, -z)\]
for some functions $C^g( \bt_0, z)$ and some change of variables between $\bt_c$ and $\bt_0$.  Equating coefficients of $z$ on either side yields $C^{\jj^c}(\bt_0, z) = 1$ and all other $C^g(\bt_0, z)$ equal zero.  The change of variables is obtained by then equating coefficients of $z^0$.
\end{proof}

\begin{remark}
The above results should be viewed as akin to quantum Serre duality as given in \cite{G1}, \cite{CG} and \cite{Ts}, and summarized here in Theorem~\ref{t:qsd}.  Indeed comparing Corollary~\ref{c:Jrels} with Theorem~\ref{t:qsd}, one sees that they take an almost identical form.
\end{remark}

\begin{remark}
By applying Teleman's proof (\cite{Te}) of Givental's conjecture  for semi-simple Frobenius manifolds (\cite{G4}) to the above untwisted theories, one may deduce a higher genus correspondence between untwisted theories.
Combining this with Theorem~\ref{t:symplectictransformation}, one obtains a higher genus analogue of the correspondence of Theorem~\ref{c:cones}.  This will relate the total genus descendant potentials of the twisted theories via a quantized symplectic operator.  We leave the details to the reader.
\end{remark}
%
%

\subsection{Implications to local GW and FJRW theory}
In this subsection, we apply the \mlk correspondence to prove a relationship between  local GW theory and the FJRW theory.

Let $(Q, G)$ be a Landau--Ginzburg pair where $Q$ is Fermat.  
Recall that in this case $\bar{d} = d$
and we have the relationship mentioned above (Section~\ref{s:rtoi}) between the $\bs$-twisted theories and both local GW theory and FJRW theory.  
In this section we fix the specialization of the $\bs$ parameter to 
\begin{equation}\label{e:spec}
s^j_0 = -\ln(-\lambda_j)\text{, } s^j_k = (k-1)!/\lambda_j^k \text{ for } 1 \leq j \leq N, \: k > 0.
\end{equation}  Recall that under this specialization $\bs(V) = 1/e_{\CC^*}(V)$. We will still refer to these as $\bs$-twisted invariants, where it is understood that we have specialized the $s^j_k$ as above.

\subsubsection{Local GW theory}
In the case $c=0$, specializing the $\bs$-twisted $W^c$ invariants as above recovers the local GW invariants of 
$[\CC^N/G]$ after multiplying by a factor of $d^N$.  
The pairing also differs by this factor.  Consider the symplectic transformation 
\[
 \sV^{0, \bs} \to \sV^{[\CC^N/G]}
\]
induced by
\[
 \phi^0_{ g} \mapsto \frac 1 {\sqrt{d^N}} \ii_g.
\]
Under this transformation, we have the equality 
\[
\bigg\langle \frac{\phi^0_{ g}}{z - \psi},  \phi^0_{ g_1},\ldots, \phi^0_{ g_n} \bigg\rangle_{0, n+1}^{0, \bs} \phi^{0, g} = \frac 1 {\sqrt{d^N}} \bigg\langle   \frac{\ii_g}{z - \psi},  \ii_{g_1},\ldots, \ii_{g_n} \bigg\rangle_{0, n+1}^{[\CC^N/G]} \ii^g,
\]
where upper indices denote dual elements with respect to the given basis.  Therefore the respective $J$-functions also differ by an overall factor.  
We obtain the following lemma.

\begin{lemma}  

After specializing the $s^j_k$ as in \eqref{e:spec},
  \[\sL^{0, \bs} = \sL^{[\CC^N/G]}\]
  under the identification  $\phi^0_{ g} \mapsto \frac 1 {\sqrt{d^N}} \ii_g$.
\end{lemma}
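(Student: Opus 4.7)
The plan is to use the displayed correlator identity immediately preceding the lemma, which expresses the $J$-function of the twisted $W^0$ theory as a scalar multiple of the local Gromov--Witten $J$-function, and then to invoke scale invariance of Givental's Lagrangian cones to conclude.

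First I would verify that $T:\phi^0_g\mapsto \tfrac{1}{\sqrt{d^N}}\,\ii_g$ is a symplectic isomorphism $\sV^{0,\bs}\to \sV^{[\CC^N/G]}$. Since the symplectic form is built from the pairing on the state space, it suffices to check
\[
\bigl\langle T\phi^0_{g_1}, T\phi^0_{g_2}\bigr\rangle^{[\CC^N/G]} = \bigl\langle \phi^0_{g_1}, \phi^0_{g_2}\bigr\rangle^{0,\bs},
\]
which is immediate from Corollary~\ref{c:2.4} applied to the three-point pairing: the factor of $d^N$ coming from Corollary~\ref{c:2.4} is exactly canceled by the two factors of $1/\sqrt{d^N}$ from $T$. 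As a consequence, the dual basis transforms contragrediently as $T(\phi^{0,g})=\sqrt{d^N}\,\ii^g$.

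Next I would apply $T$ term by term to $J^{0,\bs}(\bt,z)$, combining Corollary~\ref{c:2.4} (which rescales each correlator by $d^N$) with $T(\phi^{0,g})=\sqrt{d^N}\,\ii^g$. The powers of $\sqrt{d^N}$ recombine to give
\[
T\bigl(J^{0,\bs}(\bt,z)\bigr) = \tfrac{1}{\sqrt{d^N}}\,J^{[\CC^N/G]}(\bt,z)
\]
under the natural matching of coordinates $\bt=\sum_g t^g\phi^0_g \leftrightarrow \bt=\sum_g t^g\ii_g$. This is just a reformulation of the correlator identity displayed immediately before the lemma.

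Finally I would invoke scale invariance of Givental's cone: the description $\sL^{[\CC^N/G]}=\{zL_{f(\bt)}\}$ from Section~\ref{ss:sympform} exhibits the cone as a union of linear subspaces, so in particular $\tfrac{1}{\sqrt{d^N}}\,J^{[\CC^N/G]}(\bt,z)$ lies in $\sL^{[\CC^N/G]}$ for every $\bt$. Thus $T$ maps a full generic transverse slice of $\sL^{0,\bs}$ into $\sL^{[\CC^N/G]}$, and since any such slice reconstructs the entire cone via \eqref{e:generic}, the two Lagrangian cones $T(\sL^{0,\bs})$ and $\sL^{[\CC^N/G]}$ must agree. The only delicate point is bookkeeping the $\sqrt{d^N}$ factors; the essential conceptual observation is that the awkward scalar $1/\sqrt{d^N}$ relating the two $J$-functions is invisible at the level of Lagrangian cones, because the cone is a union of $\CC$-linear subspaces.
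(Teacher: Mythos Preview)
Your proof is correct and follows essentially the same approach as the paper: both derive the identity $T(J^{0,\bs}(\bt_0,z))=\tfrac{1}{\sqrt{d^N}}\,J^{[\CC^N/G]}(\bt,z)$ from the displayed correlator relation and then conclude that the two cones agree. You simply supply more detail than the paper does, in particular the explicit check that $T$ is symplectic and the observation that the scalar $\tfrac{1}{\sqrt{d^N}}$ is absorbed by the fact that each ruling $zL_f$ is a linear subspace.
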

  
 \begin{proof}
 Let $\bt   = \sum_{g \in G} {t^g} \ii_g$.  By the above we see that 
 \begin{equation*}
 J^{0, \bs}(\bt_0, z) = \frac 1 {\sqrt{d^N}} J^{[\CC^N/G]}(\bt , z)
\end{equation*}
after the change of variables $t^g_0 = {t^g} $.  In particular they generate the same Lagrangian cone. 
 \end{proof}
 
\subsubsection{FJRW theory}  Let $\bs$ be as in \eqref{e:spec}.
A similar identification results between the $\bs$-twisted $J$-function at $c=1$ and the corresponding FJRW $J$-function.  In this case we must account for the factor of plus or minus one, determined by the parity of $\chi( \oplus_{j=1}^N \cL_j)$ (see~\eqref{e:2.2.1}).  Adjusting our specialization of $s^j_0 = -\ln(-\lambda_j)$ to ${s^j_0} ' = -\ln(-\lambda_j) + \pi \sqrt{-1}$ as in~\eqref{e:bs2} will have the effect of modifying our twisted invariants by this sign.  This alters the symplectic transformation of Theorem~\ref{t:symplectictransformation} by 
\begin{align*} & \bigoplus_{g \in G} \prod_{j = 1}^N \exp \left( B_1(m_j(\phi^1_{g})) \pi \sqrt{-1} \right)\\
=& \bigoplus_{g \in G}  \exp \left( B_1\left(\sum_{j=1}^N m_j(\phi^1_{g})\right) \pi \sqrt{-1} \right)\\
\sim&   \bigoplus_{g \in G} \exp \left( \sum_{j=1}^N m_j(g\jj) \pi \sqrt{-1} \right)\\
\sim & \bigoplus_{g \in G} (-1)^{\sum_{j=1}^N m_j(g\jj)}
\end{align*}
where $\sim$ means equal up to a constant factor (which will not effect the Lagrangian cone).  Define \[ \Delta ' := \bigoplus_{g \in G} (-1)^{\sum_{j=1}^N m_j(g\jj)}.\]

\begin{lemma}\label{l:4.15}  Given the specialization of $s^j_k$ as in \eqref{e:spec}, let $F^{1, \bs}$ be a point on $\sL^{1, \bs}$ such that the nonequivariant limit $\lim_{\lambda \mapsto 0} F^{1, \bs}$ is both well defined and supported in the subset $\left( \oplus_{g, \in \hat G} \CC \cdot \phi^1_g\right)[z] \oplus \sV^{1, \bs}_{-} \subset
\sV^{1, \bs}$.
Then $\lim_{\lambda \mapsto 0} \Delta '  ( F^{1, \bs})$ lies in $\sL^{(Q,G)}$ under the identification
$ \phi^1_{ g} \mapsto \frac 1 {\sqrt{d^N}} \varphi_g$.
\end{lemma}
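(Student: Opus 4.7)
The plan is to leverage the two main tools developed earlier in the paper: Theorem~\ref{t:symplectictransformation} (to translate between specializations $\bs$ and $\bs'$ at the level of Lagrangian cones) and Corollary~\ref{c:2.6} (to match the nonequivariant limit of the $\bs'$-twisted $W^{1}$ invariants with FJRW invariants on the narrow sector). The structure of the argument mirrors the analogous local-GW statement proved just before this lemma.

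First I would identify $\sL^{1,\bs'}$ with $\Delta' \sL^{1,\bs}$. The specializations $\bs$ and $\bs'$ differ only in $s^j_0$, and the discussion preceding the lemma shows that this shift alters the Givental operator $\Delta^{1}$ of Theorem~\ref{t:symplectictransformation} by precomposition with $\Delta'$, up to a scalar factor (which comes from the constant part $B_{1}$-term and does not affect a cone). Applying Theorem~\ref{t:symplectictransformation}(2) at both specializations therefore gives $\sL^{1,\bs'}=\Delta'\,\sL^{1,\bs}$. In particular $\Delta'(F^{1,\bs})\in \sL^{1,\bs'}$. Because $\Delta'$ is diagonal in the basis $\{\phi^{1}_{g}\}_{g\in G}$ with entries $\pm 1$ that are independent of $\lambda$, the hypotheses on $F^{1,\bs}$ transfer unchanged to $\Delta'(F^{1,\bs})$: its nonequivariant limit exists, and its polynomial-in-$z$ part is supported in $\oplus_{g\in\hat G}\CC\cdot \phi^{1}_{g}$.

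Next I would identify the nonequivariant limit with a point of $\sL^{(Q,G)}$. Using the description \eqref{e:conedescr} and the fact that the positive-$z$ part of a point on $\sL^{1,\bs'}$ determines the whole point through $\bs'$-twisted genus-zero correlators, I would write $\Delta'(F^{1,\bs})$ in the form \eqref{e:conedescr} with parameter $\bt\in H^{1,\bs'}$ supported in narrow sectors, and then send $\lambda\to 0$ term by term. By Corollary~\ref{c:2.6}, each correlator with all narrow insertions has a well-defined limit equal to $\frac{1}{d^{N}}$ times the corresponding FJRW correlator. For the dual output $\phi^{1,g}$ appearing in the negative-$z$ part, contributions from $g\notin\hat G$ (i.e.\ a single broad insertion) must vanish in the limit, otherwise $\lim_{\lambda\to 0}F^{1,\bs}$ would fail to exist; this is the FJRW analogue of Ramond vanishing for one broad insertion among narrow ones, and is already built into our well-definedness hypothesis. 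Under the rescaling $\phi^{1}_{g}\mapsto \frac{1}{\sqrt{d^{N}}}\varphi_{g}$, the pairing matrix changes by the factor $d^{N}$ and the surviving correlators by $\frac{1}{d^{N}}$; these factors combine exactly as in the local-GW case so that the resulting Givental-form expression is precisely a point of $\sL^{(Q,G)}$, described via \eqref{e:conedescr} in terms of FJRW correlators.

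The main obstacle is the treatment of the broad sectors in the negative-$z$ part: one has to argue that the potentially diverging broad-insertion contributions to $\Delta'(F^{1,\bs})$ actually vanish in the nonequivariant limit. The cleanest way is to observe that the hypothesis ``$\lim_{\lambda\to 0}F^{1,\bs}$ is well defined'' together with the narrow support of the input part forces exactly the vanishing that Ramond vanishing would supply in the FJRW setting; once this is established, the two Lagrangian cone descriptions match termwise and the desired inclusion $\lim_{\lambda\to 0}\Delta'(F^{1,\bs})\in\sL^{(Q,G)}$ follows.
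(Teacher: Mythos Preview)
Your overall strategy matches the paper's: pass from $\bs$ to $\bs'$ via $\Delta'$ using Theorem~\ref{t:symplectictransformation}, write the resulting point in the form~\eqref{e:conedescr}, and then identify the limit with an FJRW expression via Corollary~\ref{c:2.6}. The factor-of-$d^{N}$ bookkeeping under $\phi^1_g\mapsto \tfrac{1}{\sqrt{d^N}}\varphi_g$ is also handled as in the paper.

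The gap is in your treatment of the broad contributions in $\sV^{1,\bs'}_{-}$. You argue that these must vanish ``otherwise $\lim_{\lambda\to 0}F^{1,\bs}$ would fail to exist,'' but this is not a valid deduction: the hypothesis only says the limit is finite and that its \emph{positive-$z$} part is narrow; it places no constraint on the support of the negative-$z$ part. A broad term in $\sV^{1,\bs'}_{-}$ could perfectly well survive with a finite nonzero limit, and then the identification $\phi^1_g\mapsto \tfrac{1}{\sqrt{d^N}}\varphi_g$ (defined only for $g\in\hat G$) would not even make sense on that term. So ``well-definedness forces the vanishing'' is circular here.

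The paper closes this gap by an explicit computation rather than by appeal to the hypothesis. Using Lemma~\ref{l:1.10} with the specialization $\bs'$, the dual element satisfies
\[
\phi^{1,g}=d^{N}\prod_{j=1}^{N}\lambda_j^{\lfloor 1-m_j(g\jj)\rfloor}\,\phi^1_{g^{-1}\jj^{-2}},
\]
so for $g\notin\hat G$ the exponent of $\lambda$ is strictly positive. On the other hand, a degree count via~\eqref{e:degree} shows that the correlators $\langle\psi^a\phi^1_g,\psi^{a_1}\phi^1_{g_1},\dots,\psi^{a_n}\phi^1_{g_n}\rangle^{1,\bs'}_{0,n+1}$ with $g_1,\dots,g_n\in\hat G$ lie in $\CC[\lambda]$. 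Multiplying these two facts, every broad-output term carries a positive power of $\lambda$ and hence vanishes at $\lambda=0$. You should replace your ``forced by the hypothesis'' step with this computation; once that is done, the rest of your argument goes through exactly as in the paper.
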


\begin{proof} The symplectic transformation $\Delta '$ maps $\sL^{1, \bs}$ to $\sL^{1, \bs '}$ where ${s^j_k} ' = s^j_k$ for $k > 0$ and ${s^j_0} ' = s^j_0 + \pi \sqrt{-1}$ as in~\eqref{e:bs2}.  By \eqref{e:conedescr},  $
\lim_{\lambda \mapsto 0} \Delta '  ( F^{1, \bs})$ may be written as
\[
-z \phi^1_0 + \sum_{\substack{k \geq 0 \\ g \in \hat G}} t^g_k \phi^1_g z^k + \lim_{\lambda \mapsto 0} \sum_{\substack{a_1, \ldots , a_n,  a\geq 0 \\ g_1, \ldots , g_n \in \hat G \\ g\in G}} \frac{t^{g_1}_{a_1}\cdots t^{g_n}_{a_n}}{n!(-z)^{a+1}}\langle \psi^a\phi^1_g ,\psi^{a_1}\phi^1_{g_1},\dots,\psi^{a_n}\phi^1_{g_n}\rangle_{0, n+1}^{1, \bs '}\phi^{1,g}.
\]
Recall Lemma~\ref{l:1.10},  which with the specialization $\bs '$ implies that $\phi^{1,g} = d^N \prod_{j = 1}^N( \lambda_j)^{\lfloor 1- m_j(g \jj ) \rfloor} \phi_{g^{-1}\jj^{-2}}$.  Therefore, for $g \notin \hat G$, the power of $\lambda$ in this expression for $\phi^{1, g}$ is positive.  Note also that $g \in \hat G$ if and only if $g^{-1}\jj^{-2} \in \hat G$.  By~\eqref{e:degree}, one calculates that $\langle \psi^a\phi^1_g ,\psi^{a_1}\phi^1_{g_1},\dots,\psi^{a_n}\phi^1_{g_n}\rangle_{0, n+1}^{1, \bs '} \in \CC[\lambda]$.  Thus in the non-equivariant limit of the above expression all terms containing the insertion $\phi^1_g$ for $g \notin \hat G$ vanish.

After applying $\phi^1_{ g} \mapsto \frac 1 {\sqrt{d^N}} \varphi_g $ and recalling \eqref{e:2.2.1}, the above expression becomes
\[
 \begin{split}
 \frac{1}{\sqrt{ d^N}} \bigg(  &-z \varphi_0 +  \sum_{\substack{k \geq 0 \\ g \in \hat G}} t^g_k \varphi_g z^k \\
 &+ \sum_{\substack{a_1, \ldots , a_n,  a\geq 0 \\ g_1, \ldots , g_n, g \in \hat G}} \frac{t^{g_1}_{a_1}\cdots t^{g_n}_{a_n}}{n!(-z)^{a+1}}\langle \psi^a\varphi_g ,\psi^{a_1}\varphi_{g_1},\dots,\psi^{a_n}\varphi_{g_n}\rangle_{0, n+1}^{(Q,G)}\varphi^g \bigg),
 \end{split}
\]
which gives a point on $\sL^{(Q,G)}$.

\end{proof}
%
%

\subsubsection{The correspondence}\label{s:5.2.3}

Let $\Delta^{\circ}$ denote the symplectic transformation given by 
\begin{equation*}
\Delta^\circ: \ii_g \mapsto (-1)^{\sum_{j=1}^N m_j(g)}\varphi_{g\jj^{-1}}.
\end{equation*}  
%
The previous two lemmas together with Corollary~\ref{c:Jrels} allow us to determine the FJRW $J$-function from that of $[\CC^N/G]$.  

\begin{theorem}[the Landau--Ginzburg/local GW correspondence]\label{t:gw/fjrw}  We have the relationship \[ \lim_{\lambda \mapsto 0}\Delta^\circ \left( z \frac{\partial}{\partial t^{j}} \left( J^{[\CC^N/G]}(\bm{t}, z) \right)\right) = J^{(Q,G)}(\bm{t '}, z)\] with the substitution given by   \[\langle \bm{t '}, \varphi_g \rangle = \lim_{\lambda \mapsto 0} 
\frac{\partial^2}{\partial t^{gj} \partial t^j} \cF_0^{[\CC^N/G]}(\bm{t}, 0, 0, \ldots) 
\] for any $g \in \hat G$.
\end{theorem}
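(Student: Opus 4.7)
The plan is to chain together three already-established identifications. Starting from Corollary~\ref{c:Jrels} specialized to $c=1$, we have
\[
 z \frac{\partial}{\partial t_0^{\jj}} i_{1}\!\left( J^{0, \bs}(\bt_0, z) \right) = J^{1, \bs}(\bt_1, z),
\]
with the change of variables $\langle \bt_1, \phi^1_g\rangle^{1, \bs} = \frac{\partial^2}{\partial t^{g\jj}_0 \partial t^{\jj}_0} \cF_0^{0, \bs}(\bt_0, 0, 0, \ldots)$. The point is that this identity holds for \emph{any} specialization of the $\bs$-parameters, in particular the one in~\eqref{e:spec}. The proof then consists of transporting the left-hand side into local GW generating data and the right-hand side into FJRW generating data, and then checking that the composite symplectic transformation agrees with $\Delta^\circ$.

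First I would translate the left-hand side. Under the isomorphism $\phi^0_g \mapsto \frac{1}{\sqrt{d^N}}\ii_g$, the Lagrangian cones coincide, so $J^{0,\bs}(\bt_0, z) = \frac{1}{\sqrt{d^N}} J^{[\CC^N/G]}(\bt, z)$ after the change of variables $t^g_0 = t^g$. Hence the left-hand side of Corollary~\ref{c:Jrels} becomes $\frac{1}{\sqrt{d^N}}\, i_1\!\left(z \frac{\partial}{\partial t^{\jj}} J^{[\CC^N/G]}(\bt, z)\right)$. Under $i_1$, the basis element $\ii_g$ in the derivative gets relabeled as a twisted $c=1$ insertion at shifted sector $g\jj^{-1}$; precisely, composing $\ii_g \mapsto \sqrt{d^N}\,\phi^0_g$ with $i_1: \phi^0_g \mapsto \phi^1_{g\jj^{-1}}$ and with $\phi^1_h \mapsto \frac{1}{\sqrt{d^N}} \varphi_h$ yields the shift $\ii_g \mapsto \varphi_{g\jj^{-1}}$ appearing in $\Delta^\circ$, absent the sign.

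Next I would handle the right-hand side using Lemma~\ref{l:4.15}. The key preliminary step is to verify the hypothesis of that lemma, namely that $\lim_{\lambda \to 0} J^{1, \bs}(\bt_1, z)$ exists and is supported on narrow sectors. This support property follows because $J^{1, \bs}(\bt_1, z)$ lies in $\sL^{1,\bs}$ and every coefficient of $J^{1, \bs}$ is a specialization of a genus-zero twisted invariant whose equivariant polynomial structure, combined with the vanishing established by Lemma~\ref{l:1.10} and~\eqref{e:degree} for non-narrow sectors (where a positive power of $\lambda$ appears in the dual basis element $\phi^{1, g}$), kills the broad contributions in the non-equivariant limit. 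Granted this, Lemma~\ref{l:4.15} applies, producing a point on $\sL^{(Q,G)}$ after applying $\Delta'$ and then identifying $\phi^1_g \mapsto \frac{1}{\sqrt{d^N}}\varphi_g$. Since this point is on the Lagrangian cone and has the shape of the $J$-function (only $z^0$ and $z^{-k}$ contributions from the substituted parameters), it equals $J^{(Q,G)}(\bt', z)$ after an appropriate change of variables.

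Finally I would verify that the total symplectic transformation obtained by composing $i_1$ with $\Delta'$ and the two $\sqrt{d^N}$-normalizations agrees exactly with $\Delta^\circ$. The sign from $\Delta'$, when pulled back through the relabeling $g \mapsto g\jj^{-1}$, is $(-1)^{\sum_j m_j(g)}$, matching the definition of $\Delta^\circ$. The $\sqrt{d^N}$ factors cancel between the two sides. The change of variables claimed in the theorem then comes from the substitution in Corollary~\ref{c:Jrels}, translated via the identifications: since $\bt_1$ is determined by second derivatives of $\cF_0^{0,\bs}$ in the coordinates $t^{g\jj}_0$ and $t^{\jj}_0$, and since $\cF_0^{0,\bs}$ in the non-equivariant limit is the local GW potential $\cF_0^{[\CC^N/G]}$ (up to a global factor) in the coordinates $t^g$, this translates into the stated second-derivative formula after taking $\lambda \to 0$. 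The main obstacle, and the step requiring the most care, is verifying the narrow-support hypothesis of Lemma~\ref{l:4.15} for $J^{1,\bs}$ so that the non-equivariant limit is legitimately computed sector-by-sector and the broad sectors drop out.
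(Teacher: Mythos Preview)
Your overall architecture is the same as the paper's: identify the $c=0$ twisted theory with local GW, the $c=1$ twisted theory with FJRW via Lemma~\ref{l:4.15}, bridge them with the MLK identification (Corollary~\ref{c:Jrels}), and check that the composite transformation is $\Delta^\circ$. Where you diverge from the paper is in how you verify the hypothesis of Lemma~\ref{l:4.15}, and this is also where your argument has a gap.

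You propose to verify narrow support directly for $J^{1,\bs}(\bt_1,z)$, asserting that ``every coefficient of $J^{1,\bs}$ is a specialization of a genus-zero twisted invariant whose equivariant polynomial structure'' then combines with the $\lambda$-factors in the dual basis to kill broad contributions. The problem is that the $c=1$ twisted correlators are \emph{not} in general polynomial in $\lambda$: polynomiality requires $R^0\pi_*\cL_j=0$, which for $c=1$ is guaranteed only when all insertions are narrow. But $\bt_1=\bt_1(\bt_0)$ is defined through the change of variables, and you have not shown that its broad components vanish; assuming they do in order to deduce polynomiality, in order to then deduce narrow support, is circular. The vague phrase ``equivariant polynomial structure'' is precisely where the argument needs a concrete mechanism, and you have not supplied one.

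The paper avoids this by checking the hypothesis in the $c=0$ (local GW) representation instead. There the function in question is $z\,\partial_{t^{\jj}} J^{[\CC^N/G]}(\bt,z)$, and every correlator that appears carries an $\ii_\jj$ insertion. Since $m_j(\jj)=c_j/d>0$ for every $j$, formula~\eqref{e:degree} with $c=0$ gives $\deg|\cL_j|<0$ on the relevant moduli components, so $R^0\pi_*\cL_j=0$ and each correlator lies in $\CC[\lambda]$. The dual element in the $J$-function expansion then contributes a factor of $\prod_j(-\lambda_j)^{\lfloor 1-m_j(g\jj)\rfloor}$, so the coefficient of $\ii_g$ is a $\CC[\lambda]$-multiple of $\lambda^{N_g}$, which vanishes as $\lambda\to 0$ for every $g$ with $N_g>0$. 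This simultaneously yields existence of the non-equivariant limit and narrow support of the $\sV_+$-projection, and then Lemma~\ref{l:4.15} applies. Once you have this, the rest of your outline (matching $\Delta'\circ i_1$ with $\Delta^\circ$, reading off the change of variables from the $z^0$-coefficient) goes through exactly as you describe.
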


\begin{proof}  
Consider the function $z \frac{\partial}{\partial t^{j}} \left( J^{[\CC^N/G]}(\bm{t}, z) \right) = z \ii_\jj + \sO(1)$.  The terms with non-positive $z$-coefficient are of the form
\begin{equation*}
z^{-k}\br{\ii_g \psi^k, \ii_\jj, \bt, \ldots, \bt }\prod_{j = 1}^N(- \lambda_j)^{  \lfloor 1- m_j(g \jj ) \rfloor } \ii_{g^{-1}}.
\end{equation*}
Due to the insertion of $\ii_\jj$, the universal line bundles over the relevant moduli space have negative degree, and thus $\br{\ii_g \psi^k, \ii_\jj, \bt, \ldots, \bt } \in \CC[\lambda]$.  We see that the coefficient of $\ii_g$ in $z \frac{\partial}{\partial t^{j}} \left( J^{[\CC^N/G]}(\bm{t}, z) \right)$ is a $\CC[\lambda]$-multiple of $\lambda^{N_g}$.
Therefore, $i_1\left( z \frac{\partial}{\partial t^{j}} \left( J^{[\CC^N/G]}(\bm{t}, z) \right)\right)$ satisfies the hypotheses of $F^{1, \bs}$ from the previous lemma.  We conclude that 
\[
 \lim_{\lambda \mapsto 0} \Delta^\circ \left( -z \frac{\partial}{\partial t^{j}} \left( J^{[\CC^N/G]}(\bm{t}, -z) \right)\right) = \lim_{\lambda \mapsto 0} \Delta ' \circ i_1\left(- z \frac{\partial}{\partial t^{j}} \left( J^{[\CC^N/G]}(\bm{t}, -z) \right)\right)
\]
lies on $\sL^{(Q,G)}$.  The result then follows by examining the coefficients of $z^1$ and $z^0$ in the above expression.
\end{proof}

The following more general statement will prove useful for applications.  The proof is the same argument as above.

\begin{theorem}\label{t:general1}
Let $F^{[\CC^N/G]}(\bm{t}, z)$ be a function lying on $\sL^{[\CC^N/G]}$ 
such that the projection of the non-equivariant limit $\lim_{\lambda \mapsto 0} F^{[\CC^N/G]}(\bm t, z)$ to $\sV^{[\CC^N/G]}_+$ is both well defined and is supported in the span of $\ii_g$ such that $g$ fixes only the origin in $\CC^N$.
Then $\lim_{\lambda \mapsto 0} \Delta^\circ ( F^{[\CC^N/G]}(\bm t, z))$ lies on $\sL^{(Q,G)}$ .
\end{theorem}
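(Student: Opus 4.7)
The plan is to mirror the proof of Theorem~\ref{t:gw/fjrw}, replacing the specific function $z\,\partial/\partial t^{j}J^{[\CC^N/G]}$ by the arbitrary $F^{[\CC^N/G]}$ satisfying the stated hypothesis. The key observation is that, once the two state-space identifications $\ii_g \leftrightarrow \sqrt{d^N}\,\phi^0_g$ and $\phi^1_h \leftrightarrow \frac{1}{\sqrt{d^N}}\varphi_h$ are absorbed, the transformation $\Delta^\circ$ factors as $\Delta' \circ i_1$. So the strategy is to transport $F^{[\CC^N/G]}$ along the chain
\[
\sL^{[\CC^N/G]} \xrightarrow{\;\sim\;} \sL^{0,\bs} \xrightarrow{\;i_1\;} \sL^{1,\bs} \xrightarrow{\;\lim_{\lambda\to 0}\Delta'\;} \sL^{(Q,G)}.
\]

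First, using the identification $\sL^{[\CC^N/G]} \cong \sL^{0,\bs}$ from the lemma preceding Theorem~\ref{t:gw/fjrw}, I would view $F^{[\CC^N/G]}$ as a point of $\sL^{0,\bs}$. The \mlk correspondence (Theorem~\ref{c:cones}) then places $i_1(F^{[\CC^N/G]})$ on $\sL^{1,\bs}$. Next I would verify that $i_1(F^{[\CC^N/G]})$ satisfies the hypotheses of Lemma~\ref{l:4.15}: linearity of $i_1$ ensures the non-equivariant limit still exists, and the support condition translates correctly, since $i_1$ sends $\phi^0_g$ to $\phi^1_{g\jj^{-1}}$, thereby converting the assumption ``$g$ fixes only the origin of $\CC^N$'' into ``$g\jj^{-1} \in \hat G$'' by Definition~\ref{d:2.5}. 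Thus the projection of $\lim_{\lambda\to 0} i_1(F^{[\CC^N/G]})$ to $\sV^{1,\bs}_+$ lies in the narrow subspace required by Lemma~\ref{l:4.15}.

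Applying Lemma~\ref{l:4.15} yields $\lim_{\lambda \to 0}\Delta'\bigl(i_1(F^{[\CC^N/G]})\bigr) \in \sL^{(Q,G)}$. To conclude, I would check that this composite coincides with $\Delta^\circ$. The transformation $\Delta'$ multiplies $\phi^1_h$ by $(-1)^{\sum_j m_j(h\jj)}$; setting $h = g\jj^{-1}$ yields the sign $(-1)^{\sum_j m_j(g)}$, which, together with the two $\sqrt{d^N}$ normalizations cancelling, reproduces precisely $\Delta^\circ(\ii_g) = (-1)^{\sum_j m_j(g)}\varphi_{g\jj^{-1}}$.

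The only step requiring any real care is the bookkeeping identification of $\Delta' \circ i_1$ with $\Delta^\circ$; everything else is a direct rerun of the argument for Theorem~\ref{t:gw/fjrw}. Notably, in contrast to that proof, no analysis of $\lambda$-degrees of the universal line bundles is needed here, since the narrow-support condition demanded by Lemma~\ref{l:4.15} is now supplied directly by the hypothesis on $F^{[\CC^N/G]}$ rather than having to be extracted from the specific form of the equivariant $J$-function.
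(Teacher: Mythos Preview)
Your proposal is correct and matches the paper's own argument exactly: the paper simply states ``The proof is the same argument as above,'' referring to Theorem~\ref{t:gw/fjrw}, and what you have written is precisely the unpacking of that sentence. Your bookkeeping for the identification $\Delta^\circ = \Delta'\circ i_1$ (after the two $\sqrt{d^N}$ rescalings) and for the translation of the narrow-support condition under $i_1$ is accurate, and your observation that no $\lambda$-degree analysis is needed here because the hypothesis supplies the narrow support directly is the only point where the argument genuinely differs from that of Theorem~\ref{t:gw/fjrw}.
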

%


\begin{remark}
Theorem~\ref{t:gw/fjrw} should extend more generally to the setting of \emph{hybrid theories}, where the moduli spaces $W_{g, n, G}(\cX)$ parameterize stable maps from curves into a target $\cX$ together with roots of certain universal bundles.  In this setting the $c=0$ case would correspond to local GW theory over $\cX$ and the $c=1$ case to a hybrid theory.  See \cite{Cl} and also \cite{CZ} for more details on this setting. 
\end{remark}

\subsection{Quantum Serre duality}\label{s:qsd}

Here we recall the statement of quantum Serre duality.  The purpose of this section is two-fold.  First, we wish to emphasize the analogy between quantum Serre duality and the \mlk
correspondence given above.  Second, we will use these results in the next section to relate the crepant transform conjecture to the LG/CY correspondence.

Let $\cX$ be a smooth projective orbifold and let $E \to \cX$ be a vector bundle over $\cX$ which is pulled back from the coarse underlying space.  Given an invertible multiplicative characteristic class 
\[ 
  \bs: V \mapsto \exp\left( \sum_{k \ge 0} s_k \ch_k(V)\right),
\]
we may define the $\bs$-twisted GW invariants of $\cX$ in a manner akin to \eqref{e:twistedinvariant} (see \cite{CG} for details).  We will denote these invariants and their corresponding generating functions with the superscript $E, \bs$.

Quantum Serre duality gives a relation between invariants twisted with respect to the vector bundle $E$ and those twisted with respect to the dual bundle $E^\vee$.  The main statement in genus zero is given below.  This is Corollary 10 of \cite{CG}, and follows in the orbifold case from Theorem 6.1.1 in \cite{Ts}.  Let $\{\gamma_i\}_{i \in I}$ be a basis for $H^*_{CR}(\cX)$.  
Let $\bs^*: V \mapsto \exp\left( \sum_{k \ge 0} (-1)^{k+1} s_k \ch_k(V)\right)$, so that $\bs^*(E^\vee) = \frac{1}{\bs(E)}$.  
\begin{theorem}[Quantum Serre duality]\label{t:qsd}  Define the (symplectic) transformation $i_{E^\vee}: \sV^{E^\vee, \bs^*} \to \sV^{E, \bs} $ by $\gamma_i \mapsto \gamma_i/\bs(E)$.  Then
$i_{E^\vee}( \sL^{E^\vee, \bs^*}) = \sL^{E, \bs}$.  Furthermore we have the identification
$$z \frac{\partial}{\partial t^{\bs(E)}_{E^\vee}}i_{E^\vee}\left( J^{E^\vee, \bs^*}(\bt_{E^\vee}, z) \right) = J^{E, \bs}(\bt_E, z)$$ 
where the change of variables is given by   
$$\langle \bt_E, \gamma_i \rangle = \sum_{i \in I} \frac{\partial^2}{\partial t^i_{E^\vee} \partial t^{\bs(E)}_{E^\vee}} \cF_0^{E^\vee, \bs^*}(\bt_{E^\vee}, 0, 0, \ldots) .$$
\end{theorem}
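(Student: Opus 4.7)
The plan is to adapt the symplectic-transformation technique from the proof of Theorem~\ref{t:symplectictransformation} to the Coates--Givental twisted GW theory of $\cX$. First I would verify that $i_{E^\vee}$ is a symplectic isomorphism. The twisted pairing on $\sV^{E, \bs}$ pairs classes on sectors $g$ and $g^{-1}$ with an extra factor of $\bs(E)$ (arising from $\chi(R\pi_*\operatorname{ev}^* E)$ on $\sMbar_{0,3}(\cX)$, in the same manner as Lemma~\ref{l:1.10}), while the pairing on $\sV^{E^\vee, \bs^*}$ uses $\bs^*(E^\vee) = 1/\bs(E)$. The rescaling $\gamma_i \mapsto \gamma_i/\bs(E)$ exactly compensates, so $i_{E^\vee}$ is an isometry, and hence symplectic after extension of scalars to Laurent series in $z$.

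Next I would prove that $i_{E^\vee}(\sL^{E^\vee, \bs^*}) = \sL^{E, \bs}$ by a differential-equations-in-$s_k$ argument mirroring that of Theorem~\ref{t:symplectictransformation}. Both $\cD^{E, \bs}$ and $i_{E^\vee}\cD^{E^\vee, \bs^*}$ satisfy the same initial condition at $\bs = 0$, where both reduce to the untwisted total descendant potential of $\cX$ and $i_{E^\vee}$ specializes to the identity. To see the two sides satisfy the same $\partial/\partial s_k$ equation, one applies the orbifold Grothendieck--Riemann--Roch calculation (of Tseng, generalizing Chiodo's), which expresses $\partial \cF^{E,\bs}/\partial s_k$ as a quadratic differential operator built from $B_{k+1}(m_j(E))$ terms acting on $\cF^{E,\bs}$, where $m_j(E)$ runs over the eigenvalue multiplicities of $E$ on each inertia sector. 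The analogous expression for $E^\vee$ involves $B_{k+1}(1 - m_j(E))$ on non-fixed eigenvalues (and $B_{k+1}(0)$ on the fixed part), paired with the dual parameters $s_k^* = (-1)^{k+1} s_k$. The Bernoulli symmetry $B_{k+1}(1 - x) = (-1)^{k+1} B_{k+1}(x)$ makes the two operators agree once the rescaling built into $i_{E^\vee}$ is absorbed. Taking the semiclassical limit gives the Lagrangian-cone identification.

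For the $J$-function statement, once the Lagrangian cones are identified, I would argue exactly as in the proof of Corollary~\ref{c:Jrels}. The image $i_{E^\vee}(J^{E^\vee, \bs^*}(\bt_{E^\vee}, -z))$ lies on $\sL^{E, \bs}$, and therefore so does its derivative with respect to $-z\,\partial/\partial t^{\bs(E)}_{E^\vee}$. The string equation forces this derivative to have leading $z$-expansion $-\gamma_0 z + O(1)$, so it lies in Givental's transverse slice and must coincide with $J^{E, \bs}(\bt_E, -z)$ for the unique change of variables obtained by matching $z^0$-coefficients; this change of variables is the one stated.

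The main obstacle is the boundary computation (analogous to Step 3 in the proof of Theorem~\ref{t:symplectictransformation}). At a node of the universal curve, the $E$ and $E^\vee$ twists contribute through dual normalization exact sequences, and the $\eta^{g,g^{-1}}$ factors appearing in the quadratic part of the operator $P_k^{(j)}$ carry powers of $\bs(E)$ coming from the pairing. Verifying that these factors conspire with the Bernoulli symmetry and with the rescaling built into $i_{E^\vee}$ to produce identical operators on both sides is the delicate accounting, and is where a direct transcription of the $W^c$ argument requires the most care.
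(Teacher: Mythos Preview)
The paper does not prove Theorem~\ref{t:qsd}. It is stated as a known result, attributed to Corollary~10 of \cite{CG} (and, in the orbifold setting, to Theorem~6.1.1 of \cite{Ts}). So there is no in-paper argument to compare against.

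That said, your sketch is essentially the strategy behind the cited references, and it is sound. One simplification worth noting: rather than rerunning the full differential-equations-in-$s_k$ argument for the pair $(E,\bs)$ versus $(E^\vee,\bs^*)$, Coates--Givental deduce quantum Serre duality by composing the two symplectic transformations they have already built. In the notation of this paper, one knows $\sL^{E,\bs}=\Delta_{E,\bs}\,\sL^{\text{un}}$ and $\sL^{E^\vee,\bs^*}=\Delta_{E^\vee,\bs^*}\,\sL^{\text{un}}$, so it suffices to compute $\Delta_{E,\bs}\circ\Delta_{E^\vee,\bs^*}^{-1}$ and identify it with $i_{E^\vee}$. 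The Bernoulli symmetry $B_{k+1}(1-x)=(-1)^{k+1}B_{k+1}(x)$ you invoke is exactly what makes the $k\ge 1$ contributions cancel in this composition, leaving only the $k=0$ scalar factor that becomes multiplication by $1/\bs(E)$. This route bypasses the delicate nodal bookkeeping you flag as the main obstacle, since that boundary analysis has already been absorbed into the proof of the twisted-to-untwisted transformation (the analogue of Theorem~\ref{t:symplectictransformation}).

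Your derivation of the $J$-function identity from the Lagrangian-cone statement, via the ruling and comparison of $z^1$ and $z^0$ coefficients as in Corollary~\ref{c:Jrels}, is exactly right.
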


\subsubsection{Complete intersections and local invariants}\label{sss:lici}

Consider the special case where $E$ is a direct sum of convex line bundles ($H^1_{CR}(\cC, f^*(E)) = 0$ for all maps $f$ from a curve $\cC$ into $\cX$) and $\bs$ is the equivariant Euler characteristic:
\begin{equation}\label{e:specCI}
s_0 = \ln(\lambda)\text{, } s_k = (-1)^k(k-1)!/\lambda^k \text{ for } k > 0.
\end{equation}
In this case, the genus-zero $\bs$-twisted invariants with respect to $E$ are related to invariants of the hypersurface $\cZ$ cut out by a generic section of $E$ by the so-called \emph{quantum Lefschetz principle} (\cite{CG}, \cite{Ts}).  
Coates has recently rephrased this relationship in terms of Lagrangian cones.  Let $i: \cZ \to \cX$ denote the inclusion.  
\begin{theorem}[\cite{Co}]\label{t:qsd2} Assume that the vector bundle $E$ is pulled back from a vector bundle over the coarse space of $\cX$.  After specializing the $s_k$ as above, let $F^{E, \bs}$ be a point in $\sL^{E, \bs}$ with a well defined non-equivariant limit.  Then $\lim_{\lambda \mapsto 0} i^* (F^{E, \bs})$ lies on $\sL^\cZ$.
\end{theorem}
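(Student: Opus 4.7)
The plan is to deduce the statement from the classical quantum Lefschetz principle, working at the level of the rulings of the Lagrangian cone. The key geometric inputs are: convexity of $E$ forces $R\pi_* f^*E = \pi_*f^*E$ to be a vector bundle over $\sMbar_{0,n}(\cX)$, so the non-equivariant limit of the $\bs$-twisted virtual class (with $\bs$ specialized as in \eqref{e:specCI}) equals $e(\pi_*f^*E) \cap [\sMbar_{0,n}(\cX)]^{vir}$; by convexity this capped class is Poincar\'e dual to the pushforward of $[\sMbar_{0,n}(\cZ)]^{vir}$ under the induced inclusion of moduli spaces; and on $\cX$ one has $i_*i^* = \cup\, e(E)$, with $i^*$ adjoint to $i_*$ with respect to the Poincar\'e pairings.

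First, I would establish the conclusion for the $J$-function slice, i.e.\ that $\lim_{\lambda\to 0} i^*J^{E,\bs}(\bt,z)$ lies on $\sL^\cZ$ after a suitable change of variables. Granting the above inputs, the standard projection-formula computation rewrites any $\bs$-twisted genus-zero correlator of $\cX$ whose inputs are of the form $i^*\tilde\alpha_k$, in the non-equivariant limit, as the corresponding correlator of $\cZ$. Running this identification across all correlators in the description \eqref{e:conedescr} of $\sL^{E,\bs}$, and handling the distinguished output marked point via Poincar\'e duality together with $i^*i_* = \cup\, e(E)$, recovers exactly the description \eqref{e:conedescr} of $\sL^\cZ$.

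Second, I would upgrade from the $J$-function slice to an arbitrary $F^{E,\bs}\in\sL^{E,\bs}$ using the overruled-cone description. Every point of $\sL^{E,\bs}$ lies on some tangent space $L_f$, and $\sL^{E,\bs}$ is ruled by the family of $zL_f$, which are spanned by derivatives $z\partial_{t^i}J^{E,\bs}$ along the $J$-function slice by \eqref{e:Jgens2}. Since the operator $F\mapsto \lim_{\lambda\to 0}i^*F$ commutes with multiplication by $z$ and with differentiation in the $\bt$-variables, it sends the ruling through $J^{E,\bs}(\bt,-z)$ to the ruling through $J^\cZ(\bt',-z)$ under the induced change of variables, producing the required inclusion.

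The main obstacle will be the behavior of insertions lying in $\ker i^*$ when the non-equivariant limit is taken. A priori, the $\bs$-twisted correlators with such inputs need not converge as $\lambda\to 0$, and even when they do the resulting contributions could corrupt the identification after applying $i^*$. This is precisely why the hypothesis that $F^{E,\bs}$ has a well-defined non-equivariant limit is imposed: it excludes pathological terms coming from inverse powers of $e_{\CC^*}(E)$ paired against classes in $\ker i^*$, leaving only those contributions that descend cleanly to $\cZ$ via $i^*i_* = \cup\, e(E)$.
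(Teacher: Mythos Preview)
The paper does not prove this statement: Theorem~\ref{t:qsd2} is quoted from \cite{Co}, with only the subsequent remark that Coates' argument extends to orbifolds once $E$ is assumed pulled back from the coarse space. So there is no proof in the paper to compare your attempt against.

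That said, your outline is the correct one and matches Coates' original argument. Convexity forces $R\pi_*f^*E$ to be a bundle, so the $\lambda\to 0$ limit of the $\bs$-twisted virtual class is $e(\pi_*f^*E)\cap[\sMbar_{0,n}(\cX)]^{vir}$; quantum Lefschetz identifies this with the pushforward of $[\sMbar_{0,n}(\cZ)]^{vir}$, and together with $i_*i^*=\cup\,e(E)$ and the projection formula one obtains $\lim_{\lambda\to 0}i^*J^{E,\bs}(\bt,z)=J^\cZ(\bt',z)$ after a change of variables. Propagating from the $J$-slice to an arbitrary point via the ruling \eqref{e:Jgens2} is also the right mechanism: $i^*$ is linear, commutes with $z$-multiplication and with $\partial_{t^i}$, hence carries $zL_{J^{E,\bs}(\bt,-z)}$ into $zL_{J^\cZ(\bt',-z)}\subset\sL^\cZ$.

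One point deserves more care than your final paragraph gives it. You locate the obstacle in ``insertions lying in $\ker i^*$'', but the genuine issue is upstream of that. A general $F^{E,\bs}\in zL_f$ is a $\CC[z]$-combination $\sum_i c_i(\lambda,z)\,z\partial_{t^i}J^{E,\bs}$ with coefficients that may depend on $\lambda$; the hypothesis that $F^{E,\bs}$ has a non-equivariant limit does not obviously force each $c_i$ to have one. What saves you is that a point of an overruled cone is determined by its $\sV_+$-projection (the $\bq$-coordinates), and the existence of $\lim_{\lambda\to 0}F^{E,\bs}$ pins down this projection, from which the coefficients along the ruling of $\sL^\cZ$ can be read off after applying $i^*$. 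This is the step where the hypothesis is actually consumed; your discussion of $\ker i^*$ is a symptom rather than the mechanism. (A minor slip: in your last sentence you write $i^*i_*=\cup\,e(E)$, but you mean $i_*i^*$ on $\cX$, as you had it correctly in the first paragraph.)
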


\begin{remark}
Although the above theorem was proven only for the case of $\cX$ a smooth variety, the proof extends to orbifolds provided we assume that $E$ is pulled back from a vector bundle $|E| \to |\cX|$ over the coarse underlying space of $\cX$.
\end{remark}

On the other hand, if we specialize to 
\[
s_0 ' = -\ln(-\lambda)\text{, } s_k ' = (k-1)!/\lambda^k \text{ for } k > 0
\]
as in \eqref{e:spec}, the $\bs^*$-twisted invariants with respect to $E^\vee$ give the local invariants of the total space of $E^\vee$.  For $\gamma_1, \ldots, \gamma_n \in H^*_{CR}(\cX)$,
\begin{equation}  \br{ \psi^{a_1} \gamma_1, \ldots, \psi^{a_n}  \gamma_n }^{\text{Tot}(E^\vee)}_{\h,n, d} = \br{ \psi^{a_1} \gamma_1, \ldots, \psi^{a_n} \gamma_n }^{E^\vee, \bs^*}_{\h,n, d}.
\end{equation}

Theorem~\ref{t:qsd} implies a relation between the local invariants of $\text{Tot}(E^\vee)$ and the invariants of the hypersurface $\cZ$.  For our purposes, it is most useful to phrase the relationship in a manner analogous to Theorem~\ref{t:general1}.  Let $\Delta^\diamond$ denote the symplectic transformation
\begin{equation*}
\Delta^\diamond: \phi_i \mapsto e^{\pi \sqrt{-1}c_1(E)/z}\frac{(-1)^{\text{rk}(E)}}{e_{\CC^*}(E)}\phi_i.
\end{equation*}

\begin{theorem}\label{t:general2}
Let $F^{Tot(E^\vee)}(\bm t, z)$ be a function lying on $\sL^{Tot(E^\vee)}$. Assume further that $F^{Tot(E^\vee)}(\bm t, z)$ takes the form
\[F^{Tot(E^\vee)}(\bm t, z) = e_{\CC^*}(E) \widetilde{ F}^{Tot(E^\vee)}(\bm t, z),\] and that
$\widetilde F^{Tot(E^\vee)}(\bm t, z)$ has
a well defined non-equivariant limit.  
Then $\lim_{\lambda \mapsto 0} i^* \circ \Delta^\diamond (F^{Tot(E^\vee)}(\bm t, z))$ lies on $\sL^\cZ$.
\end{theorem}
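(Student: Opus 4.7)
The plan is to chain quantum Serre duality (Theorem~\ref{t:qsd}) with Coates' hypersurface theorem (Theorem~\ref{t:qsd2}), paralleling the proof of Theorem~\ref{t:gw/fjrw} but in the smooth GW rather than FJRW setting, and carefully tracking the mismatch between the specializations of the twisting parameters required by each.

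First, I identify $\sL^{\text{Tot}(E^\vee)}$ with the twisted Lagrangian cone $\sL^{E^\vee,\bs^*}$ under the specialization~\eqref{e:spec}, as recalled at the start of Section~\ref{sss:lici}; in these terms $F^{\text{Tot}(E^\vee)} \in \sL^{E^\vee,\bs^*}$. Applying quantum Serre duality via $i_{E^\vee}\colon \gamma \mapsto \gamma/\bs_{\mathrm{QSD}}(E)$ produces a point on $\sL^{E,\bs_{\mathrm{QSD}}}$, where $\bs_{\mathrm{QSD}}$ is the characteristic class determined by $s_k = (-1)^{k+1}s_k^*$. A direct splitting-principle calculation, analogous to the one yielding Corollary~\ref{c:2.4}, shows that $\bs_{\mathrm{QSD}}(E) = (-1)^{\text{rk}(E)}\, e_{\CC^*}(E)$; consequently $i_{E^\vee}$ acts on $F^{\text{Tot}(E^\vee)}$ as multiplication by $(-1)^{\text{rk}(E)}/e_{\CC^*}(E)$.

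Theorem~\ref{t:qsd2}, however, requires the specialization~\eqref{e:specCI}, i.e., $\bs_{\mathrm{CI}}(E) = e_{\CC^*}(E)$. The two specializations $\bs_{\mathrm{QSD}}$ and $\bs_{\mathrm{CI}}$ differ only in $s_0$ (by $\pi\sqrt{-1}$) and in the signs of $s_k$ for $k \geq 1$. By the Coates--Givental formula \cite{CG} for the symplectic transformation between twisted and untwisted Lagrangian cones (the smooth GW analogue of Theorem~\ref{t:symplectictransformation}), this mismatch induces a further symplectic operator relating $\sL^{E,\bs_{\mathrm{QSD}}}$ and $\sL^{E,\bs_{\mathrm{CI}}}$. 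An explicit Bernoulli polynomial and Chern-root computation identifies this operator with multiplication by $e^{\pi\sqrt{-1}c_1(E)/z}$, which, combined with the QSD factor from the previous paragraph, is precisely $\Delta^\diamond$. Thus $\Delta^\diamond(F^{\text{Tot}(E^\vee)}) \in \sL^{E,\bs_{\mathrm{CI}}}$, and the hypothesis that $\widetilde F^{\text{Tot}(E^\vee)}$ has a well-defined non-equivariant limit ensures the same for $\Delta^\diamond(F^{\text{Tot}(E^\vee)})$. Applying Theorem~\ref{t:qsd2} to this point then shows that $\lim_{\lambda \mapsto 0} i^* \circ \Delta^\diamond(F^{\text{Tot}(E^\vee)})$ lies on $\sL^{\cZ}$, as claimed.

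The main obstacle is the bridge calculation in the second paragraph: verifying that the symplectic operator reconciling the two specializations $\bs_{\mathrm{QSD}}$ and $\bs_{\mathrm{CI}}$ reduces to multiplication by $e^{\pi\sqrt{-1}c_1(E)/z}$, once the scalar $(-1)^{\text{rk}(E)}/e_{\CC^*}(E)$ from QSD has been factored out. This requires careful bookkeeping of Bernoulli polynomials at $0$ applied to the Chern roots of $E$, together with attention to branches of logarithms when shifting $s_0$. Once this identification is established, the rest of the argument is a formal concatenation of the two cited theorems.
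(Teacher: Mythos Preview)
Your proposal is correct and follows essentially the same route as the paper: decompose $\Delta^\diamond$ as $i_{E^\vee}$ (quantum Serre duality, Theorem~\ref{t:qsd}) followed by the correction $e^{\pi\sqrt{-1}c_1(E)/z}$ accounting for the mismatch between the specializations~\eqref{e:spec} and~\eqref{e:specCI}, then invoke Theorem~\ref{t:qsd2}. The paper attributes the mismatch solely to the $s_0$ shift (citing the remark after Theorem~1$'$ in \cite{CG}) rather than also to sign changes in $s_k$ for $k\geq 1$, but both analyses arrive at the same bridging operator and the argument is otherwise identical.
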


\begin{proof} 
The symplectic transformation $\Delta^\diamond$ may be written as $\Delta '' \circ i_{E^\vee}$, where $i_{E^\vee}$ is as in Theorem~\ref{t:qsd}, and $\Delta '' = e^{\pi \sqrt{-1} c_1(E)/z}$. The map $\Delta ''$ compensates for the fact that with our given specializations, $s_0 '$ does not equal $-s_0$ as in the relationship between $s_0^*$ and $s_0$ in Theorem~\ref{t:qsd}, but rather $s_0 ' = -s_0 - \sqrt{-1} \pi $ (see the remark after Theorem 1' in \cite{CG} for details).  By Theorem~\ref{t:qsd}, $\Delta^\diamond (F^{Tot(E^\vee)}(\bm t, z))$ lies in $\sL^{E, \bs}$ with the specialization \eqref{e:specCI}.  The specific assumptions on $F^{Tot(E^\vee)}(\bm t, z)$ guarantee that $\Delta^\diamond (F^{Tot(E^\vee)}(\bm t, z))$ has a well defined non-equivariant limit.  Theorem~\ref{t:qsd2} then implies the result.
\end{proof}

\section{The CTC and the LG/CY correspondence}\label{s:ctclgcy}

In this section we give an application of Theorem~\ref{t:gw/fjrw}.  In particular we use it together with its analogue, quantum Serre duality (Theorem~\ref{t:general2}), to relate two well known conjectures from Gromov--Witten theory.  We show that in genus zero, the well known \emph{crepant transformation conjecture} (also known as the crepant resolution conjecture) from \cite{CCIT, CR} implies the more recent \emph{LG/CY correspondence} of \cite{ChR}.

\subsection{The Conjectures}
Let $(Q, G)$ be an LG pair with $Q = \sum_{j=1}^N x_j^{d/c_j}$ a Fermat polynomial and $G$ an admissible subgroup of $SL_N(\CC)$.  We assume always that $\gcd(c_1, \ldots, c_N) = 1$.  Let $\bar{G}$ denote the quotient $G/\langle \jj \rangle$.  Note that $Q$ may be viewed as a homogeneous function on the stack quotient $\PP(G) = [\PP(c_1, \ldots, c_N)/ \bar{G}]$.  
\begin{definition}\label{d:cyc}
We say a quasi-homogeneous polynomial satisfies the \emph{Calabi--Yau condition} if $\sum_{i = 1}^N c_i = d$.  This is equivalent to requiring that $Q$ give a global section of the anticanonical bundle of  $\PP(G)$, which, by the adjunction formula, implies that $\{Q = 0\}$ defines a Calabi--Yau variety.
\end{definition}
Assume from here forward that $Q$ satisfies the Calabi--Yau condition. Let $\cX$ denote the stack quotient $[\CC^N/ G]$ and let $\cY$ denote the total space of the canonical bundle $K = K_{\PP(G)}$.

\begin{lemma}
The space $\cY$ gives a toric crepant partial resolution of $\cX$.
\end{lemma}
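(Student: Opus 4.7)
The plan is to construct an explicit toric birational morphism $f:\cY \to \cX$ and check it is crepant. I would proceed in three steps.

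First, I would verify that both sides are toric DM stacks. For $\cX = [\CC^N/G]$, this is immediate since $G \subset (\CC^*)^N$ sits inside the standard torus of $\CC^N$, so $\cX$ carries an action of the quotient torus $(\CC^*)^N/G$. For $\cY$, the weighted projective stack $\PP(c_1,\ldots,c_N)$ is toric, its quotient $\PP(G) = [\PP(c_1,\ldots,c_N)/\bar G]$ by the diagonal subgroup $\bar G$ is again toric, and the total space of any equivariant line bundle over a toric stack is toric. Thus $\cY = \on{Tot}(K_{\PP(G)})$ is toric.

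Second, I would construct the contraction morphism. Write $\PP(c_1,\ldots,c_N) = [(\CC^N\setminus\{0\})/\CC^*]$ with weights $c_i$, and note that the Calabi--Yau condition $\sum c_i = d$ gives $K_{\PP(c_1,\ldots,c_N)} \cong \cO_{\PP(\bc)}(-d)$. The total space of $\cO_{\PP(\bc)}(-d)$ is the standard weighted blow-up of $[\CC^N/\langle \jj \rangle]$ at the origin: its zero section is the exceptional locus and contracts to the image of $0$, while the complement is isomorphic to $[(\CC^N\setminus\{0\})/\langle \jj\rangle]$. The residual $\bar G$-action is free on the base and lifts compatibly to the total space, and the condition $\jj \in SL_N$ (which follows from the CY condition) together with $G \subset SL_N$ ensures that $\cO_{\PP(\bc)}(-d)$ descends to the canonical bundle $K_{\PP(G)}$ on the $\bar G$-quotient. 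Taking $\bar G$-quotients of the contraction yields the desired birational toric morphism
\[ f: \cY = \on{Tot}(K_{\PP(G)}) \longrightarrow [\CC^N/G] = \cX, \]
which is an isomorphism over the complement of the origin and contracts the zero section $\PP(G) \subset \cY$ to the stacky point $[0/G]$.

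Third, I would verify crepancy. Since $G \subset SL_N(\CC)$, the top form $dx_1 \wedge \cdots \wedge dx_N$ is $G$-invariant, so $K_\cX$ is trivial. For $\cY$, the adjunction formula for the total space of a line bundle $\pi: L \to B$ reads $K_{\on{Tot}(L)} = \pi^*(K_B \otimes L^{-1})$; applied to $L = K_{\PP(G)}$, this gives $K_\cY \cong \cO_\cY$. Therefore $f^* K_\cX \cong K_\cY$ trivially, and $f$ is crepant.

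The main technical obstacle I anticipate is in the middle step: verifying carefully that $\cO_{\PP(\bc)}(-d)$ really descends to $K_{\PP(G)}$ under the $\bar G$-quotient (rather than to a twist of it by a character of $\bar G$), which is where both the Calabi--Yau condition and the hypothesis $G \subset SL_N$ are genuinely used. Alternatively, one can bypass the descent argument by doing the entire construction at the level of toric fans: describe the fan of $\cX$ as the cone spanned by a suitable stacky lattice and exhibit the fan of $\cY$ as the star subdivision obtained by inserting the ray generated by $\jj$, with crepancy becoming the statement that this new ray lies on the hyperplane through the primitive generators of $\cX$'s rays, which is precisely $\sum c_i = d$.
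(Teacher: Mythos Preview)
Your proposal is correct but takes a different primary route from the paper. The paper works entirely with fans: it takes the lattice $M$ and ray generators $p_1,\ldots,p_N$ for $\PP(G)$, passes to the augmented lattice $\widetilde M = M \oplus \ZZ$ with $\tilde p_j = (p_j,1)$, realizes $\cX$ as the toric stack for the fan $\Sigma'$ consisting of the single cone on $\{\tilde p_1,\ldots,\tilde p_N\}$ and its faces, and obtains $\cY$ as the star subdivision of $\Sigma'$ at the ray $(\mathbf{0},1)$. Crepancy is then read off from the fact that all ray generators, including the inserted one, sit at height $1$ in the last coordinate. This is precisely the alternative you sketch in your final paragraph.

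Your primary approach---building the contraction explicitly as a weighted blow-down and checking crepancy by computing that both canonical bundles are trivial---is also valid and arguably more transparent about where the Calabi--Yau condition $\sum c_i = d$ and the hypothesis $G \subset SL_N(\CC)$ are actually used. One small point to tighten: the contraction you construct naturally lands in the coarse space $|\cX| = \CC^N/G$ rather than in the stack $\cX$, since the zero section $\PP(G)$ has no canonical map to the stacky point $[0/G] \simeq BG$. The paper itself phrases crepancy as a property of $\cY \to |\cX|$, and your triviality argument for the canonical bundles adapts immediately since $G \subset SL_N$ makes $K_{|\cX|}$ trivial as well. The fan approach buys a quicker and more uniform proof; your approach buys a clearer picture of the geometry of the contraction.
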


\begin{proof}
Let $M \cong \ZZ^N$ denote a lattice and let $\Sigma \subset M$ be a fan such that $X_\Sigma = \PP(G)$.  Let $p_1, \ldots, p_N \in M$ be the primitive generators of the $N$ rays of $\Sigma$. The cones of $\Sigma$ are exactly those whose extremal rays are generated by $\{p_{j_1}, \ldots, p_{j_k}\}$ where $\{j_1, \ldots, j_k\}$ is a strict subset of $\{1, \ldots,  N\}$.  Abusing notation, we will identify a cone with its ray generators.

%

Let $\widetilde M$ denote the augmented lattice $M \oplus \ZZ$, and define $\tilde p_j := (p_j, 1) \in \widetilde M$.  Define $\Sigma '$ as the fan in $\widetilde M$ consisting of the cones $\{\tilde p_{j_1}, \ldots, \tilde p_{j_k}\}$ for $\{j_1, \ldots, j_k\}$ \emph{any} subset of $[[1, N]]$.  Define $\widetilde \Sigma$ as the star subdivision of $\Sigma '$ after adding the ray generated by $(\mathbf{0}, 1)$ where $\mathbf{0}$ is the origin in $M$.  One may check using simple toric arguments that $\cX$ is equal to the toric stack $X_{\Sigma '}$, and $\cY$ is $X_{\widetilde \Sigma }$.  It is apparent from this description that $\cY$ is a toric partial resolution of $\cX$.  Furthermore note that all ray generators of $\Sigma '$ are at height one in the augmented coordinate, as is the added ray $(\mathbf{0}, 1)$ defining the resolution.  This implies that $\cY \to |\cX|$ is crepant.
\end{proof}

The inertia orbifold $I\cX$ is a disjoint union of components $\cX_g$ indexed by $g \in G$.  There is a natural choice of basis for the equivariant cohomology of $\cX$ given by $\{\ii_g\}_g \in G$, where $\ii_g$ is the fundamental class of $\cX_g$.  

%
The components of the inertia orbifold $I_\cY$ are indexed by those $g \in G$ which fix a positive-dimensional subspace of $\CC^N$, i.e. $N_g >0$.
For notational convenience we will write $I \cY = \coprod_{g \in G} \cY_g$, with the understanding that $\cY_g$ is empty unless $N_g > 0$.
An equivariant basis for the Chen--Ruan cohomology of $\cY$ is given by 
\[\cup_{g \in G} \{ \tilde \ii_g, \tilde \ii_g H, \ldots, \tilde \ii_g H^{(N_g - 1)}\},
\]
where $\tilde \ii_g$ is the fundamental class of $\cY_g$ and $\tilde \ii_g H^k$ denotes the pullback of the $k$th power of the hyperplane class from the course space of $\cY_g$.  Here again we use the convention that $\ii_g$ is zero if $\cY_g$ is empty.

Gromov--Witten theory for local toric targets is usually defined in terms of equivariant cohomology.  In our case we use a $\CC^*$-action on $\cX$ with weight $-c_j$ on the $j$th component, in other words $\lambda_j = c_j\lambda$.  The corresponding action on $\cY$ is by multiplication in the fiber direction with character $-d\lambda$. 
We now give (a refined version of) the genus zero crepant transformation conjecture.  Let $\sL^{\cX} \subset \sV^{\cX}$ and $\sL^{\cY} \subset \sV^{\cY}$ denote the Lagrangian cones corresponding to the equivariant GW theory of $\cX$ and $\cY$ respectively.  We distinguish two coordinates in the respective $J$-functions.  Let $t = t^\jj$ denote the dual coordinate to $\ii_\jj$ in $H^*_{CR}(\cX)$.  Let $q$ denote the \emph{exponential} of the dual coordinate to the hypersurface $H$ in $H^*(\cY) \subset H^*_{CR}(\cY)$.  By the divisor equation, the function $J^\cY$ (and therefore the Lagrangian cone $\sL^\cY$) is a well defined function of $q$ (\cite{AGV}). 
Let us assume further that there exists a function $I^\cY( \bt, z)$ which generates $\sL^\cY$ in the sense of~\eqref{e:Jgens} and is in fact analytic in a neighborhood of $q = 0$.  
Then
via the change of variables 
\[ q = t^{-d}\]
and analytic continuation, we can view $I^\cY( \bt, z)$ as a function of $t$. Thus it makes sense to analytic continue $\sL^\cY$ from $q = 0$ to $t = 0$.

\begin{conjecture}[The crepant transformation conjecture for $\cY \dasharrow \cX$, \cite{CIT, CR}]\label{c:CTC}  The analytic continuation of $\sL^\cY$ converges in a neighborhood of $t = 0$, and
there exists a symplectic transformation $\UU: \sV^\cX \to \sV^\cY$ which identifies $\sL^{\cX}$ with the analytic continuation of $\sL^{\cY}$.  
\end{conjecture}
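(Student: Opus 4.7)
The plan is to prove the conjecture by exploiting the explicit toric description of both $\cX$ and $\cY$ established in the lemma preceding the conjecture, and to carry out analytic continuation via the Mellin--Barnes integral representation of hypergeometric $I$-functions. This is the standard strategy for CTC in the toric setting, going back to Coates--Corti--Iritani--Tseng and refined for toric complete intersections by Coates--Iritani--Jiang (referenced in the acknowledgments).

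First I would write down Givental's $I$-functions $I^\cX(\bt,z)$ and $I^\cY(\bt,z)$ explicitly from the fans $\Sigma'$ and $\widetilde\Sigma$. Each is a hypergeometric series whose coefficients are products of $\Gamma$-functions built from the ray data $\{\tilde p_j\}$ and the Kähler parameter $q$ dual to $H$ (on the $\cY$ side). By the toric mirror theorem for Deligne--Mumford stacks, each $I$-function lies on — and, after suitable differentiation, generates in the sense of \eqref{e:Jgens} — the corresponding Lagrangian cone $\sL^\cX$ and $\sL^\cY$. In particular, $I^\cY$ has the required analyticity near $q = 0$ demanded in the statement of the conjecture.

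Second I would perform the analytic continuation of $I^\cY$ from $q = 0$ to $t = 0$ under the change of variables $q = t^{-d}$. The standard tool is to rewrite the hypergeometric series as a Barnes-type contour integral in an auxiliary complex parameter $s$, deform the contour across the poles located at non-positive arguments of the Gamma functions, and collect residues. The deformation regroups the residues according to the components of $I\cX$, producing a new series that converges in a neighborhood of $t = 0$ and matches $I^\cX$ up to the action of a linear operator $\UU$ whose matrix coefficients are ratios of Gamma values. Concretely, $\UU$ acts by $\ii_g \mapsto \sum_{h\in G} U_{g,h}\,\tilde\ii_h$ where $U_{g,h}$ is read off from the residue computation and is naturally expressed in terms of the equivariant Gamma classes $\widehat\Gamma_\cX$ and $\widehat\Gamma_\cY$, together with a factor $e^{2\pi i H/z}$ encoding the Galois-type monodromy of $H$ around $q = \infty$.

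Third I would verify that $\UU$ is genuinely symplectic, i.e.\ intertwines the pairings $\Omega_\cX$ and $\Omega_\cY$. This is where Iritani's Gamma-integral structure does the heavy lifting: the reflection identity $\Gamma(x)\Gamma(1-x) = \pi/\sin(\pi x)$ together with the equality $c_1(K_{\PP(G)}) = -\sum c_j H$ forces the Gamma-class factors on the two sides to combine into a compatible symplectic intertwiner. Finally, because $I^\cX$ and $I^\cY$ only generate the cones (not parameterize them directly), I would close the argument by showing that the image under $\UU$ of the tangent ruling spanned by $z\partial_{t^i}I^\cY$ coincides with that of $z\partial_{t^i}I^\cX$, which is automatic once the operator identity $\UU \cdot I^\cY = I^\cX$ is established and $\UU$ commutes with multiplication by $z$.

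The main obstacle is the analytic continuation step: one must justify the contour deformation rigorously in the presence of the orbifold stabilizers (which create coincident and higher-order poles in the Barnes integral), verify convergence of the resulting series in an honest neighborhood of $t = 0$, and confirm that the mirror map degenerates to the identity in the directions of $\bt$ orthogonal to $H$ so that no further change of variables intervenes. For the specific family of targets considered here — crepant resolutions of the form $K_{\PP(G)} \to [\CC^N/G]$ with $G \subset SL_N(\CC)$ and Fermat weights — these technicalities are handled by the Coates--Iritani--Jiang framework, which one can invoke directly to conclude.
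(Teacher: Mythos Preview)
The statement you are attempting to prove is labeled a \emph{Conjecture} in the paper, and the paper does not contain a proof of it. What the paper actually proves is the weaker Theorem~\ref{t:CRCsympl}: there is a symplectic $\UU$ identifying the explicit (small) $I$-functions $I^\cX$ and $I^\cY$ after analytic continuation. The Remark following that theorem says explicitly that ``because we use so-called \emph{small $I$-functions}, the above theorem does not quite imply the full correspondence between Lagrangian cones. To recover the full statement one would need to construct \emph{big $I$-functions} as in \cite{CFK}.''

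Your outline---write down toric $I$-functions, recast via $\widehat\Gamma$-classes, perform Mellin--Barnes analytic continuation, read off $\UU$, check symplecticity via the reflection identity / integral structure---is exactly the route the paper takes in Section~\ref{s:app} to prove Theorem~\ref{t:CRCsympl}. So at the level of strategy you are aligned with the paper.

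The genuine gap is your assertion that the $I$-functions, ``after suitable differentiation, generate in the sense of \eqref{e:Jgens}'' the Lagrangian cones. Equation~\eqref{e:generic} requires a \emph{generic} slice parameterized by all of $H^\square$; the small $I$-functions you (and the paper) write down depend only on the restricted variables $t$ and $\{t^{g_s}\}_{s\in S}$, not on the full state space. Consequently the tangent ruling spanned by their $z\partial_{t^i}$-derivatives need not sweep out the entire cone, and the step ``automatic once $\UU\cdot I^\cY = I^\cX$ is established'' does not close the argument. Your final appeal to \cite{CIJ} is legitimate as a citation, but it is doing all of the work beyond what your own steps establish; without it, what you have is precisely the paper's Theorem~\ref{t:CRCsympl}, not Conjecture~\ref{c:CTC}.
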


In our case we deal with local targets, here one may refine the above conjecture to take into account the equivariant nature of the theory.

\begin{conjecture}[The refined crepant transformation conjecture]\label{c:rCTC}
Conjecture~\ref{c:CTC} holds.  In addition 
the following conditions are satisfied:
\begin{enumerate}
\item \label{i:c1} $\UU$ has coefficients in $\CC[\lambda, z, z^{-1}]$.  In the non-equivariant limit, $\UU$ restricts to an isomorphism between the subspaces of $\sV^\cX_c \subset \sV^\cX$ and $\sV^\cY_c \subset \sV^\cY$ spanned by classes of compact support.
\item \label{i:c2}  
$$\UU( \ii_g) = C_0( \lambda) \tilde \ii_g + \sum_{b = 1}^{d - 1} (\lambda + H) \cdot C_b(\lambda) \tilde \ii_{g \jj^b}$$
where $C_b(\lambda) \in H^*(\cY)[\lambda]((z^{-1}))$.  In particular,
the restriction, $\UU_c$, of $\UU$ to $\sV_c^\cX$
has image in the $\CC((z^{-1}))$-span of $(\lambda + H)\cdot H^*_{CR}(\cY)[\lambda]$.
%
\end{enumerate}
\end{conjecture}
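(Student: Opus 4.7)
The plan is to establish the refined crepant transformation conjecture by matching both Lagrangian cones to explicit hypergeometric generating functions supplied by the mirror theorem for toric Deligne--Mumford stacks (as in \cite{CCIT}). Both $\cY = K_{\PP(G)}$ and $\cX = [\CC^N/G]$ are toric with explicit fan descriptions given in the proof of the preceding lemma, so one may write down equivariant $I$-functions $I^\cY(q, z)$ and $I^\cX(t, z)$ lying on $\sL^\cY$ and $\sL^\cX$ respectively. On the $\cY$-side, $I^\cY$ is obtained from the $I$-function of $\PP(G)$ by inserting the standard hypergeometric factor for the canonical fiber direction carrying character $-d\lambda$; on the $\cX$-side, $I^\cX$ is a product, one factor per coordinate $x_j$, of affine hypergeometric series in the $c_j\lambda$.

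Next I would analytically continue $I^\cY$ from the large-radius region $q \to 0$ to the orbifold region $t \to 0$ via the substitution $q = t^{-d}$. The standard tool is a Mellin--Barnes integral representation of the hypergeometric series whose contour may be deformed to enclose the poles corresponding to the twisted sectors of $\cX$. Summing the residues expresses the analytic continuation of $I^\cY$ as $I^\cX$ acted on by an explicit matrix of $\Gamma$-function ratios; promoting this matrix to a $z$-linear operator in the standard way defines the candidate symplectic transformation $\UU \colon \sV^\cX \to \sV^\cY$. By the Givental reconstruction principle, $\UU$ will then identify $\sL^\cX$ with the analytic continuation of $\sL^\cY$ once the two $I$-functions are matched along a common one-parameter family, which is the heart of Conjecture~\ref{c:CTC}.

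The main obstacle is the refined part, namely conditions (1) and (2). Polynomiality of $\UU$ in $\lambda, z, z^{-1}$ should follow from rewriting the ratios of $\Gamma$-factors as finite products $\prod_{k=0}^{n-1}(\bullet + kz)$, the truncation being forced precisely by the Calabi--Yau condition $\sum c_i = d$. The key factor $(\lambda + H)$ in condition (2), present for every $b \in \{1, \ldots, d-1\}$, should emerge from the $-d\lambda$ character on the canonical direction: each relevant residue carries a $\Gamma$-factor whose evaluation after age-shift by $\jj^b$ produces a zero along $H = -\lambda$, since the age of $\jj^b$ on that direction is $b/d \in (0,1)$. Finally, compatibility with compactly supported classes in the non-equivariant limit should follow from matching $\UU$ with the $\hat{\Gamma}$-integral structure of Iritani, where compact-support classes correspond to the image of $K$-theory under the $\hat{\Gamma}$-pairing. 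I expect the bulk of the technical work to lie in the careful bookkeeping of these $\Gamma$-factors to verify the $(\lambda+H)$-divisibility cleanly; the existence and continuation part of the statement should be a direct application of the toric mirror theorem combined with the computation of \cite{CIJ}.
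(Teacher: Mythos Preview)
Your approach matches the paper's Section~7 almost exactly: write down the toric $I$-functions for $\cX$ and $\cY$, analytically continue $I^\cY$ via Mellin--Barnes, and read off the transformation.

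Two points are worth flagging. First, after the residue computation the paper obtains a closed form for the core of the transformation,
\[
\overline{\UU}\colon \ii_g \;\longmapsto\; \sum_{0 \le b < d} \frac{e^{d(\lambda+H)} - 1}{d\bigl(e^{(\lambda+H)}\xi^{\,b} - 1\bigr)}\,\tilde\ii_{g\jj^{-b}},
\]
from which condition~(2) is immediate: for $b \ne 0$ the numerator vanishes at $\lambda + H = 0$ while the denominator does not, so $(\lambda+H)$ divides the coefficient outright. Your proposed route through $\Gamma$-ratio bookkeeping and age shifts is pointed in the right direction but is considerably less direct; the exponential form makes the divisibility transparent and you should aim for it. For condition~(1) the paper simply notes that the formula above visibly has a non-equivariant limit, and defers the compact-support isomorphism to the Fourier--Mukai interpretation in \cite{CIJ} rather than arguing via $\hat\Gamma$-structures from scratch.

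Second, you assert that matching the $I$-functions along a one-parameter family identifies the full Lagrangian cones by Givental reconstruction. The paper explicitly flags this as a gap (see the remark following Theorem~\ref{t:CRCsympl}): the $I$-functions here are \emph{small} and do not determine the entire cones without passing to big $I$-functions as in \cite{CFK}. The paper does not close this gap; it settles for the $I$-function-level statement, which is all that is needed downstream for the LG/CY application. Your proposal should acknowledge the same limitation rather than claim the full cone identification.
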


\begin{remark} 
Conditions \eqref{i:c1} and \eqref{i:c2} above are very natural.
It is generally believed that the symplectic transformation $\UU$ should be induced by a Fourier--Mukai transform between equivariant $K$-groups, in the sense of \cite{Iri}.  In this case, $\UU$ will automatically be symplectic, because the Fourier--Mukai transform is a category equivalence and preserves the categorical Euler pairing.  Furthermore the Fourier--Mukai transform has a nonequivariant limit and preserves the compactly supported part of the $K$-groups, which induces the corresponding properties in $\UU$.  See \cite{Iri} for more details.  In the next section we give further evidence for these conditions.
%
\end{remark}

The Landau--Ginzburg/Calabi--Yau (LG/CY) correspondence takes a similar form to Conjecture~\ref{c:CTC}.
Given an LG pair $(Q, G)$ as in the previous section, let $\cZ$ denote the Calabi--Yau variety $\{Q = 0\} \subset \PP(G)$.  Let $i:\cZ \to \PP(G)$ denote the inclusion.  The LG/CY correspondence relates the FJRW theory of $(Q,G)$ to the GW theory of $\cZ$ in a similar fashion to the crepant transformation conjecture.  In particular, in genus zero the conjecture states that there is a symplectic transformation identifying the respective Lagrangian cones.  

\begin{conjecture}[The LG/CY correspondence for $(Q,G)$]\label{c:LGCY}
There exists a symplectic transformation $\VV: \sV^{(Q, G)} \to \sV^{\cZ}$ which identifies $\sL^{(Q,G)}$ with the analytic continuation of $\sL^{\cZ}$.  
\end{conjecture}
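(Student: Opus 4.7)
The plan is to deduce Conjecture~\ref{c:LGCY} from the refined crepant transformation conjecture (Conjecture~\ref{c:rCTC}) for the pair $(\cX,\cY)=([\CC^N/G],K_{\PP(G)})$ by composing the other three arrows of the diagram in the introduction: the \mlk correspondence (Theorem~\ref{t:general1}), the CTC isomorphism $\UU$, and quantum Serre duality in the form of Theorem~\ref{t:general2}. The desired symplectic transformation $\VV$ will be produced as the unique map that completes the square.

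Concretely, I would start from a point $F^\cY\in\sL^\cY$ of the form $F^\cY=e_{\CC^*}(-K)\,\widetilde F^\cY$ with well-defined non-equivariant limit, so that Theorem~\ref{t:general2} applies and yields $\lim_{\lambda\to 0}(i^*\circ\Delta^\diamond)(F^\cY)\in\sL^\cZ$. Simultaneously, after analytic continuation in the K\"ahler variable $q=t^{-d}$, Conjecture~\ref{c:rCTC} delivers $\UU(F^\cY)\in\sL^\cX$. Conditions~\eqref{i:c1} and~\eqref{i:c2} of the refined CTC are tailored to precisely this step: the first says $\UU$ restricts to an isomorphism between the compactly supported pieces of $\sV^\cX$ and $\sV^\cY$, and the second constrains the image of each $\ii_g$ to lie in the $(\lambda+H)$-span of the twisted sectors $\tilde\ii_{g\jj^b}$. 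Combining the two, the positive-$z$ part of $\UU(F^\cY)$ in the non-equivariant limit is supported in sectors $\ii_{g'}$ with $g'$ fixing only the origin of $\CC^N$, which is exactly the hypothesis of Theorem~\ref{t:general1}. Applying $\lim_{\lambda\to 0}\Delta^\circ$ therefore produces a point on $\sL^{(Q,G)}$. Defining $\VV$ by
\[
\VV\!\left(\lim_{\lambda\to 0}\Delta^\circ(\UU(F^\cY))\right)=\lim_{\lambda\to 0}(i^*\circ\Delta^\diamond)(F^\cY),
\]
as $F^\cY$ ranges over the admissible inputs, gives the sought-after transformation, provided that the right-hand side depends only on the left and extends to a symplectic isomorphism of the ambient spaces.

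The main obstacle I anticipate is the compatibility of the two analytic continuations: the one in $q=t^{-d}$ between $q=0$ and $t=0$ used on the CTC side, and the one between the FJRW phase and the large-volume phase used on the LG/CY side. Showing that $\VV$ intertwines these two continuations is not purely formal; it requires identifying the hypergeometric $I$-functions on $\cY$, $\cZ$, $\cX$ and of $(Q,G)$ and matching them term by term across the continuation. A secondary, more technical issue is to verify that the images of the \mlk and QSD maps sweep out enough of $\sL^{(Q,G)}$ and $\sL^\cZ$ to determine $\VV$ on the full ambient symplectic space; this should follow because the directions not in the image are precisely those annihilated by the non-equivariant limit, and conditions~\eqref{i:c1}--\eqref{i:c2} arrange for these directions to match on both sides. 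Once these compatibilities are established, symplecticity of $\VV$ is essentially automatic, since each of the three composing arrows preserves the relevant pairings up to explicit scalar factors built into the definitions of $\Delta^\circ$, $\UU$, and $\Delta^\diamond$.
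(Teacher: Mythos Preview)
Your overall strategy---deduce LG/CY from the refined CTC by composing the \mlk correspondence, $\UU$, and quantum Serre duality---is exactly the paper's. But two points deserve correction.

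First, a minor one: in the paper $\UU:\sV^\cX\to\sV^\cY$, so if you start with $F^\cY\in\sL^\cY$ you obtain $\UU^{-1}(F^\cY)\in\sL^\cX$, not $\UU(F^\cY)$.

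Second, and more substantively, the paper does \emph{not} define $\VV$ implicitly on cone points and then attempt to show well-definedness and linear extension. It defines $\VV$ from the outset as an explicit linear operator,
\[
\VV:=\bigl(i^*\circ\Delta^\diamond\circ\UU_c\circ(\Delta^\circ)^{-1}\bigr)\big|_{\lambda=0},
\]
where $\UU_c$ is the restriction of $\UU$ to the compactly supported subspace $\sV^\cX_c$. Conditions~\eqref{i:c1} and~\eqref{i:c2} of the refined CTC are then used, not to control the positive-$z$ part of some $F^\cY$ as you suggest, but to prove directly that this composite is symplectic (Lemma~\ref{l:5.6}): condition~\eqref{i:c1} guarantees the non-equivariant limit is an isomorphism on compact support, and condition~\eqref{i:c2} ensures that after dividing by $e_{\CC^*}(-K)=d(\lambda+H)$ and passing to the limit, the surviving terms land in the span of $\{\tilde\ii_g H^k\}_{0\le k\le N_g-2}$, on which $i^*$ is a symplectic isomorphism. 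With $\VV$ already a linear symplectic map, what you call the ``main obstacle'' (compatibility of analytic continuations) dissolves: the analytic continuation is performed only on the $\cY$ side via the rCTC, and $\VV$, being $z$-linear and independent of the K\"ahler variable, commutes with it automatically.

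Finally, the paper verifies both inclusions $\VV(\sL^{(Q,G)})\subseteq\widetilde{\sL^\cZ}$ and $\VV(\sL^{(Q,G)})\supseteq\widetilde{\sL^\cZ}$ separately (Theorem~\ref{t:rctcilgcy}), by applying Lemma~\ref{l:good functions} to $J^\cX$ in one direction and running the argument from $z\partial_{t^{e_{\CC^*}(-K)}}J^\cY$ in the other. Your proposal only sketches one direction.
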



\subsection{rCTC implies LG/CY}

In this section we give an explanation of the similarity between these two correspondences.  Namely we show that the refined crepant transformation conjecture implies the LG/CY correspondence.  

\begin{lemma}\label{l:5.6} Assuming Conjecture~\ref{c:rCTC}, define the map $\VV$ by
\[\VV := \left( i^*\circ \Delta^\diamond \circ \UU_c \circ (\Delta^\circ)^{-1} \right)|_{\lambda = 0}.\]  Then $\VV$ is symplectic.
\end{lemma}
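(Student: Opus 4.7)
The strategy is to decompose $\VV$ into its four constituent factors, verify that each is symplectic on the appropriate subspace, and then combine. The map $\VV$ sits between $\sV^{(Q,G)}$ and $\sV^{\cZ}$, passing through the intermediate equivariant spaces $\sV^\cX$ and $\sV^\cY$, whose symplectic forms restrict to well-defined non-equivariant pairings on compactly supported classes. This compactly supported subspace is precisely where each piece of $\VV$ is designed to operate.

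First, by Conjecture~\ref{c:rCTC} the transformation $\UU$ is symplectic, and condition~\eqref{i:c1} guarantees that the restriction $\UU_c$ to compactly supported classes has a well-defined non-equivariant limit, which remains symplectic since the pairings on $\sV^\cX_c$ and $\sV^\cY_c$ converge as $\lambda \to 0$. Second, the map $(\Delta^\circ)^{-1}: \sV^{(Q,G)} \to \sV^\cX$ is symplectic on its image: a direct pairing computation using Lemma~\ref{l:1.10} shows that the FJRW pairing $\langle \varphi_h, \varphi_{h'}\rangle^{(Q,G)}$ matches the non-equivariant limit of the local GW pairing $\langle \ii_{h\jj}, \ii_{h'\jj}\rangle^{[\CC^N/G]}$ on narrow sectors, and the sign $(-1)^{\sum_j m_j(g)}$ built into $\Delta^\circ$ is precisely what is needed to reconcile the conventions.

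For the right-hand composition $i^* \circ \Delta^\diamond$, I would use the factorization $\Delta^\diamond = e^{\pi\sqrt{-1}\,c_1(E)/z} \circ i_{E^\vee}$ already employed in the proof of Theorem~\ref{t:general2}, with $E = K^{-1}$ on $\PP(G)$. The factor $i_{E^\vee}$ is symplectic by quantum Serre duality (Theorem~\ref{t:qsd}), while $e^{\pi\sqrt{-1}\,c_1(E)/z}$ is multiplication by an element of Givental's symplectic loop group, hence symplectic. Composing with $i^*$, the adjunction identity $\int_\cZ i^*\alpha \cdot i^*\beta = \int_{\PP(G)} \alpha\beta\, c_1(E)$ matches the non-equivariant limit of the twisted pairing $\int_{\PP(G)} \alpha\beta\, e_{\CC^*}(E)$, so $i^* \circ \Delta^\diamond$ is symplectic on classes for which this limit is defined.

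The main obstacle, and the technical point that requires most care, is to verify that the non-equivariant limit of $i^* \circ \Delta^\diamond \circ \UU_c$ actually exists. This is precisely where condition~\eqref{i:c2} of the refined CTC enters: it forces the image of $\UU_c$ to lie in the $\CC((z^{-1}))$-span of $(\lambda+H)\cdot H^*_{CR}(\cY)[\lambda]$, and the factor $(\lambda + H) = e_{\CC^*}(E)$ cancels exactly the $1/e_{\CC^*}(E)$ appearing inside $\Delta^\diamond$. Once this cancellation is established, the four symplectic factors compose to produce the symplectic map $\VV$; the rest of the argument is bookkeeping.
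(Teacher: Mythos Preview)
Your decomposition into the four factors $(\Delta^\circ)^{-1}$, $\UU_c$, $\Delta^\diamond$, $i^*$ is exactly the paper's strategy, and your verification that each preserves the relevant pairing is correct as far as it goes. There is, however, a genuine gap at the step involving $i^*$, and it is not mere bookkeeping.

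The adjunction identity you quote shows that $i^*\circ\Delta^\diamond$ preserves the symplectic form, but the pullback $i^*: H^*_{CR}(\PP(G))\to H^*_{CR}(\cZ)$ has a nontrivial kernel --- it annihilates the top-degree classes $\tilde\ii_g H^{N_g-1}$ --- so form-preservation alone does not give you a symplectic \emph{isomorphism}. You must explain why, after taking the non-equivariant limit, the image of $\Delta^\diamond\circ\UU_c\circ(\Delta^\circ)^{-1}$ meets $\ker(i^*)$ only in zero. The paper handles this by observing that in the $(-K,\bs)$-twisted pairing the classes $\tilde\ii_g H^{N_g-1}$ pair to $\sO(\lambda)$ against everything, so at $\lambda=0$ they constitute the radical of the (now degenerate) form on $\sV^{-K,\bs}$. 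Since the image of the first three factors inherits the non-degenerate form from $\sV^{(Q,G)}$, it intersects that radical trivially; hence $i^*$ restricted to it is injective, and the paper then checks that $i^*$ restricted to the span of $\{\tilde\ii_g H^k\}_{0\le k\le N_g-2}$ is a symplectic isomorphism onto $\sV^\cZ$.

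Concretely: condition~\eqref{i:c2} lets you write $\UU_c(\alpha)=(\lambda+H)\sum_{g,k}C^\alpha_{g,k}(\lambda)\tilde\ii_g H^k$, and the $(\lambda+H)$ does cancel against $\Delta^\diamond$ as you say. But after cancellation the sum still runs over $0\le k\le N_g-1$, and you need the separate argument above to discard the $k=N_g-1$ term before applying $i^*$. This is the part of the proof that actually uses both halves of the refined conjecture in tandem, and it is precisely what you have labelled bookkeeping.
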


\begin{proof}
We will use conditions~\eqref{i:c1} and~\eqref{i:c2} from Conjecture~\ref{c:rCTC}.  First, for any \emph{nonequivariant} compactly supported class $\alpha \in H^*_{CR, c}(\cX)((z^{-1})) \subset H^*_{CR}(\cX)((z^{-1}))$, $\UU_c(\alpha) = \UU(\alpha)$ may be written as 
\[(\lambda + H)\cdot \sum_{\substack{g \in G\\ 0 \leq k \leq N_g-1}} C^\alpha_{g,k}(\lambda) \ii_g H^k,\] where $C^\alpha_{g, k}(\lambda) \in \CC[\lambda]$.  Note that in the non-equivariant limit of $\UU_c(\alpha)$, the terms of the form $(\lambda + H) C^\alpha_{g, N_g - 1}(\lambda) \ii_g H^{N_g-1}$ vanish.  By condition~\eqref{i:c1}, the map 
\[ (\UU_c)|_{\lambda = 0}: \alpha \mapsto H \cdot \sum_{\substack{g \in G\\ 0 \leq k \leq N_g-2}} C^\alpha_{g, k}(0) i^*( \ii_g H^k)\] 
is an isomorphism.

For $\alpha, \beta \in H^*_{CR, c}(\cX)((z^{-1}))$, the pairing $\br{ \alpha, \beta } \in \CC$.  Since $\UU$ and $\Delta^\diamond$ are symplectic this implies that 
\[
 \br{\Delta^\diamond \circ \UU(\alpha), \Delta^\diamond \circ \UU(\beta)} = \br{ \Delta^\diamond \circ \UU_c(\alpha), \Delta^\diamond \circ \UU_c(\beta)} \in \CC
\] and thus 
\[ \br{ (\Delta^\diamond \circ \UU_c)|_{\lambda = 0}(\alpha),   (\Delta^\diamond \circ \UU_c)|_{\lambda = 0}(\beta)} = \lim_{\lambda \mapsto 0}\br{ \Delta^\diamond \circ \UU_c(\alpha), \Delta^\diamond \circ \UU_c(\beta)} = \br{\alpha, \beta}.
\]  
Therefore 
\[
(\Delta^\diamond \circ \UU_c)|_{\lambda = 0}: \alpha \mapsto \sum_{\substack{g \in G\\ 0 \leq k \leq N_g-1}} C^\alpha_{g, k}(0) i^*( \ii_g H^k)
\] 
is in fact a symplectic isomorphism.  On the other hand, examining the pairing given by the twisted theory of $(-K, \bs)$, where $\bs$ is as in \eqref{e:specCI}, we see that when terms of the form $\ii_g H^{N_g-1}$ are paired with elements of $H^*_{CR}(\cY)[\lambda]$, the result lies in $\sO(\lambda)$.  Therefore these terms do not contribute to the pairing $\br{ (\Delta^\diamond \circ \UU_c)|_{\lambda = 0}(\alpha),   (\Delta^\diamond \circ \UU_c)|_{\lambda = 0}(\beta)} $ for $\alpha, \beta \in H^*_{CR, c}(\cX)((z^{-1}))$.  So in fact we conclude that the map \[\alpha \mapsto \sum_{\substack{g \in G\\ 0 \leq k \leq N_g-2}} C^\alpha_{g, k}(0) i^*( \ii_g H^k)\] is symplectic.

Note that $\lim_{\lambda \mapsto 0} i^*\circ \Delta^\diamond \circ \UU_c (\alpha)$ is equal to
\[i^* \Big(\sum_{\substack{g \in G\\ 0 \leq k \leq N_g-2}} C^\alpha_{g, k}(0)( \ii_g H^k)\Big).\]  
Since $i^*: \sV^{-K, \bs} \to \sV^\cZ$ is a symplectic isomorphism when restricted to the span of 
$\left\{ \ii_g H^k\right\}_{{g \in G, \, 0 \leq k \leq N_g-2}}$,
the map $\alpha \mapsto \lim_{\lambda \mapsto 0} i^*\circ \Delta^\diamond \circ \UU_c (\alpha)$ will be a symplectic isomorphism.  

$\Delta^\circ$ is a symplectic isomorphism when restricted to the span of elements of compact support.  Thus $\VV$, defined as the composition of the above map with $(\Delta^\circ)^{-1}$, is as well.

\end{proof}

\begin{lemma}\label{l:good functions} Assume there exists a function $I^{\cX}(\bt, z)$ lying on $\sL^\cX$ such that for any group element $g$ which fixes more than the origin, there is a power of $\lambda^{N_g}$ in the $\ii_g$-coefficient of $\frac{\partial}{\partial t} I^{\cX}(\bt, z)$.
Then the function 
\[I^{(Q, G)}(\bt, z): = \lim_{\lambda \mapsto 0} \Delta^\circ \left( z \frac{\partial}{\partial t} \left( I^{\cX}(\bm{t}, z) \right) \right)
\] lies on $\sL^{(Q,G)}$, and 
the symplectic transformation $\VV$ maps $I^{(Q, G)}(\bt, z)$  to the analytic continuation of $\sL^\cZ$.
\end{lemma}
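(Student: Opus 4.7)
The plan is to apply Theorems~\ref{t:general1} and~\ref{t:general2} to the two arrows of the main diagram, using the refined CTC (Conjecture~\ref{c:rCTC}) as the bridge between them. Set $F^\cX(\bt,z) := z\,\frac{\partial}{\partial t}I^\cX(\bt,z)$. Since $I^\cX$ lies on the overruled Lagrangian cone $\sL^\cX$, its derivative $\frac{\partial}{\partial t}I^\cX$ is a tangent vector at $I^\cX$, so $F^\cX \in zL_{I^\cX} \subset \sL^\cX$ by the overruled property~\eqref{e:overruled}. Multiplication by $z$ preserves powers of $\lambda$, so the $\ii_g$-coefficient of $F^\cX$ is still divisible by $\lambda^{N_g}$; consequently, for $g$ with $N_g>0$ this coefficient vanishes as $\lambda\to 0$, while for $g$ fixing only the origin $(N_g=0)$ the coefficient has a finite limit. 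Hence the projection of $\lim_{\lambda\to 0}F^\cX$ to $\sV^\cX_+$ is well-defined and supported precisely on $\ii_g$ for those $g$ fixing only the origin. Theorem~\ref{t:general1} then yields $I^{(Q,G)} = \lim_{\lambda\to 0}\Delta^\circ(F^\cX) \in \sL^{(Q,G)}$, proving the first claim.

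For the second claim, the same hypothesis implies that $F^\cX$ lies in the compactly supported subspace $\sV^\cX_c$, since the compactly supported equivariant Chen--Ruan cohomology of $\cX = [\CC^N/G]$ in the $g$-sector is generated by $\lambda^{N_g}\ii_g$. Hence $\UU_c(F^\cX)$ is defined, and by Conjecture~\ref{c:rCTC} it lies on the analytic continuation of $\sL^\cY \cap \sV^\cY_c$. Condition~\eqref{i:c2} of the refined CTC further asserts that $\UU_c(F^\cX)$ lies in the $\CC((z^{-1}))$-span of $(\lambda+H)\cdot H^*_{CR}(\cY)[\lambda]$. Since the $\CC^*$-equivariant Euler class of the anticanonical bundle $E = -K_{\PP(G)}$ (the bundle cutting out $\cZ$) equals $d(\lambda+H)$, we may write $\UU_c(F^\cX) = e_{\CC^*}(E)\cdot \widetilde F$ with $\widetilde F$ polynomial in $\lambda$, so $\widetilde F$ admits a well-defined non-equivariant limit. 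Theorem~\ref{t:general2} (applied after the analytic continuation, which commutes with the analytic maps $i^*$, $\Delta^\diamond$, and $\UU_c$) then gives that $\lim_{\lambda\to 0}\,i^*\circ\Delta^\diamond\circ\UU_c(F^\cX)$ lies on the analytic continuation of $\sL^\cZ$.

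It remains to identify this with $\VV(I^{(Q,G)})$. Since $\Delta^\circ$ is a $\CC$-linear isomorphism independent of $\lambda$, it commutes with $\lim_{\lambda\to 0}$, and $(\Delta^\circ)^{-1}(I^{(Q,G)}) = \lim_{\lambda\to 0}F^\cX$. Reading off the formula $\VV = (i^*\circ\Delta^\diamond\circ\UU_c\circ(\Delta^\circ)^{-1})|_{\lambda=0}$ from Lemma~\ref{l:5.6} yields
\[
\VV(I^{(Q,G)}) \;=\; \lim_{\lambda\to 0}\, i^* \circ \Delta^\diamond \circ \UU_c(F^\cX),
\]
which by the previous paragraph lies on the analytic continuation of $\sL^\cZ$.

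The principal obstacle is the careful coordination of the $\lambda^{N_g}$ hypothesis with both sides of the diagram. It must simultaneously (i) place $F^\cX$ into the hypothesis of Theorem~\ref{t:general1} for the \mlk correspondence, and (ii) place $F^\cX$ in the compactly supported subspace $\sV^\cX_c$ so that, after applying the refined CTC, its image has the $e_{\CC^*}(E)\widetilde F$ shape required by quantum Serre duality (Theorem~\ref{t:general2}). That these two conditions align is precisely what allows the \mlk correspondence, the refined CTC, and quantum Serre duality to close up into the commuting square at the heart of the paper. A secondary technical point---that Theorem~\ref{t:general2} persists under analytic continuation---is handled by observing that the statement is local and all the operators involved are analytic.
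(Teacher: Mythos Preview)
Your proof gets the architecture right and correctly invokes Theorem~\ref{t:general1} and Theorem~\ref{t:general2} as the two side arrows of the square, using Conjecture~\ref{c:rCTC} as the bridge. The appeal to the ``in particular'' clause of condition~\eqref{i:c2}---that $\UU_c$ lands in the $(\lambda+H)$-ideal---is a clean way to obtain the factorization $e_{\CC^*}(E)\widetilde F$ needed for Theorem~\ref{t:general2}; the paper reaches the same conclusion by the explicit sector-by-sector computation $\lambda^{N_g}\tilde\ii_g/(\lambda+H)=\sum_{i=0}^{N_g-1}\lambda^{N_g-1-i}(-H)^i\tilde\ii_g$.

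There is, however, a real gap in your final identification. You assert that ``reading off the formula for $\VV$'' yields $\VV(I^{(Q,G)})=\lim_{\lambda\to 0}\,i^*\circ\Delta^\diamond\circ\UU_c(F^\cX)$, but this is exactly the commutativity of the square that must be verified, not assumed. By definition $\VV$ is applied to the \emph{non-equivariant} class $(\Delta^\circ)^{-1}I^{(Q,G)}=\lim_{\lambda\to 0}F^\cX$, which is supported only on the sectors $\ii_g$ with $N_g=0$; thus $\VV(I^{(Q,G)})=\lim_{\lambda\to 0}\bigl(i^*\Delta^\diamond\UU_c(F^\cX|_{\lambda=0})\bigr)$, which is \emph{not} a priori equal to $\lim_{\lambda\to 0}\bigl(i^*\Delta^\diamond\UU_c(F^\cX)\bigr)$. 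The two expressions differ precisely by the contribution of the $N_g>0$ terms of $F^\cX$ along the upper-right path, and this contribution does not vanish for formal reasons (the operator $\Delta^\diamond$ is not polynomial in $\lambda$, so one cannot simply substitute $\lambda=0$ term by term). The paper closes this gap by the key observation that, after applying $\UU$ and dividing by $(\lambda+H)$, the $N_g>0$ part survives in the non-equivariant limit only as a multiple of $H^{N_g-1}\tilde\ii_g$, and this class lies in the kernel of $i^*:H^*_{CR}(\PP(G))\to H^*_{CR}(\cZ)$. Without this check the diagram does not commute on $F^\cX$, and your identification of $\VV(I^{(Q,G)})$ with the point you have produced on $\widetilde{\sL^\cZ}$ is unjustified.
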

\begin{proof}
Note that by assumption 
we can apply Theorem~\ref{t:general1} to deduce that $I^{(Q, G)}(\bt, z)$ lies on $\sL^{(Q,G)}$.

To prove the second part of the lemma, we first claim that 
\[
\lim_{\lambda \mapsto 0} i^* \circ  \Delta^\diamond \circ \UU \left(z \frac{\partial}{\partial t} \left( I^{\cX}(\bm{t}, z) \right) \right)
\] 
lies on the analytic continuation of $\sL^\cZ$.  

To see this let $\widetilde{ I^{\cY}(\bt, z)}$ denote $\UU(I^{\cX}(\bt, z))$.  Conjecture~\ref{c:rCTC} implies that $\widetilde{ I^{\cY}(\bt, z)}$ lies on the analytic continuation of $\sL^\cY$, and therefore by \eqref{e:overruled} so does $z \frac{\partial}{\partial t} \widetilde{ I^{\cY}(\bt, z)}= \UU(z \frac{\partial}{\partial t}I^{\cX}(\bt, z))$.
Therefore the strategy is to show that $z \frac{\partial}{\partial t} \widetilde{ I^{\cY}(\bt, z)}$ satisfies the conditions of Theorem~\ref{t:general2}, i.e. that $z \frac{\partial}{\partial t} \widetilde{ I^{\cY}(\bt, z)}$ may by written as \[e_{\CC^*}(-K) \widetilde{F} (\bt, z),\] where $\widetilde{F}(\bt, z)$ has a well defined non-equivariant limit.  
For each $g \in G$, we will show that $\UU$ maps the part of $z \frac{\partial}{\partial t} I^{\cX}(\bt, z)$
supported on $\cX_g$ to something divisible by $e_{\CC^*}(-K) = d(\lambda + H)$.
%
%
For $g$ such that $N_g = 0$, the statement  follows immediately by condition~\eqref{i:c2} of Conjecture~\ref{c:rCTC} because $\tilde \ii_g = 0$.  For $g$ such that $N_g > 0$, $\tilde \ii_g \neq 0$, but nevertheless
we compute 
\[
\frac{\lambda^{N_g} \tilde \ii_g}{(\lambda + H)} = \sum_{i = 0}^{N_g-1} \lambda^{N_g-1 - i}(-H)^i \ \tilde \ii_g.
\]  
So combining condition~\eqref{i:c2} of Conjecture~\ref{c:rCTC} with the assumptions of the lemma implies the claim.

Given a function $I^{\cX}(\bm{t}, z) $ satisfying the assumptions listed, we have shown that the corresponding $I^{(Q, G)}(\bt, z) $ lies in
$\sL^{(Q, G)}$ and that 
$\lim_{\lambda \mapsto 0} i^* \circ  \Delta^\diamond \circ \UU \left(z \frac{\partial}{\partial t} \left( I^{\cX}(\bm{t}, z) \right) \right)$ lies in the analytic continuation $\widetilde{\sL^\cZ}$ of $\sL^\cZ$.  
Thus to prove that $\VV$ sends $I^{(Q, G)}(\bt, z) $ to $\widetilde{\sL^\cZ}$, it suffices to show that the following diagram commutes when applied to $z \frac{\partial}{\partial t} \left( I^{\cX}(\bm{t}, z) \right)$.
\[
\begin{tikzcd}
  \sL^{\cX} \arrow{r}{\UU} \arrow{d}{ \lim_{\lambda \mapsto 0} \Delta^\circ} & \widetilde{\sL^\cY} \arrow{d}{ \lim_{\lambda \mapsto 0} i^* \circ \Delta^\diamond} \\
  \sL^{(Q,G)} \arrow[dashed]{r}{\VV}
  & \widetilde{\sL^\cZ} . 
\end{tikzcd}
\]

For $g$ such that $N_g > 0$, terms of $z \frac{\partial}{\partial t} \left( I^{\cX}(\bm{t}, z) \right)$ supported in $H^*(\cX_g)$ are in the kernel of the left hand map.  We need therefore to check that these terms are also in the kernel of the composition of the top map with the right hand map.
By the above computation, for $g$ such that $N_g > 0$, the only part of the $\tilde \ii_g$-coefficient of $z \frac{\partial}{\partial t} \widetilde{ I^{\cY}(\bt, z)}$ which survives in the non-equivariant limit is a $\CC((z^{-1}))$-multiple of $\tilde \ii_g H^{N_g-1}$.  This class is in the kernel of the map $i^*: H^*_{CR}([\PP(c_1, \ldots, c_N)/\bar G]) \to H^*_{CR}(\cZ)$.  This implies that the above diagram commutes when applied to $z \frac{\partial}{\partial t} \left( I^{\cX}(\bm{t}, z) \right)$ which proves the claim.

\end{proof}

We arrive at the following.

\begin{theorem}[rCTC implies LG/CY] \label{t:rctcilgcy} Given an LG pair $(Q, G)$ as above with $Q$ a Fermat polynomial and $G$ a subgroup of $SL_N(\CC)$, the refined crepant transformation conjecture for 
$\cY \dasharrow \cX$ (Conjecture~\ref{c:rCTC}) implies the LG/CY correspondence (Conjecture~\ref{c:LGCY}).
\end{theorem}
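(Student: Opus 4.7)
The plan is to put together Lemma~\ref{l:5.6} and Lemma~\ref{l:good functions} and to show that the hypothesis of the latter lemma is satisfied by a sufficiently rich family of functions on $\sL^{\cX}$, so that pushing forward the whole cone $\sL^{(Q,G)}$ through $\VV$ covers $\widetilde{\sL^\cZ}$. Explicitly, I would define $\VV := (i^*\circ \Delta^\diamond\circ \UU_c\circ (\Delta^\circ)^{-1})|_{\lambda=0}$ and take this as the candidate for the LG/CY symplectic transformation. Lemma~\ref{l:5.6} already establishes that $\VV$ is symplectic, so the content of the theorem is the containment of Lagrangian cones.

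First I would produce a family of functions $\{I^{\cX}(\bt,z)\}$ on $\sL^{\cX}$ verifying the divisibility hypothesis of Lemma~\ref{l:good functions}: for every $g\in G$ with $N_g>0$, the $\ii_g$-coefficient of $\tfrac{\partial}{\partial t}I^{\cX}(\bt,z)$ is divisible by $\lambda^{N_g}$. The natural candidate is the equivariant $J$-function $J^{\cX}(\bt,z)$ of the local target $[\CC^N/G]$. The required divisibility is essentially forced by localization: on a sector with isotropy $g$, the invariants are integrals against $1/e_{\CC^*}(R\pi_*\oplus\cL_j)$, and the $N_g$ fixed coordinates contribute a factor proportional to $\prod_{m_j(g)=0}\lambda_j \sim \lambda^{N_g}$ in the denominator, while the only way to obtain an $\ii_g$-class in the output is to pair dually, which shifts this factor to the numerator. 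Granting this, Lemma~\ref{l:good functions} produces a function
\[
I^{(Q,G)}(\bt,z)\ :=\ \lim_{\lambda\to 0}\Delta^\circ\!\left(z\,\tfrac{\partial}{\partial t}I^{\cX}(\bt,z)\right)\ \in\ \sL^{(Q,G)},
\]
and the same lemma yields $\VV(I^{(Q,G)}(\bt,z))\in \widetilde{\sL^\cZ}$.

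Next I would upgrade this pointwise containment to $\VV(\sL^{(Q,G)})\subseteq \widetilde{\sL^\cZ}$ and then to equality. The key observation is that $J^{\cX}(\bt,z)$, as $\bt$ ranges over $H^*_{CR}(\cX)$, parameterizes a generic slice of $\sL^\cX$, so its derivatives in the $H^*_{CR}(\cX)$-directions span the tangent spaces of $\sL^\cX$ along that slice. Passing through $\Delta^\circ$ and the $\mlk$ correspondence (Corollary~\ref{c:Jrels} and Theorem~\ref{t:gw/fjrw}), one checks that the resulting family $\{I^{(Q,G)}(\bt,z)\}_\bt$ is transverse to the ruling of $\sL^{(Q,G)}$, so that by \eqref{e:generic} and \eqref{e:overruled} the entire cone $\sL^{(Q,G)}$ is reconstructed from the tangent spaces $zL_{I^{(Q,G)}(\bt,z)}$. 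Because $\VV$ is a symplectic transformation (Lemma~\ref{l:5.6}) and the diagram in the proof of Lemma~\ref{l:good functions} commutes on derivatives, $\VV$ sends these tangent spaces into the tangent spaces of $\widetilde{\sL^\cZ}$ along a generic slice, giving $\VV(\sL^{(Q,G)})\subseteq \widetilde{\sL^\cZ}$. The reverse inclusion then follows by a dimension count: $\sL^{(Q,G)}$ and $\widetilde{\sL^\cZ}$ are both Lagrangian cones in symplectic spaces of the same rank (matched by $\VV$), and the transverse family already traces out a generic slice on both sides.

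The main obstacle is the verification that the family $\{I^{(Q,G)}(\bt,z)\}_\bt$ is actually a generic slice of $\sL^{(Q,G)}$; this requires threading the change of variables of Corollary~\ref{c:Jrels} through the non-equivariant limit, and checking that derivatives of $I^{\cX}$ supported on non-narrow sectors (which are killed by $\Delta^\circ$ in the limit) are precisely those that should not survive to generate the FJRW-theoretic tangent space. A secondary technical point, which I expect to be routine but needs care, is the $\lambda^{N_g}$ divisibility for $J^{\cX}$: this must be justified uniformly in $\bt$ and $z$, including the contributions of insertions at stacky points in non-fixed sectors. Once these two points are settled, combining Lemma~\ref{l:5.6} and Lemma~\ref{l:good functions} with the generic-slice argument delivers the identification $\VV(\sL^{(Q,G)})=\widetilde{\sL^\cZ}$, which is Conjecture~\ref{c:LGCY}.
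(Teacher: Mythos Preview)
Your forward inclusion is essentially the paper's: one verifies (as in the proof of Theorem~\ref{t:gw/fjrw}) that $J^{\cX}$ satisfies the divisibility hypothesis of Lemma~\ref{l:good functions}, so $\VV$ carries $J^{(Q,G)}$ into $\widetilde{\sL^\cZ}$, and then linearity of $\VV$ together with \eqref{e:Jgens2} and \eqref{e:overruled} gives $\VV(\sL^{(Q,G)})\subseteq\widetilde{\sL^\cZ}$. The paper is slightly more direct here because Theorem~\ref{t:gw/fjrw} already identifies $I^{(Q,G)}$ with $J^{(Q,G)}$ after a change of variables, so there is no separate transversality check to perform.

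Where you and the paper genuinely diverge is the reverse inclusion. You propose a Lagrangian/dimension argument: $\VV(\sL^{(Q,G)})$ and $\widetilde{\sL^\cZ}$ are overruled Lagrangian cones in the same symplectic space, one contained in the other, with state spaces of equal rank, hence equal. The paper instead runs the construction backwards from the $\cY$ side: it takes $z\,\partial_{t^{e_{\CC^*}(-K)}}J^{\cY}$, uses quantum Serre duality (Theorems~\ref{t:qsd} and~\ref{t:qsd2}) to show its image under $i^*\circ\Delta^\diamond$ is $J^{\cZ}$, then applies $\UU^{-1}$ and invokes condition~\eqref{i:c1} of Conjecture~\ref{c:rCTC} to see that the result has compactly supported non-equivariant limit, so Theorem~\ref{t:general1} places it on $\sL^{(Q,G)}$. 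A commutativity check of the square on this specific function then yields $\VV^{-1}(\widetilde{J^\cZ})\in\sL^{(Q,G)}$, hence $\widetilde{\sL^\cZ}\subseteq\VV(\sL^{(Q,G)})$.

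Your abstract argument is cleaner if it goes through, but the obstacle you flag is real: in Givental's formal setting one must justify that the family $\{\VV(J^{(Q,G)}(\bt,-z))\}_{\bt}$ is a \emph{generic} slice of $\widetilde{\sL^\cZ}$ (equivalently, that the induced change of variables on the base is a local isomorphism), and this is not automatic from containment of Lagrangian cones alone. The paper's constructive route sidesteps this entirely, at the cost of having to trace compact-support conditions through $\UU^{-1}$ --- which is exactly where condition~\eqref{i:c1} of the refined conjecture earns its keep a second time, beyond its role in Lemma~\ref{l:5.6}.
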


\begin{proof}
Note first that the $J$-function $J^\cX(\bt, -z)$ satisfies the hypothesis of Lemma~\ref{l:good functions}, as was shown in the proof of Theorem~\ref{t:gw/fjrw}.  Therefore $\VV$ maps $J^{(Q, G)}(\bt, -z)$ to  $\sL^\cZ$.  This implies that $\VV(\sL^{(Q, G)}) \subseteq \widetilde{\sL^\cZ}$.  

On the other hand, consider the function 
$J^\cY(\bt, z)$.  By applying Theorem~\ref{t:qsd} to the particular specializations $\bs$ and $\bs '$ of Section~\ref{sss:lici}, we deduce that
there exists a choice of $\bt '$ such that 
$$\Delta^\diamond \left( z \frac{\partial}{\partial t^{e_{\CC^*}(-K)}} J^\cY(\bt ', z) \right) =  J^{-K, \bs}(\bt, z).$$ 
  Standard argument (see e.g. \cite{Co}, Theorem 1.1) shows that the right hand side has a well defined non-equivariant limit.  Since the map $\Delta^\diamond$ involves division by $e_{\CC^*}(-K)$, we conclude that $z \frac{\partial}{\partial t^{e_{\CC^*}(-K)}} J^\cY(\bt ', z)$
  may be written in the form $e_{\CC^*}(-K) \widetilde{F}(\bt, z)$ where $\widetilde{F}(\bt, z)$ has a well defined non-equivariant limit.  
  So by Theorem~\ref{t:general2},
$$\lim_{\lambda \mapsto 0} i^* \circ \Delta^\diamond \left(-z \frac{\partial}{\partial t^{e_{\CC^*}(-K)}}J^\cY(\bt ', -z)\right) \in \sL^{\cZ}.$$  
Analyzing the non-negative $z$-coefficients of the right hand side yield that in fact \[\lim_{\lambda \mapsto 0} i^* \circ \Delta^\diamond \left(-z \frac{\partial}{\partial t^{e_{\CC^*}(-K)}}J^\cY(\bt ', -z)\right) = J^\cZ(\bt, -z).\]
%
Note furthermore that the non-equivariant limit of $z \frac{\partial}{\partial t^{e_{\CC^*}(-K)}}J^\cY(\bt ', z)$ is contained in the span of classes of compact support, thus 
$$\lim_{\lambda \mapsto 0} \UU^{-1}\left(z \frac{\partial}{\partial t^{e_{\CC^*}(E)}}J^\cY(\bt ', z)\right)$$ 
lies in compact support by assumption (1) of Conjecture~\ref{c:rCTC}.  By Theorem~\ref{t:general1} we have that 
$$\lim_{\lambda \mapsto 0} \Delta^\circ \circ \UU^{-1}\left( \widetilde{ z \frac{\partial}{\partial t^{e_{\CC^*}(E)}}J^\cY(\bt ', z)}\right) \in \sL^{(Q,G)},$$ 
where 
$\widetilde{(-)}$ denotes analytic continuation.

Next, note that in the non-equivariant limit of $\widetilde{ z \frac{\partial}{\partial t^{e_{\CC^*}(E)}}J^\cY(\bt ', z)}$ , the coefficients of $H^{N_g-1}\tilde \ii_g$ in $\widetilde{F}(\bt, z)$ do not contribute.  In other words the diagram
\[
\begin{tikzcd}
  \sL^{\cX}  \arrow{d}{ \lim_{\lambda \mapsto 0} \Delta^\circ} & \arrow{l}{\UU^{-1}} \widetilde {\sL^\cY} \arrow{d}{ \lim_{\lambda \mapsto 0} i^* \circ \Delta^\diamond} \\
  \sL^{(Q,G)}
  & \arrow[dashed]{l}{\VV^{-1}} \widetilde{ \sL^\cZ} . 
\end{tikzcd}
\]
commutes when applied to
 $\widetilde{ z \frac{\partial}{\partial t^{e_{\CC^*}(E)}}J^\cY(\bt ', z)}$.  Therefore $\VV^{-1} \left(\widetilde{J^\cZ(\bt, z)}\right)$ lies in $\sL^{(Q, G)}$.  Thus $\VV(\sL^{(Q, G)} )\supseteq \widetilde{\sL^\cZ}$.

\end{proof}

\section{A proof of the LG/CY correspondence}\label{s:app}
In this section we give a proof of a weak form of the crepant transformation conjecture, which was essentially known already to the experts, and use it in combination with the results of the previous section to deduce the LG/CY correspondence for Fermat polynomials. 

\subsection{The crepant transformation conjecture}

Let the setup be as in Section~\ref{s:ctclgcy}.
The crepant transformation conjecture states that there exists a symplectic transformation $\UU$ which sends $\sL^{\cX}$ to the analytic continuation of $\sL^{\cY}$, thus identifying the two cones.  We will prove a slightly weaker version of this:  we construct two functions $I^{\cX}$ and $I^{\cY}$ which lie on $\sL^{\cX}$ and $\sL^{\cY}$ respectively, and show they are related by analytic continuation and symplectic transformation.  

\begin{theorem}
\label{t:CRCsympl}  There is an explicit linear symplectic transformation $\UU: \sV^{\cY} \to \sV^{\cX}$ which identifies the $I$-function $I^{\cX}$ with the analytic continuation of $I^{\cY}$.
Furthermore, the transformation $\UU$  is induced by a Fourier--Mukai transformation in the sense of Theorem~4.26 of \cite{CIJ}.
\end{theorem}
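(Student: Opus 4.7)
The plan is to reduce the theorem to the toric crepant transformation framework of Coates--Iritani--Jiang \cite{CIJ}. First I would observe that both $\cX = X_{\Sigma'}$ and $\cY = X_{\widetilde\Sigma}$ are semi-projective toric Deligne--Mumford stacks, and that the extended stacky fan of $\cY$ is obtained from that of $\cX$ by a single toric star-subdivision along the ray $(\mathbf{0},1)$. Hence they arise from adjacent GIT chambers of a common extended fan, which is precisely the setting in which the CIJ theorem provides an explicit symplectic transformation identifying suitable $I$-functions via analytic continuation along a path in the secondary/K\"ahler moduli space.

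Second, I would produce the two $I$-functions explicitly. For $\cY$, the Coates--Corti--Iritani--Tseng toric mirror theorem gives a hypergeometric series $I^{\cY}(\bt,z)$ on $\sL^{\cY}$ expanded around the large-radius point $q=0$; the factor coming from the extra fiber ray contributes the $\CC^*$-equivariant Gamma ratio $\prod_{k}(-d\lambda + kz)$ which encodes the local geometry of $K_{\PP(G)}$. For $\cX = [\CC^N/G]$ the analogous $I$-function $I^{\cX}(\bt,z)$ is the equivariant orbifold hypergeometric series expanded around the orbifold point $t=0$, with summation over group elements $g\in G$ producing one component for each twisted sector $\ii_g$. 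Each factor of $\Gamma$ and $1/\Gamma$ is read off directly from the weights $c_j$ and the characters $-c_j\lambda$, and verifying that these I-functions lie on the respective Lagrangian cones is a standard application of the mirror theorems once the stacky fans are identified.

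Third, I would carry out the analytic continuation by a Mellin--Barnes integral in the variable $t$ conjugate to $\ii_\jj$, using $q = t^{-d}$. The Gamma-function identities that appear when moving the integration contour between residues at the $\cY$-poles and the $\cX$-poles produce the explicit matrix of $\UU$; the upper-triangular shape of $\UU$ required by condition \eqref{i:c2} of Conjecture~\ref{c:rCTC} comes from the fact that the added ray $(\mathbf{0},1)$ couples $\ii_g$ only to sectors $\tilde\ii_{g\jj^b}$ with $0\le b < d$. Symplecticity and the Fourier--Mukai interpretation are then inherited from the general CIJ result, which expresses $\UU$ in terms of the $\widehat\Gamma$-class and a Fourier--Mukai kernel on equivariant $K$-theory coming from variation of GIT. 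The main obstacle I anticipate is purely bookkeeping: matching the equivariant conventions (the action with weights $-c_j\lambda$ on $\cX$ and $-d\lambda$ on the fiber of $\cY$) with those of \cite{CIJ}, and verifying that the partial resolution $\cY\to|\cX|$ fits the hypotheses of their Theorem~4.26 despite being only a partial (not full) resolution; once these identifications are made, the proof is essentially a citation.
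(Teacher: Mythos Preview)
Your proposal is correct and follows essentially the same route as the paper: explicit $I$-functions from the CCIT toric mirror theorems, Mellin--Barnes analytic continuation in the $t\leftrightarrow q=t^{-d}$ variable, and an appeal to \cite{CIJ} for symplecticity and the Fourier--Mukai interpretation. The only difference is emphasis: the paper writes out the $\hat\Gamma$-class factorisation $I^\bullet = z^{1-\Gr}\hat\Gamma(\bullet)(2\pi i)^{\deg_0/2}H^\bullet$ and performs the contour shift on $H^\cY$ explicitly to obtain the closed formula $\overline{\UU}:\ii_g\mapsto\sum_{0\le b<d}\frac{e^{d(\lambda+H)}-1}{d(e^{(\lambda+H)}\xi^b-1)}\tilde\ii_{g\jj^{-b}}$, whereas you describe this step more schematically; but you have all the right ingredients and anticipated the correct bookkeeping issues.
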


\begin{remark}
 A general proof of the crepant transformation conjecture, which encompasses Theorem~\ref{t:CRCsympl} is also given in \cite{CIJ}.  To apply Theorem~\ref{t:CRCsympl} towards the LG/CY correspondence, we require specific properties of the functions $I^\cX$ and $I^\cY$ as well as $\UU$, as given in Conjecture~\ref{c:rCTC}.  These properties are not readily apparent in \cite{CIJ}.  Thus we have chosen to explicitly compute $I^\cX$, $I^\cY$, and $\UU$ below.
\end{remark}

\begin{remark}
Because we use so-called \emph{small $I$-functions}, the above theorem does not quite imply the full correspondence between Lagrangian cones.  To recover the full statement one would need to construct \emph{big $I$-functions} as in \cite{CFK} which determine the entire Lagrangian cones.  For the purposes of this paper we content ourselves with the restricted statement.
\end{remark}
%
We first calculate the respective $I$-functions.   

\subsubsection{Setting notation}
Recall that we have a natural choice of basis for the equivariant cohomology of $\cX$ given by $\{\ii_g\}_g \in G$ where $\ii_g$ is the fundamental class of $\cX_g$.  In a slight abuse of notation, we also use $\ii_g$ to denote the corresponding class in $H^*_{CR}(BG)$.  Let $t^g$ denote the dual coordinate to $\ii_g$.  As before we distinguish the dual coordinate to $\ii_\jj$, denoting it as simply $t$.  This will be the analytic continuation coordinate in Theorem~\ref{t:CRCsympl}.  

\begin{notation}\label{n:S}
We let $\{g_s\}_{s \in S}$ denote the set of elements of $G$ which fix at least one coordinate of $\CC^N$ $(N_g > 0)$.  
\end{notation}
For notational convenience we will write $I \cY = \coprod_{g \in G} \cY_g$, with the understanding that $\cY_g$ is empty unless $g \in \{g_s\}_{s \in S}$. 
We also let $t^g$ denote the dual coordinate of $\tilde \ii_g$ for $g \in G$, and let $q$ denote the exponential of the dual coordinate to $H$.  
\subsubsection{The $I$-function of $\cX$}

We consider the $J$-function of $BG$, where the domain has been restricted to the span of $\{\ii_\jj\} \cup \{\ii_{g_s}\}_{s \in S}$.  By Lemmas~\ref{l:2.1} and~\ref{l:2.2}, this coincides with a restriction of $J^{0, \bo}(\bt, z)$ from Lemma~\ref{l:jc0}:
\begin{align*}
J^{BG}(t, \bt, z) 
& = 
z\sum_{\bv{k}\in(\ZZ_{\geq 0})^{S}}\prod_{s\in S}\frac{(t^{g_s})^{k_s}}{z^{k_s}k_s!}
\sum_{k_0\geq 0}\frac{t^{k_0}}{z^{k_0}k_0!}\ii_{\jj^{k_0}\prod_s g_s^{k_s}}. 
\end{align*}

Using the twisted theory technology, one may alter $J^{BG}(t, \bt, z)$ by a \emph{hypergeometric modification} (see \cite{CCIT}) to obtain a function $I^{\cX}(t, \bt, z)$ which generates $\sL^\cX$ in the sense of \eqref{e:generic}.
Let $a(\bv{k})^j=\sum_{s}k_s m_j(g_s)$.  Define the modification factor
\[
M(k_0,\bv{k}) := \prod_{j=1}^N \prod_{l=0}^{\lfloor k_0c_j/d+a(\bv{k})^j \rfloor-1}\Big(-c_j\lambda-(\langle k_0 c_j/d+a(\bv{k})^j  \rangle +l)z\Big)
\]
where $\langle -\rangle$ denotes the fractional part. Then $I^{\cX}(t, \bt, z)$ is defined as 
\begin{equation}\label{e:IX}
I^{\cX}(t, \bt,z)= z t^{d\lambda/z}
\sum_{\bv{k}\in(\ZZ_{\geq 0})^{S}}\prod_{s\in S}\frac{(t^{g_s})^{k_s}}{z^{k_s}k_s!}\sum_{k_0\geq 0}\frac{M(k_0,\bv{k})t^{k_0}}{z^{k_0}k_0!}\ii_{\jj^{k_0}\prod_s g_s^{k_s}}.
\end{equation}
The above modification factor is explained in \cite{CCIT}, where it is proven that $I^{\cX}(t, \bt, z)$ lies on $\sL^\cX$.

\begin{lemma}[Corollary 5.1 \cite{CCIT}]\label{l:LX}
The function $I^{\cX}(t, \bt, z)$ lies on the Lagrangian cone $\sL^\cX$.\footnote{The $I$-function above is often commonly written without the factor of $t^{d\lambda/z}$.  However due to the string equation, multiplication by this factor preserves the cone $\sL^\cX$.  See the remark after Corollary~1 in \cite{CG} for more details.} 

\end{lemma}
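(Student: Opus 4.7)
The plan is to deduce Lemma~\ref{l:LX} by applying Theorem~\ref{t:symplectictransformation} in the case $c=0$ to identify $I^{\cX}$ with the image of the untwisted $J$-function of $BG$ under the symplectic transformation $\Delta^{0}$, after the specialization of $\bs$ from Corollary~\ref{c:2.4}.

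First, by Lemmas~\ref{l:2.1} and~\ref{l:2.2}, the stack $W^{0}_{0,n,G}$ is a $\prod_{j}\mu_{\bar{d}}$-gerbe over $\sMbar^{\bar{d}}_{0,n}(BG)$, so after rescaling by the gerbe degree, $J^{BG}(t,\bt,z)$ corresponds under $i_{0}$ to a point of the untwisted cone $\sL^{0,\bo}$. By Theorem~\ref{t:symplectictransformation}, the operator $\Delta^{0}$ maps $\sL^{0,\bo}$ onto $\sL^{0,\bs}$, and by Corollary~\ref{c:2.4}, the specialization $s^{j}_{0}=-\ln(-c_{j}\lambda)$, $s^{j}_{k}=(k-1)!/(c_{j}\lambda)^{k}$ for $k>0$ identifies $\sL^{0,\bs}$ with $\sL^{\cX}$. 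Consequently $\Delta^{0}(J^{BG})$ automatically lies on $\sL^{\cX}$, and it remains only to check that it equals $I^{\cX}(t,\bt,z)$.

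The remaining check is a sector-by-sector computation. The operator $\Delta^{0}$ acts on the basis vector $\phi^{0}_{g}$ (with multiplicities $m_{j}=m_{j}(g)$) as multiplication by $\prod_{j=1}^{N}\exp\!\left(\sum_{k\geq 0}s^{j}_{k}B_{k+1}(m_{j})z^{k}/(k+1)!\right)$. On the component of $J^{BG}$ indexed by $(k_{0},\bv{k})$, the relevant group element is $\jj^{k_{0}}\prod_{s}g_{s}^{k_{s}}$, whose multiplicity on the $j$th line is $\langle m_{j}\rangle$ with $m_{j}:=k_{0}c_{j}/d+a(\bv{k})^{j}$. Substituting $w=c_{j}\lambda/z$ into the classical asymptotic expansion
\[
\log\Gamma(w+x) \;\sim\; \bigl(w+x-\tfrac{1}{2}\bigr)\log w - w + \tfrac{1}{2}\log(2\pi) + \sum_{k\geq 1}\frac{B_{k+1}(x)}{k(k+1)}\,w^{-k},
\]
the $\Delta^{0}$-scalar is seen to agree, up to a factor depending only on $w$ (that is, independent of $k_{0}$ and $\bv{k}$), with the Pochhammer ratio
\[
\frac{\Gamma(c_{j}\lambda/z + m_{j})}{\Gamma(c_{j}\lambda/z + \langle m_{j}\rangle)} \;=\; \prod_{l=0}^{\lfloor m_{j}\rfloor-1}\!\left(\frac{c_{j}\lambda}{z} + \langle m_{j}\rangle + l\right).
\]
Taking the product over $j$, the Calabi--Yau condition $\sum_{j} c_{j}=d$ collapses the residual $(k_{0},\bv{k})$-independent prefactor into the single overall $t^{d\lambda/z}$ displayed in~\eqref{e:IX}, which is harmless by the string equation as noted in the footnote, while the $(-z)$-powers produce precisely $M(k_{0},\bv{k})$.

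The main obstacle is this final matching: one must carefully extract the finite Pochhammer-type product $M(k_{0},\bv{k})$ from the infinite asymptotic series generated by $\Delta^{0}$, tracking how the fractional and integer parts of $m_{j}$ distribute between the finite product and the $(k_{0},\bv{k})$-independent residual. The Calabi--Yau constraint is precisely what allows the latter to collect into the clean $t^{d\lambda/z}$ normalization, so that after dividing by this (cone-preserving) factor one recovers exactly the hypergeometric modification defining $I^{\cX}$.
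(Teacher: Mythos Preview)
The paper gives no proof of this lemma: it simply cites Corollary~5.1 of \cite{CCIT}, so there is nothing to compare on the paper's side.  Your strategy---reduce to the untwisted cone via $\Delta^{0}$ and then invoke Theorem~\ref{t:symplectictransformation}---is indeed the spirit of the argument in \cite{CCIT}, \cite{CG}, \cite{Ts}.  However, your execution has a genuine error.

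The operator $\Delta^{0}$ acts \emph{diagonally on the state space}: it multiplies each basis vector $\ii_{g}$ by a scalar that depends only on the multiplicities $m_{j}(g)=\langle k_{0}c_{j}/d+a(\bv{k})^{j}\rangle$, i.e.\ only on the \emph{fractional parts}.  By contrast, the modification factor $M(k_{0},\bv{k})$ is a product of $\lfloor k_{0}c_{j}/d+a(\bv{k})^{j}\rfloor$ linear factors and therefore depends essentially on the \emph{integer parts}.  Two pairs $(k_{0},\bv{k})$ and $(k_{0}',\bv{k}')$ with $\jj^{k_{0}}\prod_{s}g_{s}^{k_{s}}=\jj^{k_{0}'}\prod_{s}g_{s}^{k_{s}'}$ receive the same $\Delta^{0}$-scalar but, in general, different $M$-factors.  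Hence $\Delta^{0}(J^{BG})$ cannot equal $I^{\cX}$ up to a factor independent of $(k_{0},\bv{k})$, and your asymptotic-expansion bookkeeping cannot close the gap: the ``$(k_{0},\bv{k})$-independent residual'' you describe would in fact have to absorb a product of $\lfloor m_{j}\rfloor$ linear factors, which manifestly varies with $(k_{0},\bv{k})$.

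The correct argument does not assert that $\Delta^{0}(J^{BG})=I^{\cX}$.  Rather, one shows that $(\Delta^{0})^{-1}I^{\cX}$ lies on the \emph{untwisted} cone $\sL^{0,\bo}$ by exhibiting it as lying in a ruling $zL_{f}$: after dividing each $(k_{0},\bv{k})$-term of $I^{\cX}$ by the $\Delta^{0}$-scalar on the appropriate sector, the resulting coefficient is a \emph{polynomial in~$z$} (the Pochhammer product divided by the exponential of the Bernoulli series is regular in $z$), and multiplication of derivatives of $J^{BG}$ by such $z$-polynomial coefficients stays inside the ruling by~\eqref{e:Jgens2}.  This is the step you are missing; equality with $J^{BG}$ is neither true nor needed.
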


\subsubsection{The $I$-function of $\cY$}
An $I$-function for projective toric stacks is given in \cite{CCIT2}.  For the case of $[\PP(G)]$ one obtains:
\[
\begin{split}
I^{[\PP(G)]}(q,\bt,z)=zq^{H/z} &\sum_{\bv{k}\in(\ZZ_{\geq 0})^S}\prod_{s\in S}\frac{(t^{g_s})^{k_s}}{z^{k_s}k_s!}\sum_{k_0\geq 0}q^{k_0/d} \\
&\prod_{j=1}^N\prod_{\substack{0<l\leq k_0c_j/d-a(\bv{k})^j\\ \langle l\rangle = \langle k_0c_j/d-a(\bv{k})^j\rangle}}\frac{1}{(c_jH+lz)}\tilde \ii_{\jj^{-k_0}\prod_s g_s^{k_s}}.
\end{split}
\]
We alter this by another hypergeometric modification to obtain an $I$-function for $\cY$:
\begin{equation*}
\begin{split}
I^{\cY}(q,\bt,z)=zq^{H/z}&\sum_{\bv{k}\in(\ZZ_{\geq 0})^S} \prod_{s\in S}\frac{(t^{g_s})^{k_s}}{z^{k_s}k_s!} \\
&\qquad \sum_{k_0\geq 0}q^{k_0/d}\frac{\prod_{l=0}^{k_0-1}(-d(H+\lambda)-lz)}{\prod_{j=1}^N\prod_{\substack{0<l\leq k_0c_j/d-a(\bv{k})^j\\ \langle l\rangle = \langle k_0c_j/d-a(\bv{k})^j\rangle}}(c_jH+lz)}\tilde \ii_{\jj^{-k_0}\prod_s g_s^{k_s}}.
\end{split}
\end{equation*}

\begin{lemma}[Corollary 5.1 \cite{CCIT}, Theorem 21 \cite{CCIT3}]\label{l:LY}
The function $I^{\cY}(t, \bt, z)$ lies on the Lagrangian cone $\sL^\cY$.
\end{lemma}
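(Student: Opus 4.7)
The plan is to derive $I^\cY$ from $I^{[\PP(G)]}$ via the Coates--Givental twisting procedure, in direct parallel with how $I^\cX$ was constructed from $J^{BG}$ in \eqref{e:IX}. First, I would invoke Theorem~21 of \cite{CCIT3} (or Corollary~5.1 of \cite{CCIT} in the small-parameter formulation) to establish that $I^{[\PP(G)]}(q,\bt,z)$ lies on the Lagrangian cone $\sL^{[\PP(G)]}$ of the toric orbifold $[\PP(G)]$. This serves as the base case.

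Second, I would invoke the identification from Section~\ref{sss:lici}: the equivariant Gromov--Witten theory of $\cY=\operatorname{Tot}(K_{\PP(G)})$ coincides with the $\bs^*$-twisted theory of $[\PP(G)]$ with respect to the line bundle $K_{\PP(G)}$, using the inverse equivariant Euler class specialization. Applying the Coates--Givental twisting formula (as in \cite{CG}, \cite{Ts}, \cite{CCIT}) then transforms $I^{[\PP(G)]}$ into a function on $\sL^\cY$. Concretely, each summand indexed by degree $k_0/d$ gets multiplied by the hypergeometric factor predicted by Birkhoff factorization. Since $\CC^*$ acts on the fibers of $K_{\PP(G)}$ with character $-d\lambda$ and since $c_1(K_{\PP(G)})=-dH$ by the Calabi--Yau condition, the equivariant first Chern class is $-d(H+\lambda)$; combined with the degree pairing $\int_\beta c_1(K)=-k_0$, this yields precisely the factor
\[
\prod_{l=0}^{k_0-1}\bigl(-d(H+\lambda)-lz\bigr)
\]
appearing in the definition of $I^\cY$. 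Matching each summand against the untwisted $I^{[\PP(G)]}$ confirms the stated formula.

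The main obstacle is that $K_{\PP(G)}$ has \emph{negative} degree along every effective curve class, so this is the concave twisting regime rather than the usual convex quantum Lefschetz setting: only $R^1\pi_*f^*K$ contributes and the standard product $\prod_{l=1}^{k_0}(c_1(L)+\lambda+lz)$ for convex bundles is not directly applicable. Instead one needs the version of Coates--Givental/Tseng twisting for arbitrary multiplicative classes of complexes, carried out by Birkhoff factorization, which is exactly what is done in \cite{CCIT3}. Once that analysis is in hand, the hypergeometric factor above, with the product running from $l=0$ to $l=k_0-1$, drops out cleanly and the lemma follows.
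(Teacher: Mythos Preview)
Your proposal is correct and matches exactly the approach indicated by the paper. The paper gives no proof of this lemma at all: it simply cites Corollary~5.1 of \cite{CCIT} and Theorem~21 of \cite{CCIT3}, and your two-step argument (first $I^{[\PP(G)]}$ lies on $\sL^{[\PP(G)]}$ via the toric mirror theorem, then the hypergeometric modification by the concave twisting formalism lands the result on $\sL^\cY$) is precisely what those citations encode.
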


Next we will show that the above $I$-functions coincide after analytic continuation and symplectic transformation.

\subsubsection{$\hat{\Gamma}$-classes:}
In order to facilitate our analytic continuation, we will write the $I$-functions in a different form, motivated by Iritani's integral structure for quantum cohomology \cite{Iri}. To do this we define the so-called \emph{$\hat{\Gamma}$-classes}, coming from K--theory. 

For $\cX$, the $\hat{\Gamma}$-class is defined by 
\[\hat{\Gamma}(\cX):=\bigoplus_g\prod_{j=1}^N \Gamma(1-m_j(g)-c_j\lambda).\]
Note that if $g=\jj^{k_0}\prod_{s\in S} g_s^{k_s}$, then $m_j(g)=\langle k_0 c_j/d+a(\bv k)^j\rangle$. 
 
 \begin{notation}
 Define the operator $Gr: H^*_{CR}(\cX) \to H^*_{CR}(\cX)$ by $\alpha \mapsto \frac{\deg(\alpha)}{2}\alpha$ for $\alpha$ of pure degree in $H^*_{CR}(\cX)$. Here by degree we mean the real Chen--Ruan degree.
 
In addition, for $\alpha$ a cohomology class of pure degree in $H^*(I\cX)$ supported on a single connected component, define the function $\deg_0(\alpha)$ to be the \emph{untwisted} degree of $\alpha$ in $H^*(I\cX)$.  
\end{notation}

Consider the modification factor for $I^{\cX}(t,\bt,z)$ again. Using the relation $(x)(x-z)\dots(x-(n-1)z)=z^n\frac{\Gamma(1+x/z)}{\Gamma(1-n+x/z)}$, we obtain
\begin{align*}M(k_0,\bv{k}) &= z^{\sum_j\lfloor k_0c_j/d+a(\bv{k})^j\rfloor}\prod_{j=1}^N \frac{\Gamma(1-c_j\tfrac{\lambda}{z}- \langle k_0c_j/d+a(\bv{k})^j\rangle)}{\Gamma(1-c_j\tfrac{\lambda}{z}- k_0c_j/d-a(\bv{k})^j)}.\end{align*}

Via the above expression and the equality 
\begin{align*}
 k_0 + \sum_{s\in S} \age(g_s) k_s&= \sum_{j=1}^N k_0c_j/d+a(\bv{k})^j \\
 &= \sum_{j=1}^N \lfloor k_0c_j/d+a(\bv{k})^j \rfloor
+ \sum_{j=1}^N \langle k_0c_j/d+a(\bv{k})^j \rangle,
\end{align*}
$I^{\cX}(t,\bt,z)$ simplifies to 
\begin{align*}
I^{\cX}(t, \bt,z) &= zt^{d\lambda/z}\sum_{\bv{k}\in(\ZZ_{\geq 0})^S}\prod_{s\in S}\frac{(t^{g_s})^{k_s}z^{(\age(g_s)-1)k_s}}{k_s!}\sum_{k_0\geq 0}\frac{t^{k_0}}{z^{\sum_j\langle k_0c_j/d+a(\bv{k})^j\rangle}k_0!}\\
  &\qquad \cdot \prod_{j=1}^N \frac{\Gamma(1-c_j\tfrac{\lambda}{z}- \langle k_0c_j/d+a(\bv{k})^j\rangle)}{\Gamma(1-c_j\tfrac{\lambda}{z}- k_0c_j/d-a(\bv{k})^j))}\ii_{\jj^{k_0}\prod_s g_s^{k_s}}\\
	&=z^{1-\Gr}\hat{\Gamma}(\cX)(2\pi i)^{\deg_0/2}H(t,\bt,z)	
\end{align*}
where
\begin{align*}
H(t,\bt,z)&=t^{d\lambda/(2\pi i)}\sum_{\bv{k}\in(\ZZ_{\geq 0})^S}\prod_{s\in S}\frac{(t^{g_s})^{k_s}z^{(\age(g_s)-1)k_s}}{k_s!}\\
  &\qquad\cdot\sum_{k_0\geq 0}\frac{t^{k_0}}{k_0!}\prod_{j=1}^N \frac{1}{\Gamma(1-c_j\tfrac{\lambda}{2\pi i}- k_0c_j/d-a(\bv{k})^j)}\ii_{\jj^{k_0}\prod_s g_s^{k_s}}\\
      &=t^{d\lambda/(2\pi i)}\sum_{\bv{k}\in(\ZZ_{\geq 0})^S}\prod_{s\in S}\frac{(t^{g_s})^{k_s}z^{(\age(g_s)-1)k_s}}{k_s!}\\
  &\cdot \sum_{0\leq m< d}\sum_{k\geq 0}\frac{t^{m+dk}}{(m+dk)!}\prod_{j=1}^N \frac{1}{\Gamma(1-c_j\tfrac{\lambda}{2\pi i}- kc_j- mc_j/d-a(\bv{k})^j)}\ii_{\jj^{m}\prod_s g_s^{k_s}}.
\end{align*}

For $\cY$ we define the $\hat{\Gamma}$-class as the transformation on $H^*_{CR}(\cY)$
\[
\hat{\Gamma}(\cY)=\bigoplus_g\Gamma(1-d(H+\lambda))\prod_{j=1}^N\Gamma(1-m_j(g)+c_jH).
\]

By a similar argument to the previous case, we can also rewrite $I^{\cY}$ in terms of Gamma functions.
\begin{align*}
I^{\cY}(q,\bt,z) &=zq^{H/z}\sum_{\bv{k}\in(\ZZ_{\geq 0})^S}\prod_{s\in S}\frac{(t^{g_s})^{k_s}z^{(\age(g_s)-1)k_s}}{k_s!} \\
 &\qquad \qquad \sum_{k_0\geq 0}\frac{q^{k_0/d}}{z^{\sum_j\langle k_0c_j/d-a(\bv k)^j\rangle}}\frac{\Gamma(1-\tfrac{d(\lambda+H)}{z})}{\Gamma(1-k_0-\tfrac{d(\lambda+H)}{z})}\\
	&\cdot \qquad \qquad \quad\cdot \prod_{j=1}^N\frac{\Gamma(1+c_jH/z-\langle -k_0c_j/d+a(\bv{k})^j \rangle)}{\Gamma(1+c_jH/z+k_0c_j/d-a(\bv{k})^j)}\tilde \ii_{\jj^{-k_0}\prod_s g_s^{k_s}}\\
	&=z^{1-\Gr}\hat{\Gamma}(\cY)(2\pi i)^{\deg_0/2}H^{\cY}(q,\bt,z),
\end{align*}
where
\begin{align*}
H^{\cY}(q,\bt,z) &=q^{H/2\pi i}\sum_{\bv{k}\in(\ZZ_{\geq 0})^S}\prod_{s\in S}\frac{(t^{g_s})^{k_s}z^{(\age(g_s)-1)k_s}}{k_s!} \\
 &\qquad \sum_{k_0\geq 0}q^{k_0/d}\frac{\Gamma(k_0+\tfrac{d(\lambda+H)}{2\pi i})\sin(\pi(k_0+\tfrac{d(\lambda+H)}{2\pi i}))}{\pi \prod_{j=1}^N\Gamma(1+\tfrac{c_jH}{2\pi i}+k_0c_j/d - a(\bv{k})^j)}\tilde \ii_{\jj^{-k_0}\prod_s g_s^{k_s}}\\
	&=q^{H/2\pi i}\sum_{\bv{k}\in(\ZZ_{\geq 0})^S}\prod_{s\in S}\frac{(t^{g_s})^{k_s}z^{(\age(g_s)-1)k_s}}{k_s!}\sum_{0\leq b<d}q^{b/d}(-1)^b \frac{ \sin(\pi\tfrac{d(\lambda+H)}{2\pi i})}{\pi} \\
	&\qquad \cdot\sum_{k\geq 0}q^k(-1)^{dk}\frac{\Gamma(b+dk+\tfrac{d(\lambda+H)}{2\pi i})}{\prod_{j=1}^N\Gamma(1+\tfrac{c_jH}{2\pi i}-a(\bv{k})^j+bc_j/d+k)}\tilde \ii_{\jj^{-b}\prod_s g_s^{k_s}}.
\end{align*}
In the last equality, we have made the substitution $k_0=b+dk$ for $0\leq b < d$. 

In order to show that these functions agree, we must analytically continue $H^{\cY}(q,\bt,z)$. We will use the Mellin--Barnes method.  We may rewrite the above expression using residues:
\begin{align*}
H^{\cY}(q,\bt,z) &= q^{H/2\pi i}\sum_{\bv{k}\in(\ZZ_{\geq 0})^S}\prod_{s \in S} \frac{(t^{g_s})^{k_s}z^{(\age(g_s)-1)k_s}}{k_s!} \\
& \qquad \qquad \cdot\sum_{0\leq b<d}(-1)^{b} q^{b/d}\frac{\sin(\tfrac{d(\lambda+H)}{2i})}{\pi}\tilde \ii_{\jj^{-b}\prod_s g_s^{k_s}}\\
	&\quad-\int_C\frac{e^{-\pi i ds}q^s}{e^{-2\pi i s}-1}\frac{\Gamma(ds+b+\tfrac{d(\lambda+H)}{2\pi i})}{\prod_j\Gamma(1+c_jH/(2\pi i)-a(\bv{k})^j+bc_j/d+c_js)}ds.
\end{align*}
Here $C$ is a contour going clockwise along the imaginary axis, enclosing the non--negative integers to the right, and enclosing no other poles. 

Closing the contour to the left yields the analytic continuation. There are poles at the negative integers due to the exponential, but these vanish due to factors of $H$. Indeed whenever we are supported on $\cY_{\jj^{-b}\prod_s g_s^{k_s}}$, the residue at a negative integer will contribute a factor of $c_jH/z$ for each $j$ which is fixed by $\jj^{-b}\prod_s g_s^{k_s}$. There are $  N_{\jj^{-b}\prod_s g_s^{k_s}}=\dim( \cY_{\jj^{-b}\prod_s g_s^{k_s}}) + 1$ such factors. The other poles are from the Gamma function in the numerator, and occur at 
\[
s=-(H+\lambda)/(2\pi i)-b/d-m/d \quad \text{for} \, m\geq 0. 
\]
The residue of the Gamma function here is 
\[
\Res_{s=-(H+\lambda)/(2\pi i)-b/d-k/d)}\Gamma(ds+b+\tfrac{d(\lambda+H)}{2\pi i})=\frac{(-1)^k}{d\cdot k!}.
\]
We obtain as the analytic continuation ${H^{\cY}}'(q,\bt,z)$: 
\begin{align*}
{H^{\cY}}' &(q,\bt,z) = 2 \pi i q^{H/2\pi i} \sum_{\bv{k}}\prod_{s \in S}\frac{(t^{g_s})^{k_s}z^{(\age(g_s)-1)k_s}}{k_s!} \\
&\quad \cdot \sum_{0\leq b< d}(-1)^{b} q^{b/d}\frac{\sin(d(\lambda+H)/2i)}{\pi}\tilde \ii_{\jj^{-b}\prod_s g_s^{k_s}}\\
	&\quad \cdot\sum_{m\geq 0}\frac{e^{\pi i(b+m)}e^{d(H+\lambda)/2}}{e^{2\pi i(b+m)/d}e^{(\lambda+H)}-1}\cdot\frac{(-1)^m}{d\cdot m!} \frac{q^{-(b+m)/d-(\lambda+H)/(2\pi i)}} {\prod_j\Gamma(1-c_j\lambda/z-mc_j/d-a(\bv{k})^j)}\\
	&=q^{-\lambda/(2\pi i)}\sum_{\bv{k}}\prod_{s \in S}\frac{(t^{g_s})^{k_s}z^{(\age(g_s)-1)k_s}}{k_s!} \\
	&\quad \cdot \sum_{m\geq 0}\frac{q^{-m/d}}{m!\prod_j\Gamma(1-c_j\lambda/(2\pi i)- mc_j/d-a(\bv{k})^j)}\\
	&\qquad \cdot\sum_{0\leq b<d}\frac{e^{d(\lambda+H)/2}(e^{d(\lambda+H)/2}-e^{-d(\lambda+H)/2})}{d(e^{(\lambda+H)}\xi^{b+m}-1)}\tilde \ii_{\jj^{-b}\prod_s g_s^{k_s}}\\
	&=t^{d\lambda/(2\pi i)}\sum_{\bv{k}}\prod_{s \in S}\frac{(t^{g_s})^{k_s}z^{(\age(g_s)-1)k_s}}{k_s!} \\
	&\quad \cdot \sum_{m\geq 0}\frac{t^{m}}{m!\prod_j\Gamma(1-c_j\lambda/(2\pi i)- mc_j/d-a(\bv{k})^j)}\\
	&\qquad \cdot\sum_{0\leq b<d}\frac{e^{d(\lambda+H)}-1}{d(e^{(\lambda+H)}\xi^{b+m}-1)}\tilde \ii_{\jj^{-b}\prod_s g_s^{k_s}}	
\end{align*}
where we have made the substitution $q = t^{-d}$.

\subsubsection{The transformation}
Consider the transformation 
$$\overline{\UU}: H^*_{CR}(\cX) \to H^*_{CR}(\cY)$$ 
given by
\begin{align}\label{e:olu}
\overline{\UU}:
\ii_{g
} \mapsto \sum_{0\leq b<d}\frac{e^{d(\lambda+H)}-1}{d(e^{(\lambda+H)}\xi^{b}-1)}\tilde \ii_{g\jj^{-b}}.
\end{align}

A simple check shows that 
\begin{align*}
\overline{\UU}(
\ii_{g\jj^m}) = \sum_{0\leq b<d}\frac{e^{d(\lambda+H)}-1}{d(e^{(\lambda+H)}\xi^{b+m}-1)}\tilde \ii_{g\jj^{-b}},
\end{align*}
from which one sees that $\overline{\UU}(H^{\cX}(t,\bt,z)) = {H^{\cY}}'(t,\bt,z)$.

%
%
%

%

\begin{definition} Define the linear transformation
$
\UU: \sV^{\cX}\to \sV^{\cY}
$ by
\[\UU:=z^{-\Gr}\hat{\Gamma}(\cY)(2\pi i)^{\deg_0/2}\overline{\UU} (2\pi i)^{-\deg_0/2}\hat{\Gamma}(\cX)^{-1}z^{\Gr}.\]
\end{definition}

The linear transformation $\UU$ gives the desired identification between $\sL^\cX$ and $\sL^\cY$ from Theorem~\ref{t:CRCsympl}.

\begin{proof}[Proof of Theorem~\ref{t:CRCsympl}]
Lemmas~\ref{l:LX} and~\ref{l:LY} provide the $I$-functions for the respective theories.
It is clear by construction that $\UU(I^{\cX}(t,\bt,z))=\widetilde{{I^{\cY}}(t,\bt,z)}$, where $\widetilde{{I^{\cY}}(t,\bt,z)}$ is the analytic continuation of ${I^{\cY}}(q,\bt,z)$.  One can check that the function  $\UU$ defined above agrees with that given in \cite{CIJ}.  It is proven in \cite{CIJ} that $\UU$ is symplectic and is compatible with a Fourier--Mukai transform.
%
%
\end{proof}

The explicit description of  $\UU$ allows us to immediately deduce the following.

\begin{proposition}\label{p:restrict}The transformation $\UU$ defined above satisfies the conditions of Conjecture~\ref{c:rCTC}.
%
%
%
%
\end{proposition}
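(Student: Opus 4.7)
The plan is to verify both conditions of Conjecture~\ref{c:rCTC} directly from the explicit formula
\[
\UU = z^{-\Gr}\hat{\Gamma}(\cY)(2\pi i)^{\deg_0/2}\overline{\UU}(2\pi i)^{-\deg_0/2}\hat{\Gamma}(\cX)^{-1}z^{\Gr}.
\]

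I will first address condition \eqref{i:c2}, the sector-mixing structure. All of the outer operators $z^{\pm\Gr}$, $(2\pi i)^{\pm\deg_0/2}$, $\hat{\Gamma}(\cY)$, and $\hat{\Gamma}(\cX)^{-1}$ act diagonally with respect to the twisted sector decomposition, so the sector-mixing pattern of $\UU$ is inherited from $\overline{\UU}$. The substitution $y=e^{\lambda+H}\xi^b$, together with $\xi^{bd}=1$, rewrites the coefficient of $\tilde{\ii}_{g\jj^{-b}}$ in $\overline{\UU}(\ii_g)$ as $\tfrac{1}{d}\sum_{k=0}^{d-1}e^{k(\lambda+H)}\xi^{kb}$, which is polynomial in $\lambda$ and $H$ (using nilpotency of $H$). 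For $b\neq 0$ this evaluates to $0$ at $\lambda+H=0$, hence factors as $(\lambda+H)\cdot C_b$ with $C_b\in H^*(\cY)[\lambda]$. The only $\hat{\Gamma}$-factor that depends on $\lambda+H$ is $\Gamma(1-d(H+\lambda))$, which is regular and nonvanishing at $\lambda+H=0$, so the outer dressings preserve this divisibility. Reindexing $b\mapsto d-b$ then yields precisely the form demanded by \eqref{i:c2}.

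Condition \eqref{i:c1} splits into two parts. For the polynomiality in $\lambda$ of the coefficients of $\UU$, the obstruction is the Gamma factors $\Gamma(1-m_j(g)-c_j\lambda)^{-1}$ from $\hat{\Gamma}(\cX)^{-1}$ and $\Gamma(1-m_j(g\jj^{-b})+c_jH)$ from $\hat{\Gamma}(\cY)$. The plan is to pair these via the reflection formula $\Gamma(x)\Gamma(1-x)=\pi/\sin(\pi x)$, and to combine the resulting sines with the naturally occurring sine factor $e^{d(\lambda+H)}-1=2ie^{d(\lambda+H)/2}\sin(-id(\lambda+H)/2)$ in $\overline{\UU}$ (this is exactly the sine prefactor visible in the Mellin--Barnes expression for ${H^{\cY}}'$). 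Carrying out the cancellation sector by sector, and tracking how $z^{\pm\Gr}$ rescales each resulting monomial in $\lambda$ and $H$, produces coefficients in $H^*(\cY)[\lambda, z, z^{-1}]$. Conceptually, polynomiality is also forced by the Fourier--Mukai interpretation of $\UU$ from Theorem~\ref{t:CRCsympl}: the cohomological shadow of a K-theoretic correspondence between equivariant derived categories is automatically polynomial in the equivariant parameter, since Chern characters of coherent sheaves are.

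For the compactly-supported isomorphism statement in \eqref{i:c1}, I will again invoke the Fourier--Mukai realization. A proper Fourier--Mukai kernel induces an isomorphism on compactly-supported equivariant K-theory, and the $\hat{\Gamma}$-integral structure of \cite{Iri} transports this to an isomorphism of compactly-supported Chen--Ruan cohomology in the nonequivariant limit, which limit is well-defined by the polynomiality just established. The main obstacle throughout will be the explicit Gamma-function cancellation needed for the polynomiality claim: organizing the reflection identities across all $N$ coordinates and all twisted sectors, while simultaneously tracking the powers of $z$ introduced by the $z^{\pm\Gr}$ conjugations, requires careful bookkeeping but no essentially new idea.
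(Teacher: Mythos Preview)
Your proposal is correct and follows essentially the same approach as the paper: both arguments reduce condition~\eqref{i:c2} to the explicit formula for $\overline{\UU}$ by noting that the $\hat{\Gamma}$- and $z^{\pm\Gr}$-dressings act diagonally on twisted sectors, then read off the $(\lambda+H)$-divisibility from the geometric-series expansion of $\frac{e^{d(\lambda+H)}-1}{d(e^{(\lambda+H)}\xi^b-1)}$, and both defer the compactly-supported isomorphism statement in condition~\eqref{i:c1} to the Fourier--Mukai realization of $\UU$ from~\cite{CIJ}. Your additional Gamma-reflection sketch for the polynomiality in $\lambda$ is more than the paper attempts---the paper simply asserts that the non-equivariant limit is ``clear'' from the explicit formula---and you should be aware that the reflection pairing you propose does not line up cleanly (the arguments $1-m_j(g)-c_j\lambda$ and $1-m_j(g\jj^{-b})+c_jH$ are not of the form $x$ and $1-x$), so if you pursue that route it will require more than bookkeeping; the Fourier--Mukai argument you also cite is the safer justification.
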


\begin{proof}
%

From the explicit expression for $\overline{\UU}$, it is clear that $\UU$ has a well-defined non-equivariant limit.  That this limit induces an isomorphism on the restriction to compactly supported classes follows from the fact that it is induced by a Fourier--Mukai transformation \cite{CIJ}.  
%
%
%

To check condition \eqref{i:c2} of the conjecture, note that because $z^{-\Gr}\hat{\Gamma}( -) (2\pi i)^{\deg_0}$ acts diagonally on both cohomologies, it is enough to show that the image of $\overline{\UU}$ satisfies condition \eqref{i:c2}.

By the formula \eqref{e:olu} for $\overline{\UU}$, we see that the coefficient of $\tilde \ii_{g \jj^{-b}}$ in $\overline{\UU}(\ii_g)$ is in the $\CC[\lambda]$-span of $(\lambda + H)$ unless $b = 0$.  When $b = 0$ the coefficient may be expanded as
\[\frac{1}{d}\left(e^{(d-1)(\lambda +H)} + e^{(d-2)(\lambda +H)} + \cdots + e^{(\lambda +H)} + 1
\right).\]
This proves the claim.

%

\end{proof}

%
%
%
%
%
%
%
%
%
%
%
%
%
%

\subsection{The LG/CY correspondence}
Let the setup be as in Section~\ref{s:ctclgcy}.
In this section we use Theorem~\ref{t:CRCsympl} and the results of Section~\ref{s:ctclgcy} to prove the following:
\begin{theorem}[The LG/CY correspondence for $(Q, G)$]\label{LGCYsympl}  There exist $I$-functions $I^{(Q, G)}$ and $I^{\cZ}$ lying on $\sL^{(Q, G)}$ and $\sL^{\cZ}$ respectively, and a linear symplectic transformation $\VV: \sV^{(Q, G)} \to \sV^{\cZ}$ which identifies (up to a change of variables) the $I$-function $I^{(Q, G)}$ with the analytic continuation of $I^{\cZ}$.
\end{theorem}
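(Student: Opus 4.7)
The plan is to combine Theorem~\ref{t:CRCsympl} with the reduction machinery of Section~\ref{s:ctclgcy}. By Proposition~\ref{p:restrict}, the explicit transformation $\UU$ of the previous subsection satisfies the hypotheses of the refined crepant transformation conjecture, so one may define
\[
\VV := \left(i^* \circ \Delta^\diamond \circ \UU_c \circ (\Delta^\circ)^{-1}\right)\big|_{\lambda = 0},
\]
which is symplectic by Lemma~\ref{l:5.6}. The remaining work is to produce the $I$-functions on the LG and CY sides and show that $\VV$, modulo the change of variables supplied by Corollary~\ref{c:Jrels} and Theorem~\ref{t:qsd}, identifies them after analytic continuation.

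First I would construct the two $I$-functions. Set
\[
I^{(Q,G)}(\bt, z) := \lim_{\lambda \to 0} \Delta^\circ\!\left(z \tfrac{\partial}{\partial t}\, I^{\cX}(t, \bt, z)\right),
\]
and, after the standard quantum Lefschetz manipulation used in the proof of Theorem~\ref{t:rctcilgcy},
\[
I^{\cZ}(\bt, z) := \lim_{\lambda \to 0} i^* \circ \Delta^\diamond\!\left(z \tfrac{\partial}{\partial t^{e_{\CC^*}(-K)}}\, I^{\cY}(q, \bt, z)\right),
\]
mirroring the constructions there but with the $J$-functions replaced by the explicit small $I$-functions of Section~\ref{s:app}. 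To apply Theorem~\ref{t:general1} and conclude $I^{(Q,G)} \in \sL^{(Q,G)}$, it is enough to verify that the $\ii_g$-coefficient of $z\partial_t I^{\cX}$ contains a factor of $\lambda^{N_g}$ whenever $N_g > 0$. This is visible directly from the explicit modification factor
\[
M(k_0, \bv{k}) = \prod_{j=1}^N \prod_{l=0}^{\lfloor k_0 c_j/d + a(\bv{k})^j \rfloor - 1}\bigl(-c_j\lambda - (\langle k_0 c_j/d + a(\bv{k})^j\rangle + l)z\bigr),
\]
since for every index $j$ with $m_j(\jj^{k_0}\prod_s g_s^{k_s}) = 0$ the inner product over $l$ contains the term $l = 0$ with vanishing fractional part, producing a factor of $-c_j\lambda$. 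The analogous argument together with Theorem~\ref{t:general2} shows $I^{\cZ} \in \sL^{\cZ}$.

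Second, I would invoke Theorem~\ref{t:CRCsympl}, which gives $\UU(I^{\cX}) = \widetilde{I^{\cY}}$, and apply the diagram-chase from Theorem~\ref{t:rctcilgcy} to the square
\[
\begin{tikzcd}
\sL^{\cX} \arrow{r}{\UU} \arrow{d}{\lim_{\lambda \mapsto 0}\Delta^\circ} & \widetilde{\sL^{\cY}} \arrow{d}{\lim_{\lambda \mapsto 0} i^* \circ \Delta^\diamond} \\
\sL^{(Q,G)} \arrow[dashed]{r}{\VV} & \widetilde{\sL^{\cZ}}
\end{tikzcd}
\]
evaluated at $z\partial_t I^{\cX}$. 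Commutativity at that particular input (which holds by the proof of Lemma~\ref{l:good functions}) then yields $\VV(I^{(Q,G)}) = \widetilde{I^{\cZ}}$ after the indicated change of variables.

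The main obstacle is the divisibility bookkeeping: one must verify that the $\lambda^{N_g}$-divisibility in the coefficients of $z\partial_t I^{\cX}$ propagates through $\UU$ so that the result is divisible by $e_{\CC^*}(-K) = d(\lambda + H)$, ensuring that $\Delta^\diamond$---which divides by $e_{\CC^*}(-K)$---produces a well-defined non-equivariant limit. This is a finite combinatorial check using condition~\eqref{i:c2} of Conjecture~\ref{c:rCTC} together with the explicit description \eqref{e:olu} of $\overline{\UU}$; once in place, the remainder of the argument is Lemmas~\ref{l:5.6} and~\ref{l:good functions} applied verbatim to $I$-functions rather than to $J$-functions.
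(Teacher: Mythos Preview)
Your proposal is correct and follows essentially the same route as the paper: verify via Proposition~\ref{p:restrict} that $\UU$ satisfies the refined hypotheses of Conjecture~\ref{c:rCTC}, take $\VV$ as in Lemma~\ref{l:5.6}, check the $\lambda$-divisibility in the modification factor $M(k_0,\bv{k})$ so that $I^{\cX}$ meets the hypothesis of Lemma~\ref{l:good functions}, and then invoke that lemma. The only cosmetic difference is that the paper defines $\widetilde{I^{\cZ}}$ directly as the image $\VV(I^{(Q,G)})$ and appeals to Lemma~\ref{l:good functions} for the conclusion, whereas you construct $I^{\cZ}$ independently from $I^{\cY}$ via Theorem~\ref{t:general2} and then match the two through the commutative square; both arrive at the same place.
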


\begin{proof}
The proof amounts to checking that the conditions of Conjecture~\ref{c:rCTC} are satisfied 
by our symplectic transformation $\UU$.  This follows immediately from Proposition~\ref{p:restrict}.  Therefore the transformation $\VV$ as defined in Lemma~\ref{l:5.6} is symplectic.

Consider the function $I^\cX(t, \bt, z)$ from~\eqref{e:IX}.  
Note that in the formula for $I^\cX(t, \bt, z)$, the modification factor $M(k_0, \bv{k})$ contains a factor of $-c_j \lambda$ whenever $k_0 > 0$ and 
$\jj^{k_0}\prod_s g_s^{k_s}$ fixes the $j$th coordinate.  Thus, for $\ii_g$ supported on a non-compact set (i.e. for $g$ such that $N_g > 0$), the coefficient of $\ii_g$ in $ \frac{\partial}{\partial t} \left( I^{\cX}(t, \bt, z) \right)$ is divisible by $\lambda$ and therefore vanishes in the non-equivariant limit.  Therefore $I^\cX(t, \bt, z)$ satisfies the assumptions of Lemma~\ref{l:good functions}.  Defining $I^{(Q,G)}$ as in Lemma~\ref{l:good functions}, we conclude that $\VV$ maps $I^{(Q,G)}$ to a function $\widetilde{I^\cZ}$ lying in the analytic continuation of $\sL^\cZ$.
\end{proof}

\bibliographystyle{plain}
\bibliography{references}

\end{document}